\pgfplotsset{compat=newest}
\newcounter{results}[section]
\newtheorem{theorem}[results]{Theorem}
\newtheorem{definition}[results]{Definition}
\newtheorem{corollary}[results]{Corollary}
\newtheorem{lemma}[results]{Lemma}
\newtheorem{proposition}[results]{Proposition}
\newtheorem{assumption}[results]{Assumption}
\title[Towards Stratified Space Learning: Linearly Embedded Graphs]{Towards Stratified Space Learning: Linearly Embedded Graphs}
\author[Yossi Bokor and Katharine Turner and Christopher Williams]{}
 \keywords{Stratified Space Learning \and Reconstruction and Modelling \and Embedded Spaces \and Implementation}
 \email{yossi.bokor@anu.edu.au}
 \email{katharine.turner@anu.edu.au}
 \email{christopher.williams@anu.edu.au}
\thanks{$^*$ Corresponding author: Yossi Bokor}
\begin{document}
\maketitle

\centerline{\scshape Yossi Bokor$^*$}
\medskip
{\footnotesize
 \centerline{Mathematical Sciences Institute \hspace{0.5cm} School of Mathematics and Statistics} 
   \centerline{Australian National University \,\hspace{1.5cm} The University of Sydney}
   \centerline{Acton, ACT, 2601, Australia \hspace{0.75cm}  Camperdown, NSW, 2006, Australia}
} 
   
   \medskip

\centerline{\scshape Katharine Turner}
\medskip
{\footnotesize
\centerline{Mathematical Sciences Institute}
   \centerline{Australian National University}
   \centerline{Acton, ACT, 2601, Australia}
   }
   
\medskip

\centerline{\scshape Christopher Williams}
\medskip
{\footnotesize
\centerline{Mathematical Sciences Institute}
   \centerline{Australian National University}
   \centerline{Acton, ACT, 2601, Australia}
   }

\bigskip

\begin{abstract}
    In this paper, we consider the simplest class of stratified spaces -- linearly embedded graphs. We present an algorithm that learns the abstract structure of an embedded graph and models the specific embedding from a point cloud sampled from it. We use tools and inspiration from computational geometry, algebraic topology, and topological data analysis and prove the correctness of the identified abstract structure under assumptions on the embedding.  The algorithm is implemented in the  Julia package \href{http://github.com/yossibokor/Skyler.jl}{Skyler}, which we used for the numerical simulations in this paper.
\end{abstract}

\section{Introduction}\label{Sec:intro}
	Increases in the quantity and complexity of collectable data have lead to the search for new methods for efficiently discovering and modelling their underlying structures. The importance of triage and dimensionality reduction of large amounts of data grows with the embedding dimension. By expanding the class of underlying structures which can be detected and modelled, we aim to address some of the difficulties. To improve dimensionality reduction's efficiency and accuracy, we remove the manifold assumption where the dimension is constant and instead treat it as a stratified space, learning the local dimension in the algorithm. We focus on one-dimensional stratified spaces (i.e. graphs) and here provide a new method for dimensionality reduction and compression. 

	Manifold learning is a method of detecting and modelling structures underlying data sets. There are numerous algorithms and theorems for learning geometric and topological features of manifolds from (noisy) samples, such as dimension or the manifold itself (see \cite{deymanifoldreconstruction}, \cite{deycurveandsurface}, \cite{deydimensiondetection}). These algorithms make assumptions about the manifold and the sampling procedure, often in the form of curvature restrictions and conditions on the sample's density and noise. Unfortunately, these assumptions are not satisfied by point clouds arising in many applications, such as geospatial transportation network data of vehicle movement. We move towards resolving this problem by expanding the set of allowable underlying structures to include stratified spaces. A \emph{stratified space} is a space described by gluing together (manifold) pieces, called strata. There are no restrictions placed upon each stratum's dimension, and the gluing can give rise to a variety of interesting and complex local structures. 
	
	Bendich et al. (\cite{stratlearning}, and \cite{localhomology}) describe an algorithm which, under certain conditions, can identify if two points have been sampled from the same stratum of a stratified space. This algorithm does not provide a method for learning the global abstract structure. In related work, Nanda et al. (\cite{nandaintersection}) present an algorithm for detecting when points have been sampled from two intersecting manifolds which is a cruder splitting than the splitting into stratified subspaces. They have some experimental verification but no theoretical guarantees.
	
	The closest previous work to this paper is  \cite{metricgraph}, in which Aanjaneya et al. consider reconstructing \emph{metric graphs} to detect branch points and the graph structure. There are a few crucial differences. They focus in on the reconstruction of the metric, with input intrinsic distances on the metric graph (plus noise) and the aim to reconstruct a metric graph that is homeomorphic and close as metrics. This means that the theoretical guarantees are about the lengths of edges in the metric graph instead of geometric conditions on an embedding. Crucially, they do not need to consider vertices of degree $2$ as in a metric space setting these are points on an edge.
	
	In contrast, this paper describes an algorithm for modelling a linear embedding of a simple graph from a point cloud sample and provide theoretical guarantees in terms of the geometric embedding that the graph structure modelled is equivalent to the structure embedded. 
	
	\begin{definition}[Graph]
		A \emph{graph} $G$ consists of 
		\begin{enumerate}
			\item a set of vertices $V= \{ v_i\}_{i=1}^{n_v}$,
			\item a set of edges $E= \{ (v_{j_{1}}, v_{j_{2}})\}_{j=1}^{n_e}$. 
		\end{enumerate}
		
		For any graph $G$, the \emph{boundary operator} $\partial_G: E \to V \times V$, maps an edge to the two boundary vertices. We can represent $\partial_G$ via the \emph{boundary matrix} $B$, which is the $n_v \times n_e$ matrix with $B[i,j]=1$ if $v_i = v_{j_{1}}$ or $v_i = v_{j_{2}}$. Edges $(v_{j_{1}}, v_{j_{2}})$ are \emph{open}, and their boundary consists of the two vertices.
		\end{definition}

	Given a graph $G$, we can embed it into $\mathbb{R}^n$ in numerous ways. We will restrict to linear embeddings, such that at degree 2 vertices, the angle between edges is not $\pi$.

\begin{definition}[Linear embedding]
		A linearly embedded graph 
		
		\begin{equation*}
		    |G| = (G, \phi_G) \subset \mathbb{R}^n
		\end{equation*} 
		
		is a graph $G$, and a map $\phi_G:  G \to \mathbb{R}^n$, such that 
		\begin{enumerate}
	    	\item on the vertex set $V$, $\phi_G$ is injective, and we denote $\phi_G(v)$ by $v$,
	    	\item on $E$, $\phi_G$ is defined by linear interpolation: the embedding of an edge $(u,v)$ is the line segment joining $\phi_G(u)$ and $\phi_G(v)$, denoted $\overline{\phi_G(u)\phi_G(v)}= \overline{uv}$,
	    	\item embedded edges $\overline{uv}, \overline{u'v'}$ only intersect if they share a boundary vertex, say $v'=v$, and their intersection is $\phi_G(v)$. 
		\end{enumerate}
		We restrict our attention to embedded graphs $|G|$ such that at a degree two vertex $v$, the embedded edges, say $\overline{uv}, \, \overline{wv}$ form an angle $\alpha \neq \pi$.
	\end{definition}

   
   Please note that with an abuse of notation we will usually use $v$ to denote both the abstract vertex and the embedded location $\phi_G(v)$, and use $\overline{uv}$ to denote both the abstract edge and the embedded image of that edge by $\phi_G$. It should always be clear from context whether we are referring to an element in the abstract structure or to its image in $\mathbb{R}^n$.
   
    Throughout this paper, we use the following conventions. For two points $x, y \in \mathbb{R}^n$, $\| x-y \|$ is the distance between $x$ and $y$ in the standard Euclidean metric on $\mathbb{R}^n$, $\langle x, y \rangle$ is the inner product of $x$ and $y$. For a point $x \in \mathbb{R}^n$ and a set $Y \subset \mathbb{R}^n$, we set 
    
    \begin{equation*}
        d(x, Y) := \min_{y \in Y}\| x -y \|,
    \end{equation*} 
    
    and for two sets $X, Y \subset \mathbb{R}^n$, 
    
    \begin{equation*}
        d(X,Y) := \min_{x \in X, y \in Y}\| x-y\|.
    \end{equation*}

	Given a point $x \in |G|$, we can determine if $x$ is on an edge, or is a vertex by considering the intersection of $|G|$ with a small ball around $x$. Consider $B_r(x)$ for small $r >0$. If $x$ is a vertex, $r$ is less than $\|x-w\|$ for all vertices $w \neq x$ and there are no edges $\overline{uw}$ within $r$ of $x$, then $B_r(x) \cap |G|$ is connected, and for each edge containing $x$, there is a unique point in $\partial B_r(x)$.  If $x$ is a degree $2$ vertex, let the two points on $\partial B_r(x)$ be $p$ and $q$, then $\angle pxq < \pi$. Now consider $x \in \overline{uv}$ for some embedded edge $\overline{uv}$, and take $r < \text{min} \left\{\|x-v\|, \|x-u\| \right \}$. If there is some edge $\overline{wz}$ with $d(x, \overline{wz}) \leq r$, then $B_r(x) \cap |G|$ is disconnected. Otherwise, $B_r(x) \cap |G|$ is connected, and there are two points $q,p$ in $\partial B_r(x) \cap |G|$, and $\angle pxw = \pi$. This is an adaption of the local homology of $|G|$ at $x$.

	We suppose that we do not have the entire embedded graph $|G|$, but only a finite sample $P$. Furthermore, we expect noise so that $P \subsetneq |G|$, and we can only make statements about the distance between $P$ and $|G|$. We restrict to sufficiently dense samples $P$ of $|G|$ with bounded noise. Let $d_H(X,Y)$ be the Hausdorff distance between two subsets $X,Y$ of $\mathbb{R}^n$. We consider \emph{$\varepsilon$-samples} of embedded graphs $|G|$.
	
	\begin{definition}[$\varepsilon$-sample]
		Let $|G| \subset \mathbb{R}^n$ be an embedded graph. An \emph{$\varepsilon$-sample $P$} of $|G|$ is a finite subset of $\mathbb{R}^n$ such that $d_H(|G|, P) \leq \varepsilon$.
	\end{definition}  
	
	We can now state the aim of this paper: given an $\varepsilon$-sample $P$ of a linearly embedded graph $|G|$, we want to 1) detect the graph structure $G$, and then 2) model $\phi_G$. This is a semi-parametric problem: the parameters we need to learn are the number of vertices, the number of edges, and the boundary operator. To do so, we we need to decide if $p$ is near a vertex $v$ or far away from all vertices for each $p \in P$. This partitions our sample $P$ into two subsets, which intuitively are $P_0$ containing samples $p$ which are near a vertex, and $P_1$ containing samples $p$ which are not near any vertex. We define $P_0$ and $P_1$ rigorously in Definition \ref{defn:p0p1}. In the process of partitioning $P$, we approximate the previous modification of local homology for each $p \in P$. This requires choosing a scale for our local neighbourhoods, and a scale for approximating $|G|$ from $P$. The clusters in $P_0$ and $P_1$ correspond to vertices and edges in $G$ respectively, and we can use the minimal distance between clusters in $P_1$ and $P_0$ to learn the boundary operator. Using this information, we model the embedding $\phi_G$.
	
	The previous modification of local homology first used the connectedness of $B_r(x) \cap |G|$ to determine if $x$ was not a vertex, and then counted the points in $\partial B_r(x) \cap |G|$ and used their relative geometry to decide if $x$ was a vertex.  Given a a point $p$ near $|G|$, we can consider $B_r(p) \cap |G|$ and $\partial B_r(x) \cap |G|$ to determine if $p$ is near a vertex or not. As $p$ is within $\varepsilon$ of $|G|$, $r$ must be greater than $\varepsilon$ to ensure $B_r(x) \cap |G|$ is non-empty.
	
	Fix $R >\varepsilon$. We first want to approximate $B_R(x) \cap |G|$, and then $\partial B_R(x) \cap |G|$ from $P$. We can approximate $B_R(p) \cap |G|$ by considering samples $q \in P$ with $\|p-q\| \leq R$. As $P$ is an $\varepsilon$-sample of $|G|$, we can approximate $\partial B_R(p) \cap |G|$ by considering the samples in a spherical shell $S_{R-\varepsilon}^{R+\varepsilon}(p)$ of inner radius $R- \varepsilon$, outer radius $R+\varepsilon$ around $p$. 

	
	We model $\phi_G$ by aiming to reconstruct a probability measure $\nu$ which is supported on $|G| \subset \mathbb{R}^n$.
	As recorded data has errors, we cannot directly reconstruct $\nu$, but instead construct an approximating measure $\nu_{\delta}$ such that $\nu_{\delta}$ is equivalent to the Lebesgue measure, and $\text{supp}(\text{lim}_{\delta\rightarrow 0}\nu_{\delta}) = |G|$.
	We form $\nu_{\delta}$ from a categorical mixture model of measures over the individual strata pieces, with latent variables for strata assignment. 
	We use a Gaussian convolution for each individual strata piece to form our approximation of $\nu$ with $\nu_\delta$.
	We derive a log-likelihood function which is maximised through an Expectation-Maximisation algorithm (Algorothm \ref{alg:vertex}). 
	
	In Section \ref{sec:geolems}, we present and prove some geometric lemmas used throughout Sections \ref{sec:localstruct} and \ref{sec:alg}, then in Section \ref{sec:localstruct} we define $(R,\varepsilon)$-local structure, describe the $(R,\varepsilon)$-local structure of a vertex and of an edge, before providing conditions under which we can guarantee what $(R,\varepsilon)$-local structure a sample $p$ has. Section \ref{sec:alg} presents the algorithm, relates it to the $(R,\varepsilon)$-local structure, before proving that the abstract graph identified is equivalent to the original one. Finally, Section \ref{sec:modelling} describes the modelling process used, and contains some simulations. 

    
\section{Some Geometric Lemmas}\label{sec:geolems}
    As motivation for the formulas both in the definitions of local structure and the geometric assumptions of the graphs' embedding, we first prove some geometric lemmas. Throughout our process, we consider $\langle x_1-p, x_2-p\rangle$ for $p,x_1,x_2$ samples, and $\|p - x_1\|, \|p-x_2\| \in [R-\varepsilon, R+ \varepsilon]$. In particular, if there are two clusters of points in the spherical shell around a sample $p$, all points (including $p$) are within $\varepsilon$ of an edge $\overline{uv}$, and $x_1$ and $x_2$ are from different clusters, we wish to bound $\langle x_1-p, x_2-p\rangle$ from above. 

	\begin{lemma}\label{lem:getedges}
	    Fix $R > 12 \varepsilon >0$ and consider a sample $p$ within $\varepsilon$ of an edge $\overline{uv}$. Let $H$ be the hyper-plane through $p$ perpendicular to $\overline{uv}$. Now take $ x_1, x_2$ within $\varepsilon$ of edge $\overline{uv}$ such that $\|p-x_1\|, \|p-x_2\|\in [R-\varepsilon, R+\varepsilon]$ and $x_1, x_2$ are on different sides of $H$. Then
	    
	    \begin{equation*}
	           \langle x_1-p, x_2-p\rangle \leq -R^2+2R\varepsilon+7\varepsilon^2.
        \end{equation*}
	\end{lemma}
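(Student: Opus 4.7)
\medskip

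\noindent\textbf{Proof proposal.} The plan is to decompose $x_1-p$ and $x_2-p$ into components parallel and perpendicular to the edge direction, then exploit the fact that $x_1$ and $x_2$ lie on opposite sides of $H$ to force the parallel components to have opposite signs.

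First, I would set up coordinates by letting $\hat{e}$ be the unit vector along $\overline{uv}$, so that $H = \{y : \langle y - p, \hat{e}\rangle = 0\}$. For any point $y \in \mathbb{R}^n$, write $y = y_\parallel \hat{e} + y_\perp$ where $y_\perp$ is orthogonal to $\hat{e}$. Since $p, x_1, x_2$ all lie within $\varepsilon$ of the segment $\overline{uv}$, the perpendicular distance of each of these points from the line through $\overline{uv}$ is at most $\varepsilon$. Writing
\begin{equation*}
    x_i - p = a_i \hat{e} + b_i \quad \text{with } \langle b_i, \hat{e}\rangle = 0,
\end{equation*}
the triangle inequality applied to the perpendicular components gives $\|b_i\| \leq 2\varepsilon$ for $i = 1, 2$.

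Next, I would use the radial constraint $\|x_i - p\| \in [R-\varepsilon, R+\varepsilon]$ to bound $|a_i|$ from below. Since $\|x_i - p\|^2 = a_i^2 + \|b_i\|^2$, we obtain
\begin{equation*}
    a_i^2 \geq (R-\varepsilon)^2 - \|b_i\|^2 \geq (R-\varepsilon)^2 - 4\varepsilon^2,
\end{equation*}
which is positive by the hypothesis $R > 12\varepsilon$ (indeed $R > 3\varepsilon$ suffices here). Because $x_1$ and $x_2$ lie on opposite sides of $H$, the numbers $a_1 = \langle x_1 - p, \hat{e}\rangle$ and $a_2 = \langle x_2 - p, \hat{e}\rangle$ have opposite signs, hence
\begin{equation*}
    a_1 a_2 \leq -|a_1||a_2| \leq -\bigl((R-\varepsilon)^2 - 4\varepsilon^2\bigr).
\end{equation*}

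Finally, I would combine this with Cauchy--Schwarz on the perpendicular parts. Using orthogonality of $\hat{e}$ and $b_i$,
\begin{equation*}
    \langle x_1 - p, x_2 - p\rangle = a_1 a_2 + \langle b_1, b_2\rangle \leq -(R-\varepsilon)^2 + 4\varepsilon^2 + \|b_1\|\|b_2\| \leq -(R-\varepsilon)^2 + 8\varepsilon^2.
\end{equation*}
Expanding $-(R-\varepsilon)^2 + 8\varepsilon^2 = -R^2 + 2R\varepsilon + 7\varepsilon^2$ yields the claimed bound. The main (minor) technical point is justifying that the parallel components are truly of opposite sign rather than just non-coincident in sign at zero; this is immediate from the fact that $x_1, x_2$ are on \emph{different} (open) sides of $H$, so $a_1 a_2 < 0$ strictly. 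Otherwise the argument is a straightforward bookkeeping of parallel versus perpendicular error budgets.
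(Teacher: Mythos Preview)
Your proof is correct. It reaches the same bound as the paper but by a slightly different route, so it is worth a brief comparison.

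The paper argues via the angle $\angle x_1 p x_2$: since each of $x_1-p$ and $x_2-p$ deviates from the direction of $\overline{uv}$ by at most $\arccos\bigl(\tfrac{2\varepsilon}{R-\varepsilon}\bigr)$ (coming from the same ``perpendicular offset $\le 2\varepsilon$, length $\ge R-\varepsilon$'' estimate you use), and the two vectors point to opposite sides of $H$, one gets $\angle x_1 p x_2 \in [\pi - 2\arccos(\tfrac{2\varepsilon}{R-\varepsilon}),\pi]$. The bound then follows from $\|x_1-p\|\|x_2-p\|\cos\theta \le (R-\varepsilon)^2\bigl(2\tfrac{(2\varepsilon)^2}{(R-\varepsilon)^2}-1\bigr)$ via the double-angle identity $\cos(\pi-2\arccos t)=2t^2-1$.

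Your orthogonal decomposition unpacks exactly the same geometry without the trigonometric detour: you isolate the parallel contribution $a_1a_2 \le -\bigl((R-\varepsilon)^2 - 4\varepsilon^2\bigr)$ and the perpendicular contribution $\langle b_1,b_2\rangle \le 4\varepsilon^2$ separately, and add. The two arguments are equivalent in content---both ultimately rest on ``perpendicular error $\le 2\varepsilon$'' and ``radial distance $\ge R-\varepsilon$''---but yours is more elementary in that it avoids the angle bookkeeping and the $\cos(\pi-2\arccos t)$ identity, at the cost of introducing a coordinate frame. Either presentation is fine here.
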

	
	\begin{proof}
	    By assumption $\|x_1-p\|, \|x_2-p\|\geq R-\epsilon$. As $x_1, p, x_2$ are all within $\varepsilon$ of $\overline{uv}$ we know that $\angle(x_1 p x_2)\in [\pi-2\arccos(\frac{2\epsilon}{R-\epsilon}), \pi]$. Together we can bound 
	
	    \begin{align*}
	        \langle x_1-p, x_2-p\rangle&=\|x_1-p\|\|x_2-p\|\cos \angle(x_1 p x_2)\\
	        &\leq (R-\epsilon)^2\cos \left(\pi-2\arccos\left(\frac{2\epsilon}{R-\epsilon}\right)\right)\\
	        &\leq (R-\epsilon)^2\left(2\frac{(2\epsilon)^2}{(R-\epsilon)^2}-1\right)\\
	        &\leq -R^2+2R\varepsilon+7\varepsilon^2.
	    \end{align*}
	\end{proof}
  	
  	We want to distinguish points very close to a vertex of degree $2$ as close to a vertex, from points on an edge. This requires an upper bound on the angle at any vertex of degree $2$ within our geometric assumptions due to the noise in sampling. The following geometric lemma motivates the upper bound given in the next section.  
  		
	\begin{lemma}\label{lem:anglebound}
		 Fix $R \geq 12 \varepsilon >0$. Take $u,v,w \in \mathbb{R}^n$, and consider the line segments $\overline{uv}, \overline{wv}$. 
		
		Let $p,x_1, x_2 \in \mathbb{R}^n$ be such that $p$ and $x_1$ are within $\varepsilon$ of $\overline{vw}$, $x_2$ is within $\varepsilon$ of $\overline{uv}$, and $\|x_1-p\|, \|x_2-p\|\in [R-\varepsilon, R+\varepsilon]$.

		If either
		\begin{enumerate}
		    \item $\|p-v\|<4\varepsilon$ and 	
		    \begin{equation*}
		        \pi/2< \angle uvw < \pi-\arctan\left(\frac{R+3\varepsilon}{6\varepsilon}\right) +\arcsin\left(\frac{R^2-4R\varepsilon-9\varepsilon^2}{(R+\varepsilon)\sqrt{R^2+6R\varepsilon+34\varepsilon^2}}\right),
		    \end{equation*}
		    
		    OR
		    \vspace{0.3cm}
		    \item $\|p-v\|<(R-\varepsilon)/2$ and $\angle uvw\leq \pi/2$
		\end{enumerate}
		
	then	
		\begin{equation*}
		    \langle x_1-p, x_2-p\rangle > -R^2 +2R\varepsilon+7\varepsilon^2.
		\end{equation*}
    \end{lemma}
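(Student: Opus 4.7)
The strategy is to upper-bound $\angle(x_1 p x_2)$ strictly below the angle threshold appearing in the proof of Lemma~\ref{lem:getedges} (which, via the cosine identity $\cos(\pi - 2\arcsin(2\varepsilon/(R-\varepsilon))) = 8\varepsilon^2/(R-\varepsilon)^2 - 1$, is exactly what produces the value $-R^2 + 2R\varepsilon + 7\varepsilon^2$). Reversing the cosine computation from that proof then immediately yields $\langle x_1 - p, x_2 - p\rangle > -R^2 + 2R\varepsilon + 7\varepsilon^2$.

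First I would set up planar coordinates in the affine span of $u, v, w$: place $v$ at the origin, $\overline{vw}$ along the positive $x$-axis, and $\overline{uv}$ making angle $\alpha := \angle uvw$ with $\overline{vw}$. Let $\tilde p, \tilde x_1 \in \overline{vw}$ and $\tilde x_2 \in \overline{uv}$ be the orthogonal projections, and set $e_p := p - \tilde p$, $e_1 := x_1 - \tilde x_1$, $e_2 := x_2 - \tilde x_2$, each of norm at most $\varepsilon$. Writing $x_i - p = (\tilde x_i - \tilde p) + (e_i - e_p)$ decomposes each vector into an ``ideal'' part along the relevant edge plus a perturbation of norm at most $2\varepsilon$; the triangle inequality then forces $\|\tilde x_i - \tilde p\| \in [R - 3\varepsilon, R + 3\varepsilon]$.

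In Case (2), $\|p - v\| < (R-\varepsilon)/2$ gives $s := \|\tilde p - v\| \leq (R+\varepsilon)/2$, while $\|x_2 - p\| \geq R-\varepsilon$ forces $\tilde x_2$ to lie at distance $\geq (R - 3\varepsilon)/2$ from $v$ along $\overline{uv}$. Combined with $\alpha \leq \pi/2$ (so $\cos\alpha \geq 0$) and $\|\tilde x_2 - \tilde p\| \leq R + 3\varepsilon$, this yields an explicit lower bound on $r\sin\alpha$, the $y$-coordinate of $\tilde x_2 - \tilde p$; hence the angle between $\tilde x_1 - \tilde p$ (along the $x$-axis) and $\tilde x_2 - \tilde p$ stays bounded away from $\pi$, and one then checks the bound survives the $2\varepsilon$-perturbation. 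Case (1) is similar in spirit but tighter: $\|p - v\| < 4\varepsilon$ puts $\tilde p$ within $5\varepsilon$ of $v$, so the angle between the ideal vectors is essentially $\alpha$ itself up to small corrections, and the displayed bound on $\alpha$ is precisely the threshold that persists after absorbing the worst-case perturbations of $e_p, e_1, e_2$.

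The main obstacle is the extremal geometry in Case (1): the combined $\arctan$ and $\arcsin$ terms reflect, respectively, the worst-case tilt of $\tilde x_2 - \tilde p$ relative to the $\overline{vw}$ direction induced by the $\varepsilon$-offsets $e_p, e_2$, and the perpendicular displacement budget within the spherical annulus of radii $R - \varepsilon$ and $R + \varepsilon$ around $p$. Identifying which perturbation configuration saturates each factor—rather than settling for a looser bound obtained by naively trading off the $\varepsilon$-budget—is the delicate part, and it is what forces the exact form of the stated inequality rather than a more symmetric expression.
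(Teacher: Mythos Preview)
Your high-level plan contains a genuine gap at the very first step. You propose to bound $\angle(x_1 p x_2)$ strictly below the threshold angle from Lemma~\ref{lem:getedges} and then ``reverse the cosine computation'' to obtain the desired lower bound on the inner product. But reversing does not work: in Lemma~\ref{lem:getedges} the upper bound uses $\|x_i-p\|\geq R-\varepsilon$ together with a \emph{negative} cosine, so the product is bounded \emph{above} by $(R-\varepsilon)^2\cos\theta$. To bound the inner product \emph{below} when $\cos\theta<0$ you must instead use $\|x_i-p\|\leq R+\varepsilon$, giving $\langle x_1-p,x_2-p\rangle\geq (R+\varepsilon)^2\cos\theta$. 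Thus merely beating the Lemma~\ref{lem:getedges} threshold is insufficient; you would need the sharper bound $\cos\angle(x_1 p x_2)>\bigl(8\varepsilon^2-(R-\varepsilon)^2\bigr)/(R+\varepsilon)^2$, and nothing in your sketch explains why the hypothesis on $\angle uvw$ delivers exactly this much margin.

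The paper sidesteps this norm-direction problem by never passing through the angle $\angle(x_1 p x_2)$ at all. It expands $\langle x_1-p,x_2-p\rangle$ directly via your same decomposition $x_i-p=(\tilde x_i-\tilde p)+(e_i-e_p)$, but then exploits the orthogonality $e_p\perp(\tilde x_1-\tilde p)$ to kill one cross term, bounds $\langle\tilde x_1-\tilde p,e_2\rangle$ using that only the in-plane component of $e_2$ contributes, and handles $\langle\tilde x_1-\tilde p,\tilde x_2-\tilde p\rangle$ via the sine rule in triangle $v\tilde p\tilde x_2$. The displayed upper bound on $\angle uvw$ in scenario~1 is precisely the value that makes this algebraic chain close to $-R^2+2R\varepsilon+7\varepsilon^2$; it was engineered for the inner-product computation, not for an angle bound on $\angle(x_1 p x_2)$, so there is no a~priori reason it would coincide with the threshold your route requires. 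For scenario~2 the paper's argument is also more direct than yours: rather than chasing a lower bound on $r\sin\alpha$, it observes that the signed projection of $\tilde x_2-\tilde p$ onto $\tilde x_1-\tilde p$ is at most $\|v-\tilde p\|<(R-\varepsilon)/2$ in magnitude, which immediately caps the negative contribution.
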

	
	\begin{proof}
		Let $\widetilde{p}, \widetilde{x_1}, \widetilde{x_2}$ be the projections of $p, x_1, x_2$ to $\overline{uv}\cup \overline{wv}$. Without loss of generality, we assume $\widetilde{p}, \widetilde{x_1} \in \overline{wv} \cup v$, and $\widetilde{x_2} \in \overline{uv}$. Then there are $e_p, e_1, e_2 \in \mathbb{R}^n$ with $\|e_q\|, \|e_1\|, \|e_2\| \leq \varepsilon$ and 
		\begin{align*}
			p   &= \widetilde{p}+ e_p\\
			x_1 &= \widetilde{x_1}+e_1\\
			x_2 &= \widetilde{x_2}+e_2.
		\end{align*} 
    
        Now consider the vectors $x_1 -p$ and $x_2-p$, we have:
		\begin{equation}\label{eq:innerprod}
			\langle x_1 -p, x_2 -p \rangle
			= \langle \widetilde{x_1} -\widetilde{p},   \widetilde{x_2} -\widetilde{p}\rangle + \langle \widetilde{x_1} -\widetilde{p}, e_2 \rangle -\langle \widetilde{x_1} -\widetilde{p}, e_p \rangle +\langle e_1 - e_p, x_2 -p\rangle
		\end{equation}
		
	    We know that $e_p$ is perpendicular to $\overline{vw}$ and thus it is also perpendicular to $\widetilde{x}_1-\widetilde{p}$ implying $\langle \widetilde{x_1} -\widetilde{p}, e_p \rangle=0$. 
	
	    We also know that $\|\widetilde{x}_1-\widetilde{p}\|\leq \|x_1-p\|\leq R+\varepsilon$ as distances can only decrease when projecting onto $\overline{vw}$.
		
	    To bound $\langle \widetilde{x_1} -\widetilde{p}, e_2 \rangle$ we first split $e_2=e_2' +e_2''$ where $e_2'$ is the projection of $e_2$ into the plane spanned by $\overline{vu}$ and $\overline{vw}$. Note that $e_2''$ is perpendicular to $\widetilde{x_1} -\widetilde{p}$ and hence $\langle \widetilde{x_1} -\widetilde{p}, e_2 \rangle=\langle \widetilde{x_1} -\widetilde{p}, e_2' \rangle$.
		
        From here, we need to split the proof into the two scenarios.
	
        Assume we are in scenario 1: $\|p-v\|<4\varepsilon$ and
    
        \begin{equation*}
            \pi/2< \angle uvw < \pi-\arctan\left(\frac{R+3\varepsilon}{6\varepsilon}\right) +\arcsin\left(\frac{R^2-4R\varepsilon-9\varepsilon^2}{(R+\varepsilon)\sqrt{R^2+6R\varepsilon+34\varepsilon^2}}\right).
        \end{equation*}
        
        The angle between $e_2'$ and $\widetilde{x_1} -\widetilde{p}$ is either $\angle uvw+\pi/2$ or $\angle uvw -\pi/2$. Recall that we assumed $\angle uvw\in(\pi/2,\pi)$, so $\cos(\angle uvw-\pi/2)>0>\cos(\angle uvw+\pi/2)$ and
		\begin{equation}\label{eq:e2'}
		   \langle \widetilde{x_1} -\widetilde{p}, e_2 \rangle
		   = \langle \widetilde{x_1} -\widetilde{p}, e_2' \rangle
		   \geq \|\widetilde{x_1}-\widetilde{p}\|\|e_2'\|\cos(\angle uvw+\pi/2)
		   \geq -\varepsilon(R+\varepsilon)\sin \angle uvw.
		\end{equation}
		
		Combining \eqref{eq:innerprod} and \eqref{eq:e2'} we see
		\begin{equation}\label{eq:innerprod2}
		    \langle x_1 -p, x_2 -p \rangle
			\geq \langle \widetilde{x_1} -\widetilde{p}, \widetilde{x_2} -\widetilde{p}\rangle -\sin \angle uvw (R+\varepsilon)\varepsilon -(R+\varepsilon)2\varepsilon.
		\end{equation}

	    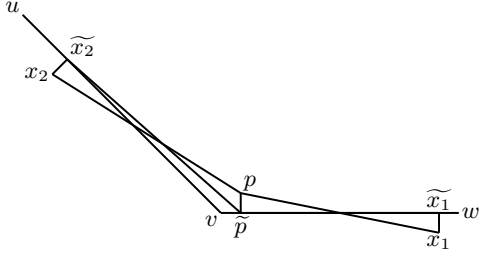
\begin{figure}[H]
		    \centering
		    \tikzset{every picture/.style={line width=0.75pt}} 

		    \begin{tikzpicture}[x=0.75pt,y=0.75pt,yscale=-1,xscale=1]
			    \draw (0,0) -- (120,0);
			    \draw (0,0) -- (-100,-100);
			    \draw (120,0) node [font=\small][anchor=west] [inner sep =0.75pt] {$w$};
			    \draw (-100,-100) node [font=\small][anchor=south east] [inner sep =0.75pt] {$u$};
			    \draw (0,0) node [font=\small][anchor=north east] [inner sep =0.75pt] {$v$};
			    \draw (10,-10) -- (-85,-70);
			    \draw (10,-10) -- (110,10);
			    \draw (10,-10) node [font=\small][anchor=south west] [inner sep =0.75pt] {$p$};
			    \draw (-85,-70) node [font=\small][anchor=east] [inner sep =0.75pt] {$x_2$};
			    \draw (110,10) node [font=\small][anchor=north] [inner sep =0.75pt] {$x_1$};
			    \draw (-85,-70) -- (-77.5, -77.5);
			    \draw (-77.5,-77.5) node [font=\small][anchor=south west] [inner sep =0.75pt] {$\widetilde{x_2}$};
			    \draw(110,10) -- (110,0);
			    \draw (110,0) node [font=\small][anchor = south][inner sep =0.75] {$\widetilde{x_1}$};
			    \draw (10,-10) -- (10,0);
			    \draw (10,0) node [font=\small][anchor = north][inner sep =0.75pt] {$\widetilde{p}$};
			    \draw (10,0) -- (-77.5,-77.5);
			    \draw (10,0) -- (110,0);
		    \end{tikzpicture}
		    \caption{An example of scenario 1.}\label{fig:angleub}
		\end{figure}
		
		To bound $\langle \widetilde{x_1}-\widetilde{p}, \widetilde{x_2}-\widetilde{p}\rangle $ we use that $\angle \widetilde{x_1} \widetilde{p} \widetilde{x_2}= \angle uvw+\angle  v \widetilde{x_2} \widetilde{p}$, that the sine rule says $\|\widetilde{x_2}-\widetilde{p}\|\sin (\angle \widetilde{x_2}v\widetilde{p})=\|v-\widetilde{p}\|\sin \angle uvw$, and that $\cos \angle v\widetilde{x_2}\widetilde{p}>0$, $\cos\angle uvw<0$ and $-\sin^2\angle uvw\leq-\sin\angle uvw$. Together these imply that
		
		\begin{align*}
		    \langle \widetilde{x_1}-\widetilde{p}, \widetilde{x_2}-\widetilde{p}\rangle  &=\|\widetilde{x_1}-\widetilde{p}\|\|\widetilde{x_2}-\widetilde{p}\|\cos(\angle uvw+ \angle  v \widetilde{x_2} \widetilde{p} )\\
		    &=\|\widetilde{x_1}-\widetilde{p}\|\|\widetilde{x_2}-\widetilde{p}\|\cos\angle uvw\cos(\angle  v \widetilde{x_2} \widetilde{p})- \sin^2\angle uvw\|v-\widetilde{p}\|\|\widetilde{x_1}-\widetilde{p}\|\\
		    &\geq (R+\varepsilon)(R+3\varepsilon)\cos\angle uvw-\sin\angle uvw\|v-\widetilde{p}\|(R+\varepsilon).
		\end{align*}
	
        From the assumptions in this scenario that $\|v-\widetilde{p}\|\leq 4\varepsilon$, we can substitute into \eqref{eq:innerprod2} to get
    
	    \begin{align*}
	        &\langle x_1 -p, x_2 -p \rangle \\
	        &\geq (R+\varepsilon)(R+3\varepsilon) \cos \angle uvw - 4\varepsilon(R+\varepsilon) \sin \angle uvw - R\varepsilon(2 + \sin \angle uvw ) - (2 + \sin \angle uvw) \varepsilon^2\\
    	    &=(R+\varepsilon)\sqrt{R^2+6R\varepsilon+34\varepsilon^2}\sin\left(\angle uvw + \arctan\left(\frac{R+3\varepsilon}{5\varepsilon}\right)\right)-2\varepsilon R-2\varepsilon^2.
	    \end{align*} 
	
        From our assumptions in $\angle uvw$ 
    
    
   

        \begin{align*}
            \sin\left(\angle uvw +\arctan\left(\frac{R+3\varepsilon}{5\varepsilon}\right)\right) &>-\frac{R^2-4R\varepsilon+\varepsilon^2}{(R+\varepsilon)\sqrt{R^2+6R\varepsilon+34\varepsilon^2}}.\\
        \end{align*}

        Thus we conclude
    
        \begin{align*}
            \langle &x_1 -p, x_2 -p \rangle\\
		    &>(R+\varepsilon)\sqrt{R^2+6R\varepsilon+34\varepsilon^2}\left(-\frac{R^2-4R\varepsilon-9\varepsilon^2}{(R+\varepsilon)\sqrt{R^2+6R\varepsilon+34\varepsilon^2}}\right) -2\varepsilon R-2\varepsilon^2\\
            &=-R^2+2R\varepsilon+7\varepsilon^2.
        \end{align*}

        Now assume we are in scenario 2: $\|v-p\|<(R-\varepsilon)/2$ and $\angle u v w\leq \pi/2$.
        
        To prove this scenario, we will need to further split into two cases;
    
        \begin{enumerate}
            \item[(i)] $\angle \widetilde{x_1} \widetilde{p} \widetilde{x_2}\leq \pi/2$, and
            \item[(ii)] $\angle \widetilde{x_1} \widetilde{p} \widetilde{x_2} >\pi/2$. 
        \end{enumerate}

        In case (i) we have  $\langle\widetilde{x_1}-\widetilde{p}, \widetilde{x_2}-\widetilde{p}\rangle  \geq 0$ and thus  
        
        \begin{equation*}
            \langle x_1-p, x_2-p\rangle \geq - 3R\varepsilon-3\varepsilon^3>-R^2+2R\varepsilon +7\varepsilon^2
        \end{equation*}
        as $R>12\varepsilon$.
    
        In case (ii), thinking of the inner product in terms of the projection of vector $\widetilde{x_2}-\widetilde{p}$ onto $\widetilde{x_1}-\widetilde{p}$ we get

        \begin{align*}
           \langle x_1-p, x_2-p\rangle &\geq  -\|\widetilde{x_1}-\widetilde{p}\|\|v-\widetilde{p}\| - 3R\varepsilon-3\varepsilon^3\\
           &\geq -(R+\varepsilon)(R-\varepsilon)/2 -3R\varepsilon-3\varepsilon^3\\
           &=-R^2/2-3R\varepsilon-5\varepsilon^2/2\\
           &>-R^2+2R\varepsilon+7\varepsilon^2,
        \end{align*}
  
        where in the final inequality we use that $R>12\varepsilon$.
    \end{proof}
    
    To find sufficient conditions for detecting when a sample $p$ is near a vertex, we want each edge adjacent to that vertex to correspond to at least one distinct cluster of points in the spherical shell around $p$. To avoid the clusters around separate edges merging, we assume a lower bound on the angle between the edges as part of our assumptions on the geometric embedding. The following lemma motivates this choice of lower bound. 
  	
  	\begin{lemma}\label{lem:sepedges}
  	    Let $u,v,w\in \mathbb{R}^n$, $D > \varepsilon > 0$, and let $x_1, x_2\in \mathbb{R}^n$ satisfy
  	    
        \begin{enumerate} 
      	    \item $d(x_1, \overline{uv}),d(x_2, \overline{uw})<\varepsilon$, and
      	    \item $\|x_1 -v\|,\|x_2- v\|>D$.
        \end{enumerate}

        If 
  	    
  	    \begin{equation*}
  	        \angle uvw>\arccos\left(\frac{2D^2-9\varepsilon^2}{2D^2}\right)+2\arcsin\left(\frac{\varepsilon}{D}\right)
  	    \end{equation*}
  	
        then $\|x_1-x_2\|> 3\varepsilon$.
	\end{lemma}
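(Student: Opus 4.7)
The plan is to control $\angle x_1 v x_2$ from below using the small-angle deviations $\angle x_1 v u$ and $\angle x_2 v w$, and then to convert this angular separation into a metric one via the law of cosines together with the AM--GM inequality. The two summands in the hypothesised lower bound on $\angle uvw$ should line up exactly: $\arccos\bigl((2D^2-9\varepsilon^2)/(2D^2)\bigr)$ is the angle at which two points at distance exactly $D$ from $v$ along the edges are exactly $3\varepsilon$ apart, while $2\arcsin(\varepsilon/D)$ is the angular slack needed to absorb the $\varepsilon$-noise.

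First I would show that $\angle x_1 v u < \arcsin(\varepsilon/D)$. The orthogonal projection of $x_1$ onto the line through $\overline{uv}$ cannot lie strictly beyond $v$ on the side opposite $u$, for then the closest point of $\overline{uv}$ to $x_1$ would be $v$ itself and we would have $\|x_1 - v\| < \varepsilon < D$, contradicting the hypothesis. Letting $h_1 \leq d(x_1,\overline{uv}) < \varepsilon$ denote the perpendicular distance from $x_1$ to this line,
\[
\sin(\angle x_1 v u) \;=\; \frac{h_1}{\|x_1-v\|} \;<\; \frac{\varepsilon}{D},
\]
and symmetrically $\angle x_2 v w < \arcsin(\varepsilon/D)$. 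The spherical triangle inequality, applied to the unit vectors at $v$ pointing towards $u, x_1, x_2, w$, then gives
\[
\angle x_1 v x_2 \;\geq\; \angle uvw - \angle x_1 v u - \angle x_2 v w \;>\; \angle uvw - 2\arcsin(\varepsilon/D),
\]
which together with the hypothesis upgrades to $\cos(\angle x_1 v x_2) < (2D^2-9\varepsilon^2)/(2D^2)$.

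Setting $a := \|x_1-v\|$ and $b := \|x_2-v\|$, both exceeding $D$, the law of cosines combined with $a^2+b^2 \geq 2ab$ yields
\[
\|x_1-x_2\|^2 \;>\; a^2 + b^2 - 2ab\cdot\frac{2D^2-9\varepsilon^2}{2D^2} \;\geq\; 2ab\cdot\frac{9\varepsilon^2}{2D^2} \;=\; \frac{9ab\,\varepsilon^2}{D^2} \;>\; 9\varepsilon^2,
\]
so $\|x_1-x_2\| > 3\varepsilon$. The one subtle step is the angular deviation estimate above: proximity to the \emph{segment} does not by itself control the angle $\angle x_i v \cdot$, and one genuinely needs the hypothesis $\|x_i - v\| > D > \varepsilon$ to rule out the degenerate configuration in which the nearest point of the segment is $v$ itself. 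Once that pathology is excluded, the remainder is a short trigonometric computation plus a one-line AM--GM estimate.
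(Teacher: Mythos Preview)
Your proof is correct and follows essentially the same route as the paper: bound the deviation angles $\angle x_1 v u$ and $\angle x_2 v w$ by $\arcsin(\varepsilon/D)$, deduce a lower bound on $\angle x_1 v x_2$, and then invoke the law of cosines. The only cosmetic difference is in the last step: the paper argues directly that the law-of-cosines expression is minimised at $\|x_1-v\|=\|x_2-v\|=D$, whereas you reach the same conclusion via the AM--GM inequality $a^2+b^2\geq 2ab$; both yield the same bound $9\varepsilon^2$, and your treatment of the degenerate projection case is a welcome bit of extra care that the paper omits.
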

	
	\begin{proof}

	    The distance between $x_1$ and $x_2$ is  minimised when $\|v - x_1\|= D= \|v-x_2\|$. Furthermore we can observe that $\angle u v x_1=\arcsin\left(\frac{d(x_1,\overline{uv})}{\|x_1- v\|}\right)\leq\arcsin(\varepsilon/D)$. Similarly $\angle u v x_1 \leq\arcsin(\varepsilon/D)$. This implies
	
	    \begin{equation*}
	        \angle x_1 v x_2\geq \angle u v w -\angle u v x_1 -\angle w v x_2 \geq \alpha-2\arcsin(\varepsilon/D).
        \end{equation*}
	
	    Combining we conclude
	
	    \begin{align*}
	        \|x_1- x_2\|^2&\geq \|v-x_1\|^2+\|v-x_2\|^2-\|v-x_1\|\|v-x_2\|\cos\angle x_1 v x_2\\
	        &\geq 2D^2 -2D^2\cos(\alpha-2\arcsin(\varepsilon/D))\\
	        &\geq(3\varepsilon)^2.
	    \end{align*}

	\end{proof}	

\section{Determining Local Structure}\label{sec:localstruct}

	Given an $\varepsilon$-sample $P$ of an embedded graph $|G|$, we want to recover the abstract graph $G$ by approximating the local structure of $|G|$ at each sample $p \in P$. In this process, we regularly consider the graph on a set of points, with edges $(p,q)$ if $\|p-q\| \leq r$, for some fixed $r \in \mathbb{R}$.
	
	\begin{definition}
		Let $P \subset \mathbb{R}^N$ be a finite collection of points, and fix $r> 0$. The \emph{graph at threshold $r$ on $P$}, $\mathfrak{G}_r(P)$, is the graph with vertices $p \in P$, and edges $(p,q)$ if $\|p-q\| \leq r$.
	\end{definition}
	
	For each $p \in P$, we will consider two graphs on points close to $p$: the first approximates the connectedness of $|G|$ intersected with a ball around $p$, the second consists of points in a spherical shell around $p$. We call this pair of graphs the \emph{$(R, \varepsilon)$-local structure of $P$ at $p$}.
	
	\begin{definition}[$(R, \varepsilon)$-local structure]
		Let $P \subset \mathbb{R}^n$ be an $\varepsilon$-sample of an embedded graph $|G|$ and fix $R > 12 \varepsilon$. The $(R, \varepsilon)$-local structure of $P$ at a sample $p\in P$ is the pair 
		
		\begin{equation*}
			\left(\mathfrak{G}_{3\varepsilon}(P \cap B_{R+ \varepsilon}(p)), \mathfrak{G}_{3\varepsilon}(P \cap S_{R-\varepsilon}^{R+\varepsilon}(p))\right).
		\end{equation*}
	\end{definition}
	
	We want to use the $(R, \varepsilon)$-local structure to approximate $|G| \cap B_R(p)$ for each $p \in P$, and use this to learn the structure of $|G|$. We will classify samples as being near a vertex or not near a vertex by their $(R,\varepsilon)$-local structure. 
	
	We now formalise what the $(R, \varepsilon)$-local structure is for points $p \in P$ not near any vertex $v \in |G|$. That is, points which have $(R,\varepsilon)$-local structure of an edge. 
	
	\begin{definition}[Local structure of an edge]
		Let $P$ be an $\varepsilon$-sample of a linearly embedded graph $|G|$. A point $p \in P$ has the \emph{$(R,\varepsilon)$-local structure of an edge} if either of the following hold:
		
		\begin{enumerate}
			\item $\mathfrak{G}_{3\varepsilon}(P \cap B_{R+ \varepsilon}(p))$ is disconnected, 
			\item $\mathfrak{G}_{3\varepsilon}(P \cap B_{R+ \varepsilon}(p))$ is connected, $\mathfrak{G}_{3\varepsilon}(P \cap S_{R-\varepsilon}^{R+\varepsilon}(p))$ has two connected components $c_1, c_2$ with average points $q_1$ and $q_2$, and
			\begin{equation*}
			    \langle q_1-p, q_2-p\rangle \leq -R^2+2R\varepsilon+7\varepsilon^2.
			\end{equation*}
		\end{enumerate}
	
	\end{definition}		

	We now define the \emph{$(R,\varepsilon)$-local structure of a vertex}.
	
	\begin{definition}[Local structure of a vertex]
		Let $P$ be an $\varepsilon$-sample of a linearly embedded graph $|G|$. A point $p \in P$ has the \emph{$(R,\varepsilon)$-local structure of a vertex} if either of the following hold:
		\begin{enumerate}
			\item $\mathfrak{G}_{3\varepsilon}(P \cap B_{R+ \varepsilon}(p))$ is connected, and the number of connected components in $\mathfrak{G}_{3\varepsilon}(P \cap S_{R-\varepsilon}^{R+\varepsilon}(p))$ is not 2, 
			\item $\mathfrak{G}_{3\varepsilon}(P \cap B_{R+ \varepsilon}(p))$ is connected, $\mathfrak{G}_{3\varepsilon}(P \cap S_{R-\varepsilon}^{R+\varepsilon}(p))$ has two connected components $c_1, c_2$ with average points $q_1$ and $q_2$, and 
			\begin{equation*}
			    \langle q_1-p, q_2-p\rangle > -R^2+2R\varepsilon+7\varepsilon^2.
			\end{equation*}
			\color{black}
		\end{enumerate}
	\end{definition}	
	
	Next, we formally define $P_0$ and $P_1$.
	
	\begin{definition}[$P_0$ and $P_1$]\label{defn:p0p1}
	    Given an $\varepsilon$-sample $P$ of a linearly embedded graph $|G| \subset \mathbb{R}^n$, we define the partitioning sets $P_0$ and $P_1$ as follows:
	    \begin{align*}
	        P_0 &= \{p \in P \mid \text{ $p$ has the $(R,\varepsilon)$-local structure of a vertex.} \}\\
	        P_1 &= \{p \in P \mid \text{ $p$ has the $(R,\varepsilon)$-local structure of an edge.} \}\\
	    \end{align*}
	\end{definition}

	As we use the connected components of $\mathfrak{G}_{3\varepsilon}(P \cap B_{R+ \varepsilon}(p))$ and  $\mathfrak{G}_{3\varepsilon}(P \cap S_{R-\varepsilon}^{R+\varepsilon}(p))$ in the definition of the $(R,\varepsilon)$-local structure of $p$, we require some assumptions on $|G|$ to ensure that we correctly identify when points are near vertices or not. To ensure $\mathfrak{G}_{3\varepsilon}(P \cap B_{R+ \varepsilon}(p))$  is not disconnected for points $p$ near some vertex, we assume that the distance between a vertex $v$ and any edge $\overline{uw}$, $u,v \neq v$, is bounded below $d(v, \overline{wv}) > R +\frac{R}{2} + 2 \varepsilon$. To ensure that there are samples near edges which are not near any vertex, we additionally assume that for every pair of vertices $u,v$, $\|u-v\| > \frac{9R}{2}+6\varepsilon$. 
	
	We also place lower and upper bounds on the angles between edges. For ease of notation, we will define two functions for these bounds.

	\begin{definition}\label{defn:angleub}
		Given $R > 12 \varepsilon$, we set 
	    \begin{align*}
		    \Psi(R, \varepsilon) &:=  \pi-\arctan\left(\frac{R+3\varepsilon}{6\varepsilon}\right) +\arcsin\left(\frac{R^2-4R\varepsilon-9\varepsilon^2}{(R+\varepsilon)\sqrt{R^2+6R\varepsilon+34\varepsilon^2}}\right),\\
		    \Phi(R,\varepsilon) &:= \arccos\left(\frac{(R-\varepsilon)^2-18\varepsilon^2}{(R-\varepsilon)^2}\right)+2\arcsin\left(\frac{2\varepsilon}{(R-\varepsilon)}\right).\\
        \end{align*}
	\end{definition} 	
	
	To improve intuition of these functions, Figures \ref{fig:Psi} and \ref{fig:Phi} provide graphs of them. Note they are effectively a function of $\frac{R}{\varepsilon}$ as they are invariant to scaling both $R$ and $\varepsilon$ by the same amount.
	
	\begin{figure}[H]\label{fig:Phi_Psi}
	     \includegraphics[width=300pt]{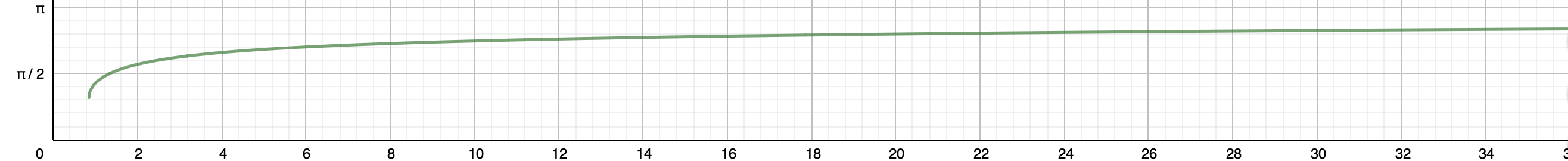}
	     \caption{Graph of $\Psi\left(\frac{R}{\varepsilon}, 1\right)$.}\label{fig:Psi}
	\end{figure}
	
    \begin{figure}[H]
	     \includegraphics[width=300pt]{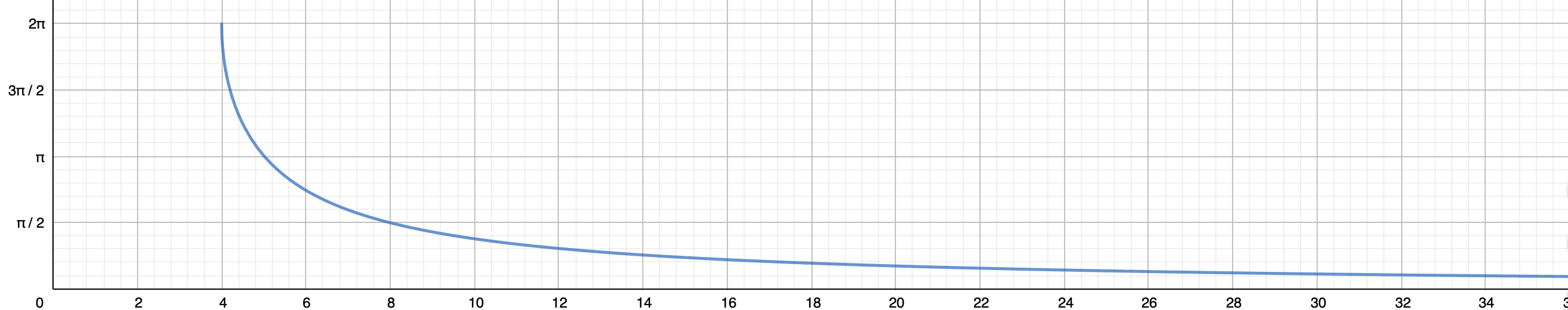}
	      \caption{Graph of $\Phi\left(\frac{R}{\varepsilon}, 1\right)$.}\label{fig:Phi}
	\end{figure}
	
	Henceforth, we assume that all embedded graphs $|G|$ satisfy the following assumptions.
	
	\begin{assumption}\label{as:embed}
		Fix $R\geq 12\varepsilon>0$. We restrict to embedded graphs $|G|= (G, \phi_G)$ satisfying the following.
		\begin{enumerate}
		\item For all vertices $u,v$, $\|u-v\| > \frac{9R}{2} + 6 \varepsilon$. 
		\item For a vertex $v$ and an edge $\overline{uw}$, with $u,w \neq v$, $d(v, \overline{uw}) >  \frac{3R}{2} + 4 \varepsilon$. 
		\item For any pair of edges $\overline{uv}, \overline{xy}$ with no common vertex, $d(\overline{uv}, \overline{xy}) > 5 \varepsilon$.
		\item For all pairs of edges $\overline{uv}, \overline{wv}$, $\angle uvw \geq \Phi(R, \varepsilon)$.
		\item For all degree 2 vertices $v$ with edges $\overline{uv}, \overline{wv}$, $\angle uvw \leq \Psi(R, \varepsilon)$.
	\end{enumerate}
	\end{assumption}

	The propositions in this section are used to show that the clusters in $P_0$ and $P_1$ correspond bijectively with the vertices and edges of $|G|$. The first proposition shows that for all samples $p$ near a vertex $v$ with $\text{deg}(v) \neq 2$, $p$ has the $(R,\varepsilon)$-local structure of a vertex. The second and third prove that samples near degree $2$ vertices also have the $(R,\varepsilon)$-local structure of a vertex. The final proposition shows that all samples $p$ not near any vertex have the $(R,\varepsilon)$-local structure of an edge. 

	\begin{proposition}\label{prop:0ball}
		Let $v$ be a vertex of $|G| \subset \mathbb{R}^n$ with $\text{deg}(v) \neq 2$, and let $P$ be an $\varepsilon$-sample of $|G|$. Then for all  $p \in P$ with $\|p-v\| \leq \frac{R-\varepsilon}{2}$, $p$ has the $(R,\varepsilon)$-local structure of a vertex.
	\end{proposition}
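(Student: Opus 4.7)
The plan is to verify case (1) of the $(R,\varepsilon)$-local structure of a vertex: show that the ball graph $\mathfrak{G}_{3\varepsilon}(P \cap B_{R+\varepsilon}(p))$ is connected, and that the shell graph $\mathfrak{G}_{3\varepsilon}(P \cap S_{R-\varepsilon}^{R+\varepsilon}(p))$ has exactly $\text{deg}(v) \neq 2$ components.

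First I would use Assumptions \ref{as:embed}(1) and (2) to localise the relevant geometry. The bound $\|p - v\| \leq (R-\varepsilon)/2$ gives $B_{R+\varepsilon}(p) \subset B_{(3R+\varepsilon)/2}(v)$, and $(3R+\varepsilon)/2$ sits below both the inter-vertex threshold $9R/2+6\varepsilon$ and the vertex-to-non-incident-edge threshold $3R/2+4\varepsilon$. Hence $v$ is the only vertex in $B_{R+\varepsilon}(p)$, only the edges incident to $v$, say $\overline{u_1 v},\ldots,\overline{u_{\text{deg}(v)} v}$, can meet this ball, and $|G| \cap B_{R+\varepsilon}(p)$ is a star of line segments sharing $v$.

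For the ball graph, I would fix a sample $v' \in P$ with $\|v' - v\| \leq \varepsilon$ (which lies in $B_{(R+\varepsilon)/2}(p) \subset B_{R+\varepsilon}(p)$) and build a $3\varepsilon$-chain from any $p' \in P \cap B_{R+\varepsilon}(p)$ to $v'$. Taking $\tilde{p}' \in |G|$ closest to $p'$, which must lie on some incident edge $\overline{u_i v}$, I would step along the edge from $\tilde{p}'$ toward $v$ in increments of length $\varepsilon$, selecting at each step a sample within $\varepsilon$; consecutive samples are then within $3\varepsilon$ and the final sample is within $2\varepsilon$ of $v$, hence within $3\varepsilon$ of $v'$. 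The technical step is verifying these chain samples lie in $B_{R+\varepsilon}(p)$: since $\|\cdot - p\|$ is convex along the edge, the walk's distance to $p$ is bounded above by the linear interpolation between $\|\tilde{p}' - p\| \leq R + 2\varepsilon$ and $\|v - p\| \leq (R-\varepsilon)/2$, and the hypothesis $R > 12\varepsilon$ ensures the walk only briefly (for arc length $O(\varepsilon)$) leaves $B_R(p)$, so the associated samples still satisfy $\|q - p\| \leq R + \varepsilon$.

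For the shell graph, each edge $\overline{u_i v}$ intersects $S_{R-\varepsilon}^{R+\varepsilon}(p)$ in a single sub-segment, by convexity of $\|\cdot - p\|$ on the edge together with $\|v - p\| < R - \varepsilon$ and $\|u_i - p\| \geq \|u_i - v\| - \|v - p\| > R + \varepsilon$ via Assumption \ref{as:embed}(1). Shell samples near the same segment form a single connected component by the analogous chaining argument, while Lemma \ref{lem:sepedges} separates samples associated to distinct edges: any shell sample $q$ satisfies $\|q - v\| \geq (R - \varepsilon) - (R-\varepsilon)/2 = (R-\varepsilon)/2$ by triangle inequality, so applying Lemma \ref{lem:sepedges} with $D = (R-\varepsilon)/2$ recovers exactly the $\Phi(R, \varepsilon)$ threshold supplied by Assumption \ref{as:embed}(4). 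This yields exactly $\text{deg}(v)$ components, and combined with the connectedness of the ball graph establishes case (1) of the $(R,\varepsilon)$-local structure of a vertex. The main obstacle will be the quantitative walking argument for the ball graph, where samples near the outer boundary $\{q : \|q - p\| \approx R + \varepsilon\}$ require delicate estimates of the arc-length overshoot of the edge outside $B_R(p)$, controlled by $\|p-v\| \leq (R-\varepsilon)/2$ and $R > 12\varepsilon$.
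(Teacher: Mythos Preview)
Your overall strategy is close to the paper's, but there is one genuine gap. You aim to establish case~(1) of the vertex definition by showing the shell graph $\mathfrak{G}_{3\varepsilon}(P \cap S_{R-\varepsilon}^{R+\varepsilon}(p))$ has \emph{exactly} $\deg(v)$ components, and for this you assert that ``shell samples near the same segment form a single connected component by the analogous chaining argument.'' That chaining argument does not transfer to the shell: walking along the edge in $\varepsilon$-steps and picking samples within $\varepsilon$ of each waypoint gives samples whose distance to $p$ may fall below $R-\varepsilon$ (or exceed $R+\varepsilon$), so the intermediate samples need not lie in $S_{R-\varepsilon}^{R+\varepsilon}(p)$ at all. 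Without this, you have not ruled out that the shell samples along a single incident edge split into several $3\varepsilon$-clusters.

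For $\deg(v)\geq 3$ this overclaim is harmless: one sample per edge plus Lemma~\ref{lem:sepedges} already gives $\geq \deg(v)\geq 3$ components, which is $\neq 2$, and that is all case~(1) requires. The paper argues exactly this way and never claims same-edge samples merge. The real problem is $\deg(v)=1$: if the single edge produces two shell clusters, your case~(1) argument fails outright. The paper does not try to prove there is only one cluster; instead it observes that if two clusters occur, both lie on the same side of the hyperplane through $p$ perpendicular to $\overline{uv}$ (since $\|p-v\|\leq (R-\varepsilon)/2$), whence $\angle x_1 p x_2$ is acute and $\langle x_1-p,x_2-p\rangle>0>-R^2+2R\varepsilon+7\varepsilon^2$, so case~(2) of the vertex definition applies. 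You should either supply a genuine proof that a single edge yields a single shell cluster (this is nontrivial; compare the careful estimates in Proposition~\ref{prop:nostrayverts}), or, more simply, drop the ``exactly $\deg(v)$'' goal and handle the $\deg(v)=1$ two-cluster possibility via the inner-product test as the paper does.
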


	\begin{proof}
		We begin by considering $\textbf{deg}\mathbf{(v) =0}$. By Assumptions \ref{as:embed} (1), $\|p-v\| \leq \varepsilon$, and for all $q \in P \cap B(p,R+\varepsilon)$, $\|q-v\| \leq \varepsilon$. Thus $\mathfrak{G}_{3\varepsilon}\left( P \cap B_{R+\varepsilon}(p)\right)$ is connected. Similarly, $P \cap S_{R-\varepsilon}^{R+\varepsilon}(p) =\emptyset$, and $p$ has the $(R,\varepsilon)$-local structure of a vertex.
		
		Next, assume $\textbf{deg}\mathbf{(v) = 1}$. For the edge $\overline{uv}$, let $t_0, t_1, \ldots, t_m$ be consecutive points along $\overline{uv}$ with $\| t_0 -v\|, \| t_{i+1} - t_i\| \leq \varepsilon$ and $\|p - t_m\| = R+ \varepsilon$. Then, there must be $z_0, z_1, \ldots, z_m \in P$ with $\|t_i - z_i \| \leq \varepsilon$. Note, these $z_i$ may not be unique. Since $\|z_i- z_{i+1}\| \leq 3 \varepsilon$, and every sample in $P \cap B_{R+\varepsilon}(p)$ is within $3\varepsilon$ of some $z_i$, $\mathfrak{G}_{3\varepsilon}\left( P \cap B_{R+\varepsilon}(p)\right)$ is connected.
	
	    If the number of clusters in $\mathfrak{G}_{3\varepsilon}(P \cap S_{R-\varepsilon}^{R+\varepsilon}(p))$ is not $2$, then $p$ has the $(R, \varepsilon)$-local structure of a vertex. Thus suppose that there are $2$ connected components. We will show that inner product condition between their averages will declare that $p$ has the $(R, \varepsilon)$-local structure of a vertex. 
	
	    Let $x_1, x_2\in P \cap S_{R-\varepsilon}^{R+\varepsilon}(p)$ be samples in the two connected components $c_1$ and $c_2$. Observe that both $x_1$ and $x_2$ are within $\varepsilon$ of the line $\overline{uv}$.
		
		As $\|p-v\| \leq \frac{R-\varepsilon}{2}$, and $x_1, x_2$ are within $\varepsilon$ of the same edge $\overline{uv}$, $x_1$ and $x_2$ are contained on the same side of hyper-plane $H$ through $p$ perpendicular to $\overline{vu}$. 
		
		We can observe that $\angle x_1 p x_2\leq 2\arccos\left(\frac{2\varepsilon}{R-\varepsilon}\right)<\pi/2$, and thus
		
		\begin{align*}
		    \langle x_1 -p, x_2 -p\rangle >0>-R^2+2R\varepsilon+7\varepsilon^2.
		\end{align*}
		
		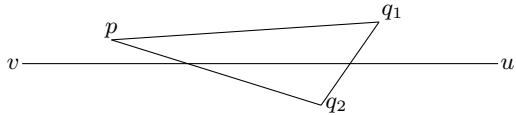
\begin{figure}[H]
		    \centering
		    \begin{tikzpicture}[x=0.75pt,y=0.75pt,yscale=-1.5,xscale=1.5]
			    \draw (-30,0) -- (130,0);
			    \draw (0,-8) node [anchor=south][font=\small][inner sep =0.5pt] {$p$};
			    \draw (90,-14) -- (70.5,14);
			    \draw (-30,0) node [anchor=east][font=\small][inner sep=0.75pt] {$v$};
			    \draw (70.5,14) node [anchor=west][font=\small][inner sep=1.5pt] {$q_2$};
			    \draw (90,-14) node [anchor=south west][font=\small][inner sep=0.75pt] {$q_1$};
			    \draw (0,-8) -- (90,-14);
			    \draw (0,-8) -- (70.5,14);
			    \draw (130,0) node [anchor=west][font=\small][inner sep=0.75pt] {$u$};
		    \end{tikzpicture}
		    \caption{Both $q_1$ and $q_2$ are in the same half-space generated by the hyper-plane through $p$ perpendicular to $\overline{uv}$.}\label{fig:deg1cc}
		\end{figure}
	
	    As this holds for all $x_1 \in c_1, x_2 \in c_2$, it also holds for the averages $q_1$ and $q_2$. Thus $p$ has the $(R,\varepsilon)$-local structure of a vertex.

		Finally, assume $\textbf{deg}\mathbf{(v) \geq 3}$. From analogous arguments as in the degree $1$ case we know that $\mathfrak{G}_{3\varepsilon}\left( P \cap B_{R+\varepsilon}(p)\right)$ is connected.
		
		
		Now consider $S_{R-\varepsilon}^{R+\varepsilon}(p)$. For each edge $\overline{uv}$, there is a sample $x_{\overline{uv}} \in S_{R-\varepsilon}^{R+\varepsilon}(p)$. To show there are at least $3$ connected components in  $\mathfrak{G}_{3\varepsilon}( P \cap S_{R-\varepsilon}^{R+\varepsilon}(p))$, we need only check that samples from different edges cannot merge to be in the same connected component in $\mathfrak{G}_{3\varepsilon}( P \cap S_{R-\varepsilon}^{R+\varepsilon}(p))$. By way of contradiction suppose there were edges $\overline{uv}$ and $\overline{wv}$ and samples $x_u, x_v\in P \cap S_{R-\varepsilon}^{R+\varepsilon}(p)$ within $\varepsilon$ of $\overline{uv}$ and $\overline{wv}$ respectively such that $\|x_u- x_w\|\leq 3\varepsilon$. As $\|p-v\|\leq (R-\varepsilon)/2$ and $\|p-x_u\|,\|p-x_v\|\geq R-\varepsilon$ we know $\|v- x_u\|, \|v- x_w\|>(R-\varepsilon)/2$. This contradicts Lemma \ref{lem:sepedges} as this implies that $\|x_u- x_v\|>3\varepsilon.$
		
	    We conclude that $\mathfrak{G}_{3\varepsilon}( P \cap S_{R-\varepsilon}^{R+\varepsilon}(p))$ has at least as many connected components as the degree of $v$. Thus, $p$ has the $(R,\varepsilon)$-local structure of a vertex.
	\end{proof}
	
	\begin{proposition}\label{prop:0balldeg2big}
		Let $v$ be a vertex of $|G| \subset \mathbb{R}^n$ with $\text{deg}(v) = 2$, with edges $\overline{uv}, \overline{wv}$. Let $P$ be an $\varepsilon$-sample of $|G|$. If $\angle uwv > \frac{\pi}{2}$, then for all  $p \in P$ with $\|p-v\| \leq 4\varepsilon$,
		$p$ has the $(R,\varepsilon)$-local structure of a vertex.
	\end{proposition}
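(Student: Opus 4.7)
The plan is to mirror the strategy of Proposition \ref{prop:0ball} while invoking scenario 1 of Lemma \ref{lem:anglebound} to handle the two-component case in the spherical shell. I read the stated angle condition as $\angle uvw > \pi/2$ (the angle at the degree-$2$ vertex $v$), which places us in the regime covered by that scenario.

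First I would verify that $\mathfrak{G}_{3\varepsilon}(P \cap B_{R+\varepsilon}(p))$ is connected. Since $\|p-v\| \leq 4\varepsilon$, Assumptions \ref{as:embed}(1) and \ref{as:embed}(2) ensure no vertex other than $v$ and no edge disjoint from $v$ comes within $R+\varepsilon$ of $p$, so $|G| \cap B_{R+\varepsilon}(p)$ is a connected subset of $\overline{uv}\cup\overline{wv}$ containing $v$. Exactly as in Proposition \ref{prop:0ball}, one can chain samples along each edge outward from $v$ with consecutive gaps at most $3\varepsilon$ using the $\varepsilon$-sample property, yielding connectivity of the ball graph.

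Next I would analyse the shell graph $\mathfrak{G}_{3\varepsilon}(P \cap S_{R-\varepsilon}^{R+\varepsilon}(p))$. If its number of components is not $2$, the first clause of the definition of the $(R,\varepsilon)$-local structure of a vertex is immediately satisfied. Assume instead there are exactly two components $c_1, c_2$. Every shell sample $q$ satisfies $\|q-v\| \geq (R-\varepsilon) - 4\varepsilon = R-5\varepsilon > (R-\varepsilon)/2$ (using $R > 12\varepsilon$), and by Assumption \ref{as:embed}(2) $q$ cannot lie within $\varepsilon$ of any edge not incident to $v$, so $q$ is within $\varepsilon$ of $\overline{uv}$ or of $\overline{wv}$. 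Assumption \ref{as:embed}(4) is precisely the hypothesis of Lemma \ref{lem:sepedges} with $D = (R-\varepsilon)/2$, so any two shell samples within $\varepsilon$ of different edges are more than $3\varepsilon$ apart and hence lie in different components. After relabelling, $c_1$ consists of samples within $\varepsilon$ of $\overline{wv}$ and $c_2$ of samples within $\varepsilon$ of $\overline{uv}$.

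Finally, for any $x_1 \in c_1$ and $x_2 \in c_2$ all the hypotheses of scenario 1 of Lemma \ref{lem:anglebound} hold: $p$ and $x_1$ are within $\varepsilon$ of $\overline{wv}$, $x_2$ is within $\varepsilon$ of $\overline{uv}$, $\|x_i - p\| \in [R-\varepsilon, R+\varepsilon]$, $\|p-v\| < 4\varepsilon$, and combining the proposition's hypothesis with Assumption \ref{as:embed}(5) yields $\pi/2 < \angle uvw \leq \Psi(R,\varepsilon)$. Lemma \ref{lem:anglebound} then delivers $\langle x_1 - p,\, x_2 - p\rangle > -R^2 + 2R\varepsilon + 7\varepsilon^2$. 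Because the inner product is bilinear, averaging this strict inequality over all pairs $(x_1, x_2) \in c_1 \times c_2$ gives the same bound for the cluster averages $q_1, q_2$, so clause (2) of the local-structure-of-a-vertex definition is met. The main obstacle is the geometric bookkeeping needed to legitimately invoke the two lemmas — confirming every shell sample lies near one of the two incident edges, that clusters near distinct edges cannot merge, and that the strict inner-product bound survives averaging — after which Lemmas \ref{lem:anglebound} and \ref{lem:sepedges} carry out the analytic work.
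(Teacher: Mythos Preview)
Your proposal is correct and follows essentially the same route as the paper: establish connectivity of the ball graph as in Proposition \ref{prop:0ball}, use Lemma \ref{lem:sepedges} (with $D=(R-\varepsilon)/2$, matching Assumption \ref{as:embed}(4)) to ensure the two shell clusters lie one per edge, and then invoke scenario 1 of Lemma \ref{lem:anglebound} for the inner-product bound, passing to cluster averages by bilinearity. Your write-up is in fact more careful than the paper's in verifying the hypotheses of the two lemmas and in justifying the averaging step.
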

	
	\begin{proof}
		As in the proof Proposition \ref{prop:0ball}, $\mathfrak{G}_{2\varepsilon}(P \cap B_{R+\varepsilon}(p))$ is connected. 
		
		For both edges $\overline{uv}, \overline{wv}$ there is at least one sample in $S_{R-\varepsilon}^{R+\varepsilon}(P)$, say $q_{\overline{uv}}$ and $q_{\overline{wv}}$. By Lemma \ref{lem:sepedges}, for all $q'$ in $S_{R_\varepsilon}^{R+\varepsilon} \cap P$, if $d(q',q_{\overline{wv}}) \leq 3\varepsilon$, then $\|q'- q\|> 3 \varepsilon$. Hence, each edge contributes at least 1 connected component to  $\mathfrak{G}_{3\varepsilon}(P \cap S_{R-\varepsilon}^{R+\varepsilon}(p))$. 
		
		If there are more than 2, then $p$ has the $(R,\varepsilon)$-local structure of a vertex. We now assume there are 2 connected components $c_1, c_2$ (one per edge) in $\mathfrak{G}_{3\varepsilon}(P \cap S_{R-\varepsilon}^{R+\varepsilon}(p))$. Lemma \ref{lem:anglebound} gives
		
		\begin{align*}	
			\langle q_1 -p, q_2 -p \rangle &> -R^2+2R\varepsilon+7\varepsilon^2,\\
		\end{align*}
		
		and $p$ has the $(R,\varepsilon)$-local structure of a vertex.
    \end{proof}

    \begin{proposition}\label{prop:0balldeg2small}
		Let $v$ be a vertex of $|G| \subset \mathbb{R}^n$ with $\text{deg}(v) = 2$, with edges $\overline{uv}, \overline{wv}$. Let $P$ be an $\varepsilon$-sample of $|G|$. If $\angle uvw \leq \frac{\pi}{2}$, then for all  $p \in P$ with $\|p-v\| \leq \frac{R-\varepsilon}{2}$, $p$ has the $(R,\varepsilon)$-local structure of a vertex.

	\end{proposition}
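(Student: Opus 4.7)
The plan is to follow the same three-part architecture used in Propositions \ref{prop:0ball} and \ref{prop:0balldeg2big}: first establish connectedness of the ball graph, then show the shell graph has at least two components coming from the two incident edges, and finally handle the exactly-two-component case with the inner product bound. The novelty compared to Proposition \ref{prop:0balldeg2big} is that we are now in the regime $\angle uvw \leq \pi/2$ with the larger localisation radius $\|p-v\|\leq (R-\varepsilon)/2$, so scenario (2) of Lemma \ref{lem:anglebound} is the right tool rather than scenario (1).

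First I would verify that $\mathfrak{G}_{3\varepsilon}(P \cap B_{R+\varepsilon}(p))$ is connected. Assumption \ref{as:embed}(2) ensures that no edge other than $\overline{uv},\overline{wv}$ comes within $R+\varepsilon$ of $p$, so every sample in $B_{R+\varepsilon}(p)$ is within $\varepsilon$ of $\overline{uv}\cup\overline{wv}$. Then, exactly as in the $\deg(v)=1$ case of Proposition \ref{prop:0ball}, I would chain samples $z_0,z_1,\dots$ along each edge starting from a sample near $v$ and conclude that all samples in the ball lie in a single $3\varepsilon$-cluster.

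Next I would analyse $\mathfrak{G}_{3\varepsilon}(P \cap S_{R-\varepsilon}^{R+\varepsilon}(p))$. Density guarantees at least one sample in the shell within $\varepsilon$ of each of $\overline{uv}$ and $\overline{wv}$. The key estimate is that any shell sample $x$ near an edge satisfies $\|x-v\|\geq \|x-p\|-\|p-v\|\geq (R-\varepsilon)/2$, so Lemma \ref{lem:sepedges} applies with $D=(R-\varepsilon)/2$; the resulting angle threshold is exactly $\Phi(R,\varepsilon)$, which is at most $\angle uvw$ by Assumption \ref{as:embed}(4). Hence samples near $\overline{uv}$ and samples near $\overline{wv}$ lie more than $3\varepsilon$ apart, so the shell graph has at least two connected components, one per edge. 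If there are strictly more than two components, $p$ already has the $(R,\varepsilon)$-local structure of a vertex by definition and we are done.

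The remaining case is exactly two components $c_1,c_2$, which, by the separation above, must correspond one-to-one to the two edges. To conclude I want to apply scenario (2) of Lemma \ref{lem:anglebound}: the hypotheses $\|p-v\|<(R-\varepsilon)/2$ and $\angle uvw\leq \pi/2$ are exactly the assumptions of this proposition, and by the $\varepsilon$-sample property $p$ lies within $\varepsilon$ of one of the two edges, say $\overline{wv}$, so the lemma yields $\langle x_1-p, x_2-p\rangle > -R^2+2R\varepsilon+7\varepsilon^2$ for any $x_1\in c_1$, $x_2\in c_2$. Since the inner product is bilinear and the bound is uniform over pairs, it passes to the cluster averages $q_1,q_2$, giving the required vertex condition. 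The one subtlety I expect to have to handle carefully is the boundary case $\|p-v\|=(R-\varepsilon)/2$ (Lemma \ref{lem:anglebound} scenario (2) is stated with strict inequality); this can be absorbed by noting that shell samples in fact satisfy the stronger $\|x_i-p\|\geq R-\varepsilon$ with equality only in a measure-zero configuration, or by weakening the statement to a strict inequality on $\|p-v\|$, consistent with how the threshold is used later in the algorithm.
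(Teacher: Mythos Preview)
Your proposal is correct and follows essentially the same three-step architecture as the paper's proof: connectedness of the ball graph via the chaining argument from Proposition~\ref{prop:0ball}, separation of the two edge-clusters in the shell via Lemma~\ref{lem:sepedges} with $D=(R-\varepsilon)/2$, and the inner-product bound in the two-component case via scenario~(2) of Lemma~\ref{lem:anglebound}, passed to the averages $q_1,q_2$. Your flagging of the boundary case $\|p-v\|=(R-\varepsilon)/2$ versus the strict inequality in Lemma~\ref{lem:anglebound} is a legitimate technical wrinkle that the paper's proof simply glosses over.
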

	
	\begin{proof}
	    As in the proof of Proposition \ref{prop:0ball}, $\mathfrak{G}_{2\varepsilon}(P \cap B_{R+\varepsilon}(p))$ is connected. 
		
        For both edges $\overline{uv}, \overline{wv}$ there is at least one sample in $S_{R-\varepsilon}^{R+\varepsilon}(P)$, say $q_{\overline{uv}}$ and $q_{\overline{wv}}$. By Lemma \ref{lem:sepedges}, for all $q'$ in $S_{R_\varepsilon}^{R+\varepsilon} \cap P$, if $\|q'-q_{\overline{wv}}\| \leq 3\varepsilon$, then $\|q'- q\|> 3 \varepsilon$. Hence, each edge contributes at least 1 connected component to  $\mathfrak{G}_{3\varepsilon}(P \cap S_{R-\varepsilon}^{R+\varepsilon}(p))$. 

        If there are more than 2, then $p$ has the $(R,\varepsilon)$-local structure of a vertex. We now assume there are 2 connected components $c_1, c_2$ (one per edge) in $\mathfrak{G}_{3\varepsilon}(P \cap S_{R-\varepsilon}^{R+\varepsilon}(p))$. 
	   
	   Let $x_1$ and $x_2$ be points in $c_1$ and $c_2$. Without loss of generality, we have $d(x_1, \overline{uv}), d(x_2, \overline{wv}) \leq \varepsilon$. 
	   
	   From Lemma \ref{lem:anglebound} we know that $\langle x_1-p, x_2-p\rangle <-R^2+2R\varepsilon+7\varepsilon^2$. Since this inequality holds for all pairs $x_1, x_2$ in the connected components $c_1$ and $c_2$ we know it also holds for the averages $q_1$ and $q_2$. Thus we conclude $p$ has the $(R,\varepsilon)$-local structure of a vertex.

		\begin{figure}[H]
		\centering
		\begin{tikzpicture}[x=0.75pt,y=0.75pt,yscale=-1.5,xscale=1.5]
			\draw (0,0) -- (120,0);
			\draw (0,0) -- (0,-100);
			\draw (120,0) node [font=\small][anchor=west] [inner sep =0.75pt] {$w$};
			\draw (0,-100) node [font=\small][anchor=south east] [inner sep =0.75pt] {$u$};
			\draw (0,0) node [font=\small][anchor=north east] [inner sep =0.75pt] {$v$};
			\draw (10,-10) -- (-10,-77.5);
			\draw (10,-10) -- (110,10);
			\draw (10,-10) node [font=\small][anchor=south west] [inner sep =0.75pt] {$p$};
			\draw (-10,-77.5) node [font=\small][anchor=east] [inner sep =0.75pt] {$q_2$};
			\draw (110,10) node [font=\small][anchor=north] [inner sep =0.75pt] {$q_1$};
			\draw (-10,-77.5) -- (0, -77.5);
			\draw (0,-77.5) node [font=\small][anchor=south west] [inner sep =0.75pt] {$\widetilde{q_2}$};
			\draw(110,10) -- (110,0);
			\draw (110,0) node [font=\small][anchor = south][inner sep =0.75] {$\widetilde{q_1}$};
			\draw (10,-10) -- (10,0);
			\draw (10,0) node [font=\small][anchor = north][inner sep =0.75pt] {$\widetilde{p}$};
			\draw (10,0) -- (110,0);
		    \draw (10,0) -- (0,-77.5);
		\end{tikzpicture}
		\caption{}\label{fig:deg2lesspi2}
		\end{figure}

	\end{proof}

	\begin{proposition}\label{prop:nostrayverts}
		Let $p \in P$ be a sample with $\|p-v\| >\frac{3R+ \varepsilon}{2}$ for all vertices $v \in |G|$. Then $p$ has the $(R,\varepsilon)$-local structure of an edge.
	\end{proposition}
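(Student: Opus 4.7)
The plan is to argue by dichotomy on the samples in $P\cap B_{R+\varepsilon}(p)$: if any of them lie within $\varepsilon$ of an edge other than the one closest to $p$, clause (1) of the definition of the $(R,\varepsilon)$-local structure of an edge is satisfied via disconnection of $\mathfrak{G}_{3\varepsilon}(P\cap B_{R+\varepsilon}(p))$; otherwise, clause (2) is satisfied via Lemma \ref{lem:getedges}. To set up, I would first observe that $P$ being an $\varepsilon$-sample together with $\|p-v\|>(3R+\varepsilon)/2>\varepsilon$ for every vertex $v$ forces $p$ to be within $\varepsilon$ of the interior of a single edge $\overline{uv}$, and forces both endpoints $u$ and $v$ to lie strictly outside $B_{R+\varepsilon}(p)$.

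Suppose first there is a sample $q\in P\cap B_{R+\varepsilon}(p)$ that is within $\varepsilon$ of some edge $\overline{u'v'}\neq \overline{uv}$. If $\overline{u'v'}$ and $\overline{uv}$ share no vertex, Assumption \ref{as:embed}(3) gives $d(\overline{uv},\overline{u'v'})>5\varepsilon$, and two triangle inequalities then show any sample within $\varepsilon$ of $\overline{uv}$ lies farther than $3\varepsilon$ from any sample within $\varepsilon$ of $\overline{u'v'}$. If they do share a vertex (call it $v$, so $\overline{u'v'}=\overline{wv}$), I would combine the hypothesis with the triangle inequality to obtain $\|x-v\|>(R-\varepsilon)/2$ for every $x\in P\cap B_{R+\varepsilon}(p)$. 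Setting $D:=(R-\varepsilon)/2$, direct substitution shows $\arccos\bigl((2D^2-9\varepsilon^2)/(2D^2)\bigr)+2\arcsin(\varepsilon/D)=\Phi(R,\varepsilon)$, so Assumption \ref{as:embed}(4) supplies exactly the angle hypothesis of Lemma \ref{lem:sepedges} and again yields separation by more than $3\varepsilon$. Either way, $\mathfrak{G}_{3\varepsilon}(P\cap B_{R+\varepsilon}(p))$ is disconnected and clause (1) holds.

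In the complementary case, every sample in $P\cap B_{R+\varepsilon}(p)$ is within $\varepsilon$ of $\overline{uv}$. A chaining-along-the-edge argument identical in spirit to the degree-$1$ case of Proposition \ref{prop:0ball} shows $\mathfrak{G}_{3\varepsilon}(P\cap B_{R+\varepsilon}(p))$ is connected. For the shell, I would let $H$ be the hyperplane through $p$ perpendicular to $\overline{uv}$. Because both $u$ and $v$ lie outside $B_{R+\varepsilon}(p)$, the edge meets each half-shell in a nontrivial arc passing through a point at distance exactly $R$ from $p$, and $\varepsilon$-sampling produces a connected chain of shell samples on each side of $H$. Any two samples on opposite sides of $H$ project to edge points separated by arc-length at least $2\sqrt{(R-\varepsilon)^2-\varepsilon^2}$, which for $R>12\varepsilon$ greatly exceeds $5\varepsilon$, so they lie farther than $3\varepsilon$ apart in $\mathbb{R}^n$. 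Hence $\mathfrak{G}_{3\varepsilon}(P\cap S_{R-\varepsilon}^{R+\varepsilon}(p))$ has exactly two components whose averages $q_1,q_2$ sit on opposite sides of $H$, and Lemma \ref{lem:getedges} delivers $\langle q_1-p,q_2-p\rangle\leq -R^2+2R\varepsilon+7\varepsilon^2$, verifying clause (2).

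The main obstacle I expect is bookkeeping rather than insight: the numerical constant $(3R+\varepsilon)/2$ in the hypothesis is tuned precisely so that $D=(R-\varepsilon)/2$ makes the threshold appearing in Lemma \ref{lem:sepedges} coincide verbatim with the $\Phi(R,\varepsilon)$ of Definition \ref{defn:angleub}. This identification must be checked carefully in the shared-vertex subcase before Assumption \ref{as:embed}(4) can be invoked off-the-shelf; once it is, every other step is a routine consequence of the already-established geometric lemmas and the $\varepsilon$-sampling property.
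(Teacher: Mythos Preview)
Your proposal is correct and follows essentially the same route as the paper: the same dichotomy (some ball sample lies near a different edge versus all lie near $\overline{uv}$), the same invocation of Assumption~\ref{as:embed}(3) and Lemma~\ref{lem:sepedges} with $D=(R-\varepsilon)/2$ for disconnection, and the same appeal to Lemma~\ref{lem:getedges} for the two-cluster inner-product bound. The only substantive difference is in the shell step: where you gesture at a ``connected chain'' on each side of $H$, the paper instead proves the sharper fact that every shell sample on a given side of $H$ lies within $2\varepsilon$ of the single point $n\in\overline{uv}$ with $\|p-n\|=R$ (via explicit bounds on $\|\tilde p-\tilde q\|$ versus $\|\tilde p-n\|$), which immediately gives a single cluster per side without needing to chain.
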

	
	\begin{proof}
    
        We begin by showing that if there is a sample $q \in S_{R-\varepsilon}^{R+\varepsilon}(p)\cap P$ with $d(q, \overline{uv} ) > \varepsilon$, then $ \mathfrak{G}_{3\varepsilon} \left( B_{R+\varepsilon}(p) \cap P\right)$ is disconnected. To prove this suppose not. Then there exists $x,y\in B_{R+\varepsilon}(p) \cap P$ such that $d(x, \overline{uv})<\varepsilon$, $d(y, \overline{uv})>\varepsilon$ and yet $\|x-y\|<3\varepsilon$. 
        
        This splits into two cases:
        
        \begin{enumerate}
            \item[(i)] $d(y,\overline{wv}) \leq \varepsilon$ for some vertex $w \neq u$ (noting that this case covers an edge $\overline{wu}$ as well),
            \item[(ii)]$d(y, \overline{wz})\leq \varepsilon$ for vertices $w,z\neq u,v$.
        \end{enumerate}
        
        For case (i), first observe that $\|x-v\|,\|y-v\|>\frac{R-\varepsilon}{2}$. We then get a contradiction via Lemma \ref{lem:sepedges} (with $D=\frac{R-\varepsilon}{2}$) using Assumption \ref{as:embed} (4).
        
        For case (ii) recall that Assumption \ref{as:embed} (3) implies $d(\overline{uv}, \overline{wz})>5\varepsilon$. However $d(\overline{uv}, \overline{wz})<d(\overline{uv}, x)+\|x-y\|+d(y, \overline{wz})\leq 5\varepsilon$ which is a contradiction.
      
        We thus conclude that if there is some $q \in S_{R-\varepsilon}^{R+\varepsilon}(p)\cap P$ with $d(q, \overline{uv} ) > \varepsilon$ then $\mathfrak{G}_{3\varepsilon}(P \cap B_{R+\varepsilon}(p))$ is disconnected and $p$ has the $(R,\varepsilon)$-local structure of an edge.
      
        We can now assume that $\mathfrak{G}_{3\varepsilon}(P \cap B_{R+\varepsilon}(p))$ is connected, and for all $q \in P \cap B_{R+\varepsilon}(p)$, $d(q, \overline{uv}) \leq \varepsilon$.
       
        We need to show that there are two clusters of samples in $S_{R-\varepsilon}^{R+\varepsilon}(p)$.
        Let $n \in \overline{uv}$ satisfy $\|p-n\| = R$, and assume that $n$ and $q$ are on the same side of the hyper-plane $H$ through $p$ perpendicular to $\overline{uv}$. Now let $\widetilde{p}, \widetilde{q}$ be the projections of $p$ and $q$ respectively to $\overline{uv}$. 
    
        We will split the analysis into the cases where $\|\tilde{p}-\tilde{q}\|\leq \|\tilde{p}- n\|$ and where $\|\tilde{p}- \tilde{q}\|> \|\tilde{p}- n\|$. 
        
        \begin{figure}[H]            
            \centering
		    \tikzset{every picture/.style={line width=0.75pt}} 

	        \begin{tikzpicture}[x=0.75pt,y=0.75pt,yscale=-1,xscale=1]
                \draw (-150,0) -- (250,0);
                \draw (0,-20) -- (200,0);
                \draw (0,-20) -- (120, 20);
                \draw (120,20) -- (200,0);
                \draw (0,0) -- (0,-20);
                \draw (120,20) -- (120,0);
                \draw (-150,0) node [anchor = east][font=\small][inner sep =0.75pt] {$v$};
                \draw (0,-20) node [anchor = south east][font=\small][inner sep =0.75pt] {$p$};
                \draw (0,0) node [anchor = north][font=\small][inner sep =0.75pt] {$\widetilde{p}$};
                \draw (200,0) node [anchor =  north][font=\small][inner sep =0.75pt] {$n$};
                \draw (250,0) node [anchor=west][font=\small][inner sep =0.75pt] {$u$};
                \draw (120,20) node [anchor = north][font=\small][inner sep =0.75pt] {$q$};
                \draw (120,0) node [anchor = north east][font=\small][inner sep =0.75pt] {$\widetilde{q}$};
            \end{tikzpicture}
            \caption{The case where $\|\widetilde{p}-\widetilde{q}\|<\|\widetilde{p}-n\|$.}
            \label{fig:edge2small}
        \end{figure}
        
       Consider $\|\widetilde{p}-\widetilde{q}\| \leq \|\widetilde{p}-n\|$, as in Figure \ref{fig:edge2small}. Note that 
       $\|\tilde{p}-n\|\leq R$ and 
    $\|\tilde{p}-\tilde{q}\|\geq \sqrt{(R-\varepsilon)^2-(2\varepsilon)^2}$ which implies 
        
          \begin{align}\label{eq:pqn}
            \|q-n\|^2 & = \|q-\widetilde{q}\|^2 + \left(\|\tilde{p}-n\| - \|\widetilde{p}- \widetilde{q}\|^2\right)\nonumber \\ 
            &\leq \varepsilon^2 + \left( R - \sqrt{(R-\varepsilon)^2 - 4\varepsilon^2}\right)^2 
        \end{align}
        
        Now consider $\|\widetilde{p}-n\| < \|\widetilde{p}-\widetilde{q}\|$, such as in Figure \ref{fig:edge2big}.
        Here we use the bounds $\|\tilde{p}-n\|\geq \sqrt{R^2-\varepsilon^2}$ and $\|\tilde{p}-\tilde{q}\|\leq R+\varepsilon$ to say
        
        \begin{align}\label{eq:pnq}
            \|q-n\|^2 &= \|q-\widetilde{q}\|^2 + \left( \|\widetilde{p}-\widetilde{q}\| - \|\widetilde{p}-n\| \right)^2 \nonumber \\
                        &\leq \varepsilon^2 + \left( \sqrt{(R + \varepsilon)^2 } -  \sqrt{R^2 -\varepsilon^2} \right)^2. 
        \end{align}
        
        Algebraic manipulation shows that both \eqref{eq:pqn} and \eqref{eq:pnq} are bounded from above by $4\varepsilon^2$ whenever $R>12\varepsilon$.
        
        \begin{figure}[H]            
            \centering
		    \tikzset{every picture/.style={line width=0.75pt}} 

	        \begin{tikzpicture}[x=0.75pt,y=0.75pt,yscale=-1,xscale=1]
                \draw (-150,0) -- (250,0);
                \draw (0,-20) -- (200,0);
                \draw (0,-20) -- (220, 20);
                \draw (220,20) -- (200,0);
                \draw (0,0) -- (0,-20);
                \draw (220,20) -- (220,0);
                \draw (-150,0) node [anchor = east][font=\small][inner sep =0.75pt] {$v$};
                \draw (0,-20) node [anchor = south east][font=\small][inner sep =0.75pt] {$p$};
                \draw (0,0) node [anchor = north][font=\small][inner sep =0.75pt] {$\widetilde{p}$};
                \draw (200,0) node [anchor =  north][font=\small][inner sep =0.75pt] {$n$};
                \draw (250,0) node [anchor=west][font=\small][inner sep =0.75pt] {$u$};
                \draw (220,20) node [anchor = north][font=\small][inner sep =0.75pt] {$q$};
                \draw (220,0) node [anchor = north west][font=\small][inner sep =0.75pt] {$\widetilde{q}$};
            \end{tikzpicture}
            \caption{The case where $\|\widetilde{p}-\widetilde{q}\|>\|\widetilde{p}-n\|$.}
            \label{fig:edge2big}
        \end{figure}
        
        Thus, for all $q$ on the same side of $H$ as $n$ with $\|p-q\| \leq R$, we have $\|q-n\| \leq 2 \varepsilon$. 
        
        As $n \in \overline{uv}$, there is a sample $q_n\in P$ with $\|n-q_n\| \leq \varepsilon$. Importantly since $B_{\varepsilon}(n)\subset S_{R-\varepsilon}^{R+\varepsilon}(p)$ we can say that $q_n$ connects to all the $P\cap S_{R-\varepsilon}^{R+\varepsilon}(p)$ on the same side of $H$  within $\mathfrak{G}_{3\varepsilon}\left(P \cap S_{R-\varepsilon}^{R+\varepsilon}(p)\right)$.
     
        Thus, on each side of $H$, we have a single cluster of points, which are connected at $3\varepsilon$. Thus, $\mathfrak{G}_{3\varepsilon}\left(P \cap S_{R-\varepsilon}^{R+\varepsilon}(p)\right)$ has two connected components. Then, Lemma \ref{lem:getedges} implies that $p$ has the $(R,\varepsilon)$-local structure of an edge.
    \end{proof}

\section{Algorithm and Its Correctness}\label{sec:alg}

	In this section, we present the algorithm from \href{http://github.com/yossibokor/Skyler.jl}{Skyler}, and prove that given $P$ an $\varepsilon$-sample of an embedded graph $|G|= (G, \phi_G)$ satisfying Assumptions \ref{as:embed}, the algorithm returns an isomorphic graph structure. The algorithm partitions $P$ into $P_0$ and $P_1$, such that for each $p \in P_0$, $p$ has the $(R, \varepsilon)$-local structure of a vertex, and for each $p \in P_1$, $p$ has the $(R, \varepsilon)$-local structure of an edge. We then detect the number of vertices, the number of edges and the boundary operator. To obtain $P_0$ and $P_1$, we use the function $\Delta_{R, \varepsilon}: P \to \{0,1\}$, (Algorithm \ref{alg:dim}),  such that if $p$ has $(R, \varepsilon)$-local structure of a vertex $\Delta_{R, \varepsilon}(p)=0$ and if $p$ $(R, \varepsilon)$-local structure of an edge, $\Delta_{R,\varepsilon}(p)=1$. Then,  $P_0 = \Delta_{R,\varepsilon}^{-1}(0)$ and $P_1 = \Delta_{R,\varepsilon}^{-1}(1)$.

	For each vertex $v \in |G|$, if $\text{deg}(v) \neq 2$, Proposition \ref{prop:0ball} implies that for all $p \in P$ with $\|p- v\| \leq \frac{R}{2}$, $\Delta_{R,\varepsilon}(p)=0$, while if $\text{deg}(v) = 2$, Propositions \ref{prop:0balldeg2big} and \ref{prop:0balldeg2small} imply that $\Delta_{R, \varepsilon}(p) =0$, and Proposition \ref{prop:nostrayverts} implies that if $\|p-v\| > \frac{3R}{2} + 2 \varepsilon$, $\Delta_{R, \varepsilon}(p)=1$.

    \begin{lemma}\label{lem:Q0}
        Let $x\in P_0$ and $\|x-v\|<\frac{3R}{2}+\varepsilon$ for vertex $v$. Then $y\in P_0$ is in the same connected component as $x$ in $\mathfrak{G}_{\frac{3R}{2} + 2\varepsilon}(P_0)$ if and only if $\|y-v\|<\frac{3R}{2}+\varepsilon$.
    \end{lemma}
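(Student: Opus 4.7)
The plan is to exploit the contrapositive of Proposition \ref{prop:nostrayverts}, which says every $p\in P_0$ lies within $\tfrac{3R+\varepsilon}{2}$ of some vertex, together with Assumption \ref{as:embed}(1), which forces distinct vertices to be more than $\tfrac{9R}{2}+6\varepsilon$ apart. Combining these I first would record a ``unique nearest vertex'' principle: if a point $p$ of $P_0$ simultaneously satisfied $\|p-v\|<\tfrac{3R}{2}+\varepsilon$ and $\|p-w\|<\tfrac{3R}{2}+\varepsilon$ for two vertices $v,w$, then the triangle inequality would give $\|v-w\|<3R+2\varepsilon<\tfrac{9R}{2}+6\varepsilon$, contradicting Assumption \ref{as:embed}(1); hence $v=w$, and each such $p$ is associated with a unique vertex.

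For the ``if'' direction, since $P$ is an $\varepsilon$-sample I pick $z_v\in P$ with $\|z_v-v\|\leq \varepsilon$. Because $R>12\varepsilon$ gives $\varepsilon\leq\tfrac{R-\varepsilon}{2}$ (and trivially $\varepsilon\leq 4\varepsilon$), the hypotheses of Propositions \ref{prop:0ball}, \ref{prop:0balldeg2big} and \ref{prop:0balldeg2small} are met in every degree/angle case at $v$, so $z_v\in P_0$. For any $y\in P_0$ with $\|y-v\|<\tfrac{3R}{2}+\varepsilon$ the triangle inequality yields $\|y-z_v\|<\tfrac{3R}{2}+2\varepsilon$, so $y$ is directly adjacent to $z_v$ in $\mathfrak{G}_{\frac{3R}{2}+2\varepsilon}(P_0)$. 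Applying this to both $x$ and $y$ shows both share the connected component of $z_v$.

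For the ``only if'' direction I would take an arbitrary path $x=p_0,p_1,\dots,p_k=y$ in $\mathfrak{G}_{\frac{3R}{2}+2\varepsilon}(P_0)$ and use the contrapositive of Proposition \ref{prop:nostrayverts} to associate each $p_i$ with the unique vertex $v_i$ satisfying $\|p_i-v_i\|\leq \tfrac{3R+\varepsilon}{2}$. Since $\|p_i-p_{i+1}\|\leq\tfrac{3R}{2}+2\varepsilon$, the triangle inequality gives
\[
\|v_i-v_{i+1}\|\leq \tfrac{3R+\varepsilon}{2}+\bigl(\tfrac{3R}{2}+2\varepsilon\bigr)+\tfrac{3R+\varepsilon}{2}=\tfrac{9R}{2}+3\varepsilon,
\]
which is strictly less than $\tfrac{9R}{2}+6\varepsilon$, so Assumption \ref{as:embed}(1) forces $v_i=v_{i+1}$. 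Induction gives $v_0=v_k$, and the uniqueness principle applied to $x$ gives $v_0=v$, so $\|y-v\|=\|y-v_k\|\leq \tfrac{3R+\varepsilon}{2}<\tfrac{3R}{2}+\varepsilon$.

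The only real obstacle is bookkeeping: verifying that $z_v$ genuinely lands in $P_0$ by invoking the correct proposition for whichever degree/angle case describes $v$, and keeping strict versus non-strict inequalities consistent so that the associated vertex really is unique and the final bound on $\|y-v\|$ comes out strict. Once organised, the whole argument reduces to the triangle inequality applied against the vertex-separation bound.
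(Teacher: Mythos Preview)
Your argument is correct. The ``only if'' direction matches the paper's reasoning exactly: both use the contrapositive of Proposition~\ref{prop:nostrayverts} to confine $P_0$ to the balls $B_{\frac{3R+\varepsilon}{2}}(v)$ and then invoke the vertex–separation bound from Assumption~\ref{as:embed}(1) to prevent a single edge of $\mathfrak{G}_{\frac{3R}{2}+2\varepsilon}(P_0)$ from bridging two such balls.

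For the ``if'' direction you take a slightly different and in fact more economical route. The paper chooses, for each of $x$ and $y$, an edge of $|G|$ incident to $v$ that the sample is $\varepsilon$-close to, marks the point $3\varepsilon$ along that edge from $v$, and picks nearby samples $p_x,p_y\in P\cap B_{4\varepsilon}(v)$; it then argues $x$--$p_x$--$p_y$--$y$ is a path. You instead use a single hub sample $z_v\in P$ with $\|z_v-v\|\leq\varepsilon$, verify $z_v\in P_0$ via whichever of Propositions~\ref{prop:0ball}--\ref{prop:0balldeg2small} applies (since $\varepsilon\leq\min\{4\varepsilon,\tfrac{R-\varepsilon}{2}\}$), and connect both $x$ and $y$ to $z_v$ directly by the triangle inequality. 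This avoids the case analysis on which edge $x$ or $y$ sits near and also transparently covers the isolated-vertex case $\deg(v)=0$, which the paper's edge-based construction glosses over. The paper's version, on the other hand, makes more explicit use of the edge structure of $|G|$, which is thematically consistent with how later lemmas are argued. Either way the content is the same triangle-inequality bookkeeping you identified.
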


    \begin{proof}
        By Proposition \ref{prop:nostrayverts} $P_0\subset P \cap\left\{ \bigcup_{v\in V} B_{ \frac{3R}{2} +\varepsilon}(v)\right\}$. Our embedding assumptions require that for vertices $v\neq v'$ we have $\|v-v'\|>\frac{9R}{2} +3\varepsilon$ and hence no points in $P\cap B_{ \frac{3R}{2} +\varepsilon}(v')$ are within $\frac{3R}{2}+\varepsilon$ of those in $B_{\frac{3R}{2} +\varepsilon}(v')$. This means they can not be connected in $\mathfrak{G}_{\frac{3R}{2} + 2\varepsilon}(P_0)$. This implies that the entire connected component containing $x$ must lie in $B_{\frac{3R}{2} +\varepsilon}(v).$  If $\|y-v\|>\frac{3R}{2}+\varepsilon$ then it cannot be in the same connected component as $x$.
    
        We finally wish to show that $\|y-v\|<\frac{3R}{2}+\varepsilon$ implies that $x$ and $y$ are in the same connected component.  Choose vertices $u_y$ and $u_x$ such that $d(y, \overline{u_yv})<\varepsilon$ and $d(x,\overline{u_xv})<\varepsilon$ and let $z_y\in \overline{uv}$ be the point $3\varepsilon$ from $v$ and analogously define $z_x$. As $P$ is an $\varepsilon$-sample of $|G|$ we have samples $p_y$ and $p_x$ such that $\|p_y- z_y\|<\varepsilon$ and $\|p_x- z_x\|<\varepsilon$. Note that $p_y, p_x\in P\cap B_{4\varepsilon}(v)$ and hence by Propositions \ref{prop:0ball} and \ref{prop:0balldeg2big} we know that $p_y, p_x \in P_0$. By construction $\|y - p_y\|, \|p_y - p_x\|$ and $\|p_x - x\|$ are all less that $\frac{3R}{2}+\varepsilon$ and hence $y$ and $x$ are in the same connected  component in  $\mathfrak{G}_{\frac{3R}{2} + 2\varepsilon}(P_0)$.
    \end{proof}

	The above lemma shows the correspondence between vertices in $G$ and connected components in 
	$\mathfrak{G}_{\frac{3R}{2} + 2\varepsilon}(P_0)$. Unfortunately the situation is less clean for the connected components of $\mathfrak{G}_{3\varepsilon}(P_1)$.	Around each vertex $v$ there is a `grey area', in which samples $p$ can be placed in either $P_0$ or $P_1$. Due to the size of this spherical shell, it is possible to obtain connected components in  $\mathfrak{G}_{3\varepsilon}(P_1)$ which contain points only within such a grey area. We devote the next few results to characterising the connected components of $\mathfrak{G}_{3\varepsilon}(P_1)$. We first show that every connected component of $\mathfrak{G}_{3\varepsilon}(P_1)$ is close to only one edge.

	\begin{proposition}\label{prop:P1uv}
        Let $[x]$ be a connected component of $\mathfrak{G}_{3\varepsilon}(P_1)$. Then there exists an edge $\overline{uv}$ such that $d(y, \overline{uv})<\varepsilon$ for all $y\in [x]$.

    \end{proposition}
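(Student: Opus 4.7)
My plan is to induct on path length in $[x]$, viewed as a subgraph of $\mathfrak{G}_{3\varepsilon}(P_1)$. Fix $x_0\in [x]$ and use the $\varepsilon$-sampling condition to pick an edge $\overline{uv}$ with $d(x_0,\overline{uv})\leq \varepsilon$; it then suffices to establish the inductive step: whenever $a,b\in P_1$ are adjacent in $\mathfrak{G}_{3\varepsilon}(P_1)$ and $d(a,\overline{uv})<\varepsilon$, we also have $d(b,\overline{uv})<\varepsilon$. Chaining this along a path from $x_0$ to an arbitrary $y\in[x]$ will prove the proposition.

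For the inductive step I would argue by contradiction. If $d(b,\overline{uv})\geq \varepsilon$ then, since $b$ is an $\varepsilon$-sample point, $d(b,e')\leq \varepsilon$ for some edge $e'\neq \overline{uv}$. If $e'$ shares no vertex with $\overline{uv}$, the triangle inequality along the chain $\overline{uv}\leftarrow a\to b\to e'$ gives $d(\overline{uv},e')\leq 5\varepsilon$, contradicting Assumption \ref{as:embed}(3). Otherwise, after relabelling, $e'=\overline{wv}$. If both $\|a-v\|$ and $\|b-v\|$ exceed $(R-\varepsilon)/2$, Lemma \ref{lem:sepedges} applied with $D=(R-\varepsilon)/2$ forces $\|a-b\|>3\varepsilon$ (the required angle inequality coming out of the lemma is exactly $\Phi(R,\varepsilon)$, which is Assumption \ref{as:embed}(4)), contradicting adjacency. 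Otherwise, without loss of generality $\|b-v\|\leq (R-\varepsilon)/2$, and then Propositions \ref{prop:0ball} and \ref{prop:0balldeg2small} place $b\in P_0$ whenever $\text{deg}(v)\neq 2$ or $\angle uvw\leq \pi/2$, contradicting $b\in P_1$.

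The remaining and genuinely subtle case is $\text{deg}(v)=2$ with $\angle uvw>\pi/2$, because Proposition \ref{prop:0balldeg2big} only forces $P_0$-membership inside the $4\varepsilon$-ball about $v$---strictly smaller than the $(R-\varepsilon)/2$-ball used elsewhere---leaving a genuine annulus where $P_1$-membership is permitted. In this sub-case I argue directly. Both $\|a-v\|,\|b-v\|>4\varepsilon$ (else Proposition \ref{prop:0balldeg2big} again produces a contradiction), and Assumption \ref{as:embed}(1) ensures $u$ is so far from $v$ that the nearest-point projections $\pi_a\in\overline{uv}$ and $\pi_b\in\overline{wv}$ land in the open interiors of their edges. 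Pythagoras then yields $\|\pi_a-v\|,\|\pi_b-v\|>\sqrt{15}\,\varepsilon$, and the law of cosines combined with $\cos\angle uvw<0$ gives $\|\pi_a-\pi_b\|^2>\|\pi_a-v\|^2+\|\pi_b-v\|^2>30\varepsilon^2$; allowing for the $\leq 2\varepsilon$ of total perpendicular offset, $\|a-b\|>(\sqrt{30}-2)\varepsilon>3\varepsilon$, the required contradiction.

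The main obstacle is precisely this final sub-case: the mismatch between the $4\varepsilon$ threshold in Proposition \ref{prop:0balldeg2big} and the $(R-\varepsilon)/2$ thresholds used elsewhere creates an annular region around an obtuse degree-$2$ vertex in which the earlier $P_0$-results are silent, so one has to verify by an explicit Euclidean computation that the obtuse-angle geometry still keeps $P_1$-samples on the two incident edges strictly more than $3\varepsilon$ apart.
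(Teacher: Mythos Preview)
Your argument is correct and follows essentially the same route as the paper: reduce to adjacent pairs in $\mathfrak{G}_{3\varepsilon}(P_1)$, eliminate non-incident edges via Assumption~\ref{as:embed}(3), and handle the shared-vertex case via Lemma~\ref{lem:sepedges} together with the $P_0$-membership Propositions. The one place you diverge is the obtuse degree-$2$ sub-case. There you carry out a bespoke law-of-cosines computation, whereas the paper simply applies Lemma~\ref{lem:sepedges} a second time with $D=4\varepsilon$, after checking numerically that $\pi/2>\arccos(23/32)+2\arcsin(1/4)$; since both $a,b\in P_1$ already forces $\|a-v\|,\|b-v\|>4\varepsilon$ by Proposition~\ref{prop:0balldeg2big}, the lemma's hypotheses are met and the conclusion $\|a-b\|>3\varepsilon$ drops out immediately. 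Your direct calculation is valid (and gives a slightly sharper constant), but it is effectively re-deriving the special case of Lemma~\ref{lem:sepedges} you need, so the paper's version is more economical.
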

    
    \begin{proof}
        Since every sample in $P$ is within $\varepsilon$ of some edge it is sufficient to show that if $p,q \in P_1$ with $d(p, \overline{uv}) \leq \varepsilon$ and $\|p-q\|\leq 3\varepsilon$ then $d(q,\overline{uv})<\varepsilon$.

        As $p \in P_1$, Propositions \ref{prop:0ball}, \ref{prop:0balldeg2big} and \ref{prop:0balldeg2small} imply
		
		\begin{enumerate}
		    \item for all vertices $w \in |G|$ with $\text{deg}(w) \neq 2$, $\|p-w\| >\frac{R-\varepsilon}{2}$,
		    \item for all vertices $w$ with $\text{deg}(w)=2$, $\|p - w\| \geq 4\varepsilon$.
		\end{enumerate}

		Without loss of generality, assume $\|p-v\| \leq \|p-u\|$. By Assumptions \ref{as:embed} (3) for all edges $\overline{xy}$ with $x,y$ distinct from $u,v$, $d(\overline{uv}, \overline{xy}) >5\varepsilon$. Hence, $d(p,\overline{xy}) > 4\varepsilon$, and for any sample $q$ with $d(q, \overline{xy}) \leq \varepsilon$, $\|p-q\| > 3 \varepsilon$. If $\text{deg}(v) \neq 2$, then $\|p-v\| > \frac{R-\varepsilon}{2}$, and as $|G|$ satisfies Assumptions \ref{as:embed} (4), Lemma \ref{lem:sepedges} implies $\|p-q\| > 3 \varepsilon$ for all $q \in P_1$ with $d(q,\overline{uv}) > \varepsilon$.
		
		Now assume that $\text{deg}(v) =2$, and  consider another edge $\overline{wv}$. For $\Phi(R,\varepsilon) \leq \angle uvw < \frac{\pi}{2}$ we can apply Lemma \ref{lem:sepedges} with $D=\frac{R-\varepsilon}{2}$  to see that for all $q \in P_1$ with $d(q,\overline{wv})\leq \varepsilon$, $\|q,-p\| > 3\varepsilon$.
		For $\frac{\pi}{2} \leq \angle uvw \leq \Psi(R,\varepsilon)$, we apply Lemma \ref{lem:sepedges} with $D=4\varepsilon$ and observe that $\pi/2>\arccos(23/32) +2\arcsin(1/4)$ to conclude that $d(q,\overline{wv})\leq \varepsilon$, $\|q-p\| > 3\varepsilon$. 
    \end{proof}

    There can be multiple connected component in $\mathfrak{G}_{3\varepsilon}(P_1)$ near the same edge. However there will only be one which contains a sample near the midpoint of the edge. We wish to treat these differently and so we will give them a name.

    \begin{definition}
        We say that the connected component of $\mathfrak{G}_{3\varepsilon}(P_1)$ \emph{spans} the edge $\overline{uv}$ if it contains a point within $\varepsilon$ of the midpoint of $\overline{uv}$. Without reference to the specific edge $\overline{uv}$ we say that the component is \emph{spanning}.
    \end{definition}

    \begin{proposition}\label{prop:P1}
        Let $\overline{uv}$ be an edge in $G$. There exists a unique connected component $A_{\overline{uv}}$ which spans $\overline{uv}$. $A_{\overline{uv}}$ contains samples in both $B_{ \frac{3R+5\varepsilon}{2}}(u)$ and $B_{\frac{3R+5\varepsilon}{2}}(v)$.

        If $[x]\neq A_{\overline{uv}}$ is a connected component in $\mathfrak{G}_{3\varepsilon}(P_1)$ within $\varepsilon$ of $\overline{uv}$ then either $[x]\subset B_{\frac{3R+\varepsilon}{2}}(u)$ or $[x]\subset B_{\frac{3R+\varepsilon}{2}}(v)$.
    \end{proposition}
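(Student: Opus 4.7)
The plan handles the three claims using walks of $\varepsilon$-spaced samples along $\overline{uv}$ combined with Proposition \ref{prop:nostrayverts}.

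For existence, uniqueness, and reach to the endpoints: let $m$ be the midpoint of $\overline{uv}$. Assumption \ref{as:embed} (1) gives $\|m-u\|=\|m-v\| > 9R/4+3\varepsilon > (3R+\varepsilon)/2$ and Assumption \ref{as:embed} (2) gives $\|m-w\| > 3R/2+4\varepsilon > (3R+\varepsilon)/2$ for every other vertex $w$, so any sample $p_m$ within $\varepsilon$ of $m$ (supplied by the $\varepsilon$-sample property) lies in $P_1$ by Proposition \ref{prop:nostrayverts}. Two such samples are within $2\varepsilon$ of each other, so they lie in a single $\mathfrak{G}_{3\varepsilon}(P_1)$-component: the unique spanning component $A_{\overline{uv}}$. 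To reach $u$, chain points $t_0 = m, t_1, \ldots, t_N$ along $\overline{uv}$ with consecutive spacing $\leq \varepsilon$ and $\|t_N-u\|$ just beyond $(3R+3\varepsilon)/2$; samples $p_i$ within $\varepsilon$ of each $t_i$ satisfy $\|p_i-p_{i+1}\| \leq 3\varepsilon$ and $\|p_i-w\| > (3R+\varepsilon)/2$ for every vertex $w$, placing them in $P_1$ by Proposition \ref{prop:nostrayverts} and hence in $A_{\overline{uv}}$. The terminal $p_N$ lies in $B_{(3R+5\varepsilon)/2}(u)$; the argument toward $v$ is symmetric.

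For the confinement of non-spanning components, I argue by contradiction. Suppose $[x] \neq A_{\overline{uv}}$ satisfies $[x]\not\subset B_{(3R+\varepsilon)/2}(u)$ and $[x]\not\subset B_{(3R+\varepsilon)/2}(v)$. Assumption \ref{as:embed} (1) forces $\|u-v\| > 3R+\varepsilon$, so no sample lies in both balls, and any sample in $B_{(3R+\varepsilon)/2}(u)\setminus B_{(3R+\varepsilon)/2}(v)$ is at distance $> 3R/2+5\varepsilon > 3\varepsilon$ from any sample in $B_{(3R+\varepsilon)/2}(v)\setminus B_{(3R+\varepsilon)/2}(u)$, so the two sets are not $\mathfrak{G}_{3\varepsilon}$-adjacent. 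Hence either $[x]$ contains a sample $y$ outside both balls directly (i.e., $\|y-u\|, \|y-v\|\geq (3R+\varepsilon)/2$), or any path in $[x]$ linking samples violating the two inclusions must pass through such a $y$. Walking from $y$ toward $m$ (using the chain construction of the previous paragraph once in the safe zone $\{t : \|t-u\|, \|t-v\|>(3R+3\varepsilon)/2\}$) exhibits $y\in A_{\overline{uv}}$, the desired contradiction.

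The technical heart is the first bridging hop from $y$ into the safe zone. The projection $\tilde{y}$ of $y$ onto $\overline{uv}$ satisfies $\|\tilde{y}-u\|\geq \sqrt{((3R+\varepsilon)/2)^2-\varepsilon^2}$, leaving a grey strip of width at most slightly above $\varepsilon$ (about $1.03\varepsilon$ at the extremal $R=12\varepsilon$, decreasing to $\varepsilon$ as $R/\varepsilon\to\infty$). Pick $t^*\in\overline{uv}$ just past the safe-zone threshold and any sample $p^*$ with $\|p^*-t^*\|\leq\varepsilon$; Proposition \ref{prop:nostrayverts} gives $p^*\in P_1$. An orthogonal decomposition of $y-p^*$ along $\overline{uv}$ combined with Cauchy--Schwarz yields
\[
\|y-p^*\|^2 \leq 2\varepsilon^2+\|\tilde{y}-t^*\|^2+2\varepsilon\sqrt{\varepsilon^2+\|\tilde{y}-t^*\|^2},
\]
which under $R\geq 12\varepsilon$ is strictly less than $9\varepsilon^2$; hence $y$ and $p^*$ are $\mathfrak{G}_{3\varepsilon}$-adjacent and the walk completes to $p_m$.
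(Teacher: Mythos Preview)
Your proof is correct and follows the same skeleton as the paper's: existence, uniqueness, and endpoint-reach are handled identically via an $\varepsilon$-spaced chain along $\overline{uv}$ together with Proposition~\ref{prop:nostrayverts}. For the confinement of non-spanning components the paper runs the contrapositive a little more cleanly: instead of your bridging hop from $y$ to a sample $p^*$, it notes that $[x]\neq A_{\overline{uv}}$ forces $d([x],t_i)>2\varepsilon$ for \emph{every} chain point $t_i$ (otherwise the $\varepsilon$-close sample $z_i\in A_{\overline{uv}}$ would merge the components), and in particular $\|y-t_0\|>2\varepsilon$ with $t_0$ at distance $\tfrac{3R+3\varepsilon}{2}$ from $u$; the same orthogonal decomposition you invoke then gives $\|y-u\|<\tfrac{3R+\varepsilon}{2}$ directly. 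Your Cauchy--Schwarz estimate is this same geometric computation recast as a distance from $y$ to a nearby sample rather than to the chain point $t_0$, so the two arguments are equivalent in content; the paper's version simply sidesteps the explicit numerical check at $R=12\varepsilon$.
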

    
    \begin{proof}
        Let $m$ denote the midpoint of $\overline{uv}$.
    
        Let $t_0, t_1, \ldots t_{2M}$ be consecutive points along $\overline{uv}$ with $\|t_i- t_{i+1}\|<\varepsilon$, $\|t_0- u\|=\frac{3R+3\varepsilon}{2}$, $t_M=m$,  and $\|t_{2M}- v\|=\frac{3R+3\varepsilon}{2}$. There must be $z_0, z_1 z_2, \ldots z_M\in P$ such that $\|t_i- z_i\|<\varepsilon$. Observe that $\|z_i- u\|>\frac{3R+\varepsilon}{2}$ and $\|z_i- v\|>\frac{3R+\varepsilon}{2}$ and so by Proposition \ref{prop:nostrayverts} all the $z_i$ are in $P_1$. Since $\|z_i- z_{i+1}\|<3\varepsilon$ we know that all the $z_i$ lie in the same connected component of $\mathfrak{G}_{3\varepsilon}(P_1)$ which spans $\overline{uv}$ as $z_M$ is within $\varepsilon$ of $m$. 
    
        To see this connected component is unique we need only observe that any pair of samples in $P_1$ both within $\varepsilon$ of $m$ are within $3\varepsilon$ of each other and hence lie in the same connected component.
    Denote this unique connected component by $A_{\overline{uv}}$.
    
        Observe that $\|u- z_0\|<\frac{3R+3\varepsilon}{2}+\varepsilon$ and $\|v- z_{2M}\|<\frac{3R+3\varepsilon}{2}+\varepsilon$.

        Now suppose that $[x]\neq  A_{\overline{uv}}$ is a connected component in $\mathfrak{G}_{3\varepsilon}(P_1)$ within $\varepsilon$ of $\overline{uv}$. Since $[x]\neq A_{\overline{uv}}$, we have $d([x], t_i)>2 \varepsilon$ for all $i$ and hence 
        
        \begin{equation*}
            [x]\subset B_{\frac{3R+\varepsilon}{2}}(u)\cup B_{\frac{3R+\varepsilon}{2}}(v).
        \end{equation*}
        
        As $\|u-v\|>\frac{3R+\varepsilon}{2} + \frac{3R+\varepsilon}{2}+3\varepsilon$ we further conclude that $[x]$ is contained in only one of $B_{\frac{3R+\varepsilon}{2}}(u)$ or $B_{\frac{3R+\varepsilon}{2}}(v)$.
    \end{proof}

    In light of Proposition \ref{prop:P1} we modify our partition of $P$, into $\widetilde{P_0}$ and $\widetilde{P_1}$, see Definition \ref{def:tildes} and Algorithm \ref{alg:abstract}. We effectively want to move any points in $P_1$ that are not contained in a spanning connected component into $P_0$. 
	
	\begin{definition}[$\widetilde{P_0}$ and $\widetilde{P_1}$]\label{def:tildes}
	    Let $P$ be an $\varepsilon$-sample of an embedded graph $|G|$ satisfying Assumptions \ref{as:embed}, and consider the sets $P_0$ and $P_1$ from Definition \ref{defn:p0p1}. Let $Q_0$ be the connected components of $\mathfrak{G}_{\frac{3R}{2} + 2\varepsilon}(P_0)$, and $Q_1$ the connected components of $\mathfrak{G}_{3\varepsilon}(P_1)$, and define
	        $f: Q_1 \to \{0,1\}$ by
	    $f([q])=0$ when there is only a single connected component $[p]\in Q_0$ such that $d([p],[q])<3\varepsilon$, and $f([q])=1$ otherwise. 
	    
	    We define $\widetilde{P_0}\coloneqq P_0 \cup \left(\bigcup_{f([x])=0} [x]\right)$ and $\widetilde{P_1}\coloneqq  \left(\bigcup_{f([x])=1} [x]\right)$.
    \end{definition}
   
    \begin{lemma}\label{lem:tilde}
        Let $[x]\in Q_1$. Then $f([x])=1$ if and only is $[x]$ spans an edge, and $f([x])=0$ if and only if $[x]\subset B_{\frac{3R+\varepsilon}{2}}(v)$ for some vertex $v$. 
    \end{lemma}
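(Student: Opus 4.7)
The plan is to read off the conclusion from the structural dichotomy of Proposition \ref{prop:P1uv} and Proposition \ref{prop:P1}: every $[x] \in Q_1$ is within $\varepsilon$ of a unique edge $\overline{uv}$, and is either the spanning component $A_{\overline{uv}}$ or satisfies $[x] \subset B_{\frac{3R+\varepsilon}{2}}(w)$ for some endpoint $w \in \{u,v\}$. Since these two alternatives are mutually exclusive and exhaustive for $Q_1$, the two biconditionals in the lemma reduce to proving the two forward implications: $[x] = A_{\overline{uv}}$ implies $f([x])=1$, and $[x]\subset B_{\frac{3R+\varepsilon}{2}}(v)$ implies $f([x])=0$.

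For the spanning case $[x]=A_{\overline{uv}}$, the strategy is to produce two distinct components of $Q_0$, namely the anchor components $[q_u]$ and $[q_v]$ characterised by Lemma \ref{lem:Q0}, each within $3\varepsilon$ of $A_{\overline{uv}}$. I construct a chain $w_0,\dots,w_k$ along $\overline{uv}$ with $\|w_i-w_{i+1}\|<\varepsilon$, where $\|w_0-u\|$ is small enough (for example $3\varepsilon$) that any sample within $\varepsilon$ of $w_0$ is forced into $P_0$ by Propositions \ref{prop:0ball}, \ref{prop:0balldeg2big} and \ref{prop:0balldeg2small}, and where $w_k$ coincides with the point $t_0$ from the proof of Proposition \ref{prop:P1}, so that nearby samples are in $P_1$ by Proposition \ref{prop:nostrayverts}. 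Choosing samples $s_i$ with $\|s_i-w_i\|<\varepsilon$, the largest index $j$ with $s_j\in P_0$ produces $\|s_j-s_{j+1}\|<3\varepsilon$, $s_j\in P_0$, $s_{j+1}\in P_1$. The tail $s_{j+1},\dots,s_k$ lies entirely in $P_1$ with consecutive distances $<3\varepsilon$, and $s_k$ is within $3\varepsilon$ of the sample $z_0\in A_{\overline{uv}}$, placing $s_{j+1}\in A_{\overline{uv}}$. A symmetric construction near $v$ produces a second transition pair, and Assumption \ref{as:embed}(1) shows that the resulting components $[q_u]$ and $[q_v]$ are distinct. Hence $f(A_{\overline{uv}})=1$.

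For the non-spanning case $[x]\subset B_{\frac{3R+\varepsilon}{2}}(v)$, I run the analogous chain argument but walking from a sample $x\in[x]$ along $\overline{uv}$ towards $v$ until within $3\varepsilon$ of $v$; the transition now gives $s_j\in [x]$ and $s_{j+1}\in P_0$ within $3\varepsilon$ of $s_j$, and $s_{j+1}$ lies in $[q_v]$. For the uniqueness half, suppose $[p]\in Q_0$ has $d([p],[x])<3\varepsilon$: picking $p\in[p]$ and $x'\in[x]$ witnessing this and using $\|x'-v\|<\frac{3R+\varepsilon}{2}$ yields $\|p-v\|<\frac{3R+7\varepsilon}{2}$ by the triangle inequality. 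Lemma \ref{lem:Q0} makes $[p]$ the $Q_0$-component of some vertex $v'$ with $\|p-v'\|<\frac{3R}{2}+\varepsilon$. Assumption \ref{as:embed}(1) forces $v'=v$, since otherwise
\begin{equation*}
\|p-v'\|\geq\|v-v'\|-\|p-v\|>\tfrac{9R}{2}+6\varepsilon-\tfrac{3R+7\varepsilon}{2}=3R+\tfrac{5\varepsilon}{2},
\end{equation*}
which exceeds $\frac{3R}{2}+\varepsilon$. Thus $[p]=[q_v]$, giving exactly one nearby $Q_0$-component and $f([x])=0$.

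The main obstacle in both cases is matching the transitional $P_0$-sample to the correct $Q_0$-component via Lemma \ref{lem:Q0}. The direct bound coming from the chain construction, of order $\|s-v\|\leq\frac{3R+3\varepsilon}{2}$, is slightly weaker than the strict inequality $<\frac{3R}{2}+\varepsilon$ that Lemma \ref{lem:Q0} requires. The resolution is to leverage the contrapositive of Proposition \ref{prop:nostrayverts} — any sample in $P_0$ is within $\frac{3R+\varepsilon}{2}$ of some vertex — and then eliminate all vertices other than $v$ (or $u$) using Assumption \ref{as:embed}(1) (wide spacing between distinct vertices) and Assumption \ref{as:embed}(2) (separation of non-incident vertex-edge pairs).
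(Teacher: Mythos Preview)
Your argument is correct and follows the same overall architecture as the paper: invoke the spanning/non-spanning dichotomy of Propositions~\ref{prop:P1uv} and~\ref{prop:P1}, then verify the two forward implications separately, identifying the relevant $Q_0$-components via Lemma~\ref{lem:Q0} and separating them using Assumption~\ref{as:embed}(1).

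The one genuine methodological difference is in how you produce a $P_0$-sample within $3\varepsilon$ of the spanning component. The paper uses an extremal argument: take $x_u\in[x]$ minimising $\|x_u-u\|$, pick a sample $p_u$ in $P$ that is both within $3\varepsilon$ of $x_u$ and strictly closer to $u$, and observe that $p_u\in P_0$ since otherwise $p_u\in[x]$ would contradict minimality of $x_u$. You instead run a chain $s_0,\dots,s_k$ from the $4\varepsilon$-neighbourhood of $u$ out to the anchor point $t_0$ of Proposition~\ref{prop:P1} and locate the last $P_0$--$P_1$ transition. The paper's extremal trick is slicker and avoids referring to the internal construction of Proposition~\ref{prop:P1}; your chain argument is more constructive and, notably, lets you handle the non-spanning case symmetrically (walking inward from $x$ to $v$) to exhibit \emph{at least one} nearby $Q_0$-component---a point the paper's proof leaves implicit. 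You are also more careful than the paper about the gap between the crude bound $\|s_j-u\|\lesssim\tfrac{3R+5\varepsilon}{2}$ and the strict $\tfrac{3R}{2}+\varepsilon$ required by Lemma~\ref{lem:Q0}, closing it via the contrapositive of Proposition~\ref{prop:nostrayverts} together with Assumption~\ref{as:embed}(1); the paper invokes Lemma~\ref{lem:Q0} directly without spelling this out.
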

    
    \begin{proof}
        If $[x]$ spans an edge $\overline{uv}$ then by Proposition \ref{prop:P1uv} we know that $[x]$ contains samples in both $B_{\frac{3R+5\varepsilon}{2}}(u)$ and $B_{ \frac{3R+5\varepsilon}{2}}(v)$. Let $x_u\in [x]$ be the sample closest to $u$. Note that $\|x_u-u\|\leq \frac{3R+5\varepsilon}{2}$. There must be some sample $p_u\in P$ with $\|p-u\|\in < \|u- x_u\|$ and $\|p-x_u\|<3\varepsilon$. Now $p\in P_0$ as otherwise it contradicts $x_u$ being the closest sample to $u$ inside $[x]$. By Lemma \ref{lem:Q0}, $[p_u]\in Q_0$ is contained in $B_{\frac{3R +\varepsilon}{2}}(u)$. 

        Similarly we can show that there some $x_v\in [x]$ and $p_v\in P_0$ with $\|x_v- p_v\|\leq 3\varepsilon$ and $[p_v]\in Q_0$ contained in $B_{\frac{3R+\varepsilon}{2}}(v).$ By Lemma \ref{lem:Q0} $[p_u]$ and $[p_v]$ are distinct and hence $f([x])=1$.

        If $[x]$ does not span any edge then by Proposition \ref{prop:P1uv} we know there is a vertex $v$ such that $[x]\subset B_{\frac{3R+\varepsilon}{2}}(v)$. We then can appeal to Lemma \ref{lem:Q0} to say that there is only one connected component in $Q_0$ within $3\varepsilon$ of $[x]$. 
    \end{proof}

    Let $\widetilde{Q_0}$ denote the connected components of $\mathfrak{G}_{\frac{3R}{2} + 2\varepsilon}(\widetilde{P_0})$ and let $\widetilde{Q_1}$ denote the connected components of $\mathfrak{G}_{3\varepsilon}(\widetilde{P_1})$. We will see that characterisation of the elements of $\widetilde{Q_0}$ is the same as that of $Q_0$. The elements of $\widetilde{Q_1}$ are exactly those connected components in that span some edge.
 
    \begin{theorem}\label{thm:bij}
        For each vertex $v$ there exists a unique connected component $[x]\in\mathfrak{G}_{\frac{3R}{2} + 2\varepsilon}(\widetilde{P_0})$  such that $[x]\subset B_{\frac{3R}{2} + 2\varepsilon}(v)$. Every connected component of $\mathfrak{G}_{\frac{3R}{2} + 2\varepsilon}(\widetilde{P_0})$ is of this form.

        For each edge $\overline{uv}$ there exists a unique connected component $[x]\in\mathfrak{G}_{3\varepsilon}(\widetilde{P_1})$  such that $[x]$ spans $\overline{uv}$. Furthermore every connected component of $\mathfrak{G}_{\frac{3R}{2} + 2\varepsilon}(\widetilde{P_1})$ is of this form.
    \end{theorem}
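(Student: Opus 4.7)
The plan is to read off both halves of the theorem from the earlier classification lemmas, and then do a short bookkeeping argument to verify that the redistribution of points in Definition \ref{def:tildes} does not create any spurious components or merge components that should stay separate. The key structural inputs are Lemma \ref{lem:Q0} (which pairs elements of $Q_0$ with vertices), Proposition \ref{prop:P1} (which pairs spanning components of $Q_1$ with edges), and Lemma \ref{lem:tilde} (which tells us precisely which components of $Q_1$ are moved into $\widetilde{P_0}$ and which stay in $\widetilde{P_1}$).

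For the edge statement I would argue as follows. By Lemma \ref{lem:tilde}, $\widetilde{P_1}$ is exactly the union of the spanning components of $\mathfrak{G}_{3\varepsilon}(P_1)$, and by Proposition \ref{prop:P1} the spanning components are in bijection with the edges of $G$, with the unique spanning component for $\overline{uv}$ being $A_{\overline{uv}}$. Since $\widetilde{P_1}\subseteq P_1$, the graph $\mathfrak{G}_{3\varepsilon}(\widetilde{P_1})$ is the induced subgraph of $\mathfrak{G}_{3\varepsilon}(P_1)$ on this vertex subset, so each $A_{\overline{uv}}$ remains connected and two different $A_{\overline{uv}}$, $A_{\overline{u'v'}}$ cannot merge (their points were already at Euclidean distance exceeding $3\varepsilon$ in $\mathfrak{G}_{3\varepsilon}(P_1)$). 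This gives existence, uniqueness, and the fact that every component of $\mathfrak{G}_{3\varepsilon}(\widetilde{P_1})$ is of the desired form.

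For the vertex statement I would combine Lemma \ref{lem:Q0} with the description of the extra pieces pushed into $\widetilde{P_0}$. Lemma \ref{lem:Q0} implies that, within $P_0$, the points inside $B_{\frac{3R+\varepsilon}{2}}(v)$ form a single connected component $[p_v]$ of $\mathfrak{G}_{\frac{3R}{2}+2\varepsilon}(P_0)$, and these are all such components (applying Propositions \ref{prop:0ball}, \ref{prop:0balldeg2big}, \ref{prop:0balldeg2small}, \ref{prop:nostrayverts} to ensure every point of $P_0$ is within $\frac{3R+\varepsilon}{2}$ of some vertex). Each component $[x]\in Q_1$ with $f([x])=0$ lies, by Lemma \ref{lem:tilde}, inside $B_{\frac{3R+\varepsilon}{2}}(v)$ for a unique $v$, and the definition of $f$ forces the single $Q_0$ component within $3\varepsilon$ of $[x]$ to be exactly $[p_v]$; consequently $[x]$ merges with $[p_v]$ (and nothing else) when we form $\mathfrak{G}_{\frac{3R}{2}+2\varepsilon}(\widetilde{P_0})$. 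The resulting enlarged component still lies inside $B_{\frac{3R}{2}+2\varepsilon}(v)$.

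The only remaining obstacle, and the main place where the geometric assumptions enter, is to verify that enlarged components at distinct vertices stay separated at threshold $\frac{3R}{2}+2\varepsilon$. This is where I would invoke Assumption \ref{as:embed}(1): for distinct vertices $v,v'$ we have $\|v-v'\|>\frac{9R}{2}+6\varepsilon$, so any two points in $B_{\frac{3R}{2}+2\varepsilon}(v)$ and $B_{\frac{3R}{2}+2\varepsilon}(v')$ are at Euclidean distance strictly greater than $\frac{9R}{2}+6\varepsilon-2\bigl(\frac{3R}{2}+2\varepsilon\bigr)=\frac{3R}{2}+2\varepsilon$, so no edge of $\mathfrak{G}_{\frac{3R}{2}+2\varepsilon}(\widetilde{P_0})$ bridges them. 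This yields uniqueness of the vertex component and completes the proof.
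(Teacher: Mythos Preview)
Your proposal is correct and follows essentially the same route as the paper's proof: both arguments reduce the vertex half to the containment $\widetilde{P_0}\subset\bigcup_v B_{\frac{3R+\varepsilon}{2}}(v)$ (from Proposition \ref{prop:nostrayverts} and Lemma \ref{lem:tilde}) together with the separation from Assumption \ref{as:embed}(1), and both read the edge half directly off Lemma \ref{lem:tilde} and Proposition \ref{prop:P1}. Your write-up is simply a more detailed unpacking of what the paper compresses into ``effectively repeat the proof of Lemma \ref{lem:Q0}''.
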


    \begin{proof}
        From Proposition \ref{prop:nostrayverts} and Lemma \ref{lem:tilde} we know that $\widetilde{P_0}\subset \bigcup_{v} B_{\frac{3R+\varepsilon}{2}}(v).$ We can then effectively repeat the proof of Lemma \ref{lem:Q0} to show the analogous result for $\widetilde{P_0}$. 
    
        To see the bijection between the vertices of $G$ and $\widetilde{Q_0}$ observe that every sample within $4\varepsilon$ of some vertex is in $P_0\subset \widetilde{P_0}$ and hence every vertex corresponds to some connected component, and observe that by Lemma \ref{lem:tilde} all points in $\widetilde{P_0}$ lie within $\frac{3R+\varepsilon}{2}$ of some vertex. 
    
        The characterisation for connected components of $\mathfrak{G}_{3\varepsilon}(\widetilde{P_1})$ follows directly from Proposition \ref{prop:P1uv} and Lemma \ref{lem:tilde}.
    \end{proof}

    Define the map $F_0: \widetilde{Q_0} \to V$ by $F_0([x])=\text{argmin}_{v\in V}\{d([x],v)\}$ and $F_1:\widetilde{Q_1}\to E$  by $F_1([x])=\text{argmin}_{\overline{uv}\in E}\{d([x], \text{midpt}(\overline{uv})\}$.

    That $F_0$ and $F_1$ are well defined bijections follows directly from Theorem \ref{thm:bij}. From Proposition \ref{prop:P1uv} we further can say that if $[q]\in \widetilde{Q_1}$ and $[x]\in \widetilde{Q_0}$ then the single linkage distance between $[q]$ and $[x]$ is less than $3\varepsilon$ if and only if $F_0([x])\in \partial_G(F_1([q])$.

	\begin{algorithm}\caption{$\Delta_{R, \varepsilon}(p)$}\label{alg:dim}
    		\KwData{An $\varepsilon$-dense sample $P$ of an embedded graph $|G|$, a point $p \in P$.} \KwResult{0 if $p$ has local structure of a vertex, 1 if $p$ has local structure of an edge. }
    		\Begin{
      			$\mathcal{G}_p\longleftarrow \{ q \in P \mid \|p-q\| \le R+\varepsilon\}$\;
				connect $q, q' \in \mathcal{G}_p$ if $\|q-q'\| \le 3 \varepsilon$\;
      			\If {$\mathcal{G}_p$ is disconnected}{\Return {1}}
      			\Else {remove $q \in \mathcal{G}_p$ if $\|p-q\| \le R -\varepsilon$\;
            	    \If{number of connected components in $\mathcal{G}_p$ is not $2$}{\Return{0}}
            		\Else{find the midpoints $q_1, q_2$ of the connected components $c_1$ and $c_2$\;
    			    \If{$\langle q_1 -p, q_2 -p\rangle > -R^2 +2 R\varepsilon - 7\varepsilon^2$}{\Return {0}}
              		    \Else{\Return {1}}
            		}
    			}	
			}
 		\end{algorithm}
	
	\begin{algorithm}\caption{Abstract Structure}\label{alg:abstract}
    		\KwData{Partition of $P$ into $P_0$ and $P_1$.} \KwResult{Partitions $\widetilde{P_0}, \widetilde{P_1}$, abstract graph $G=(E,V)$.}
    		\Begin{
			$E \longleftarrow \emptyset$\;
			$V \longleftarrow \emptyset$\;
			$\widetilde{P_0} \longleftarrow P_0$\;
			$\widetilde{P_1} \longleftarrow P_1$\;
      			\For{connected components $[p] \in \mathfrak{G}_{\frac{3R}{2}+2\varepsilon}(P_0)$}{add $[p]$ to $V$}
			\For{connected components $[q] \in \mathfrak{G}_{3\varepsilon}(P_1)$}{$B_q\longleftarrow \emptyset$\;
				\For{$[p] \in V$}{
					\If{$\min_{p' \in [p], q' \in [q]}\|p'-q'\| \leq 3\varepsilon$}{add $[p]$ to $B_{q}$}}}
				\If{size$(B_q)=1$}{add all $q'  \in [q]$ to $\widetilde{P_0}$ and remove them from $\widetilde{P_1}$}
				\Else{add $B_q$ to $E$}
                }	
     		\Return{$\widetilde{P_0}, \widetilde{P_1}, V,E$}
  	\end{algorithm}

  	\begin{algorithm}\caption{Expectation Maximisation for Vertex Location Prediction}\label{alg:vertex}
		\KwData{$|P|$ data points in $n$ dimensions, $N_0 + N_1 = N$ many strata pieces.} 
		\KwResult{Predicted embedded graph vertex locations.}
		\KwIn{Abstract graph structure.}
		\Begin{
			Initialise vertex locations $V$ \;
			Initialise $|P| \times N$ strata assignment matrix $A$ \;
			\For{$s_i$ in strata pieces $S = V \cup E$, $x_j$ in data points}{
				\If{$x_j \in s_i$}{
					$A_{i,j} \longleftarrow 1$}
				\Else{$A_{i,j} \longleftarrow 0$ }
			
			}
			assign an error threshold $\sigma \in \mathbb{R}_+$\;
			Initialise $\pi_i = \frac{\sum_i A_{i,j}}{\sum_{i,j} A_{i,j}}$\;
		
			\For{iterations in EM-iterations}{
			\For{$s_i$ in strata pieces $S = V \cup E$, $x_j$ in data points}{
				assign $A_{i,j} =  \mathbb{E}(1_{Z_j = 1} | X_j = x_j )$ through \eqref{Eq:A} \;
			}
			assign $\pi_i = \frac{\sum_i A_{i,j}}{\sum_{i,j} A_{i,j}}$\;
			assign $V = \arg \min_{V} V \rightarrow C(V,\Pi ; \sigma)$ \eqref{Eq:cost} through a hill climbing optimiser such as gradient-descent\;
			}
		}
	\end{algorithm}
	
\section{Vertex Prediction}\label{sec:modelling}
    Thus far, the focus has been on finding the abstract structure of an embedded graph $|G|$. We now aim to form a numerical scheme to estimate the vertex locations of $|G| \subset \mathbb{R}^n$. 
    In \cite{modsim2019}, a non-linear least-squares method was proposed and used for embedded graph reconstruction. Empirical observation of this method showed vertex predictions were often not contained in an $\varepsilon$-data sample of the \textit{true} embedded graph. 
    A point of difficulty here was that data that should belong to a one-dimensional strata piece was often assigned to a zero-dimensional strata when nearby a vertex location. 
    We utilise an Expectation-Maximisation (EM) algorithm which updates both the predicted vertex locations, and their strata assignments to correct this issue. 
    To do this we design a likelihood function with latent variables for strata assignment so that we may reconstruct a probability measure over the embedded graph from which our data is sampled. 
    Ideally, we would reconstruct a measure $\nu$ whose support is the embedded graph. 
    Recorded data has errors and makes it computationally infeasible to reconstruct $\nu$ directly. 
    Instead, we will formulate an approximating measure $\nu_{\delta}$ which satisfies:

    \begin{enumerate}
    	\item $\nu_{\delta}$ is equivalent to Lebesgue measure,
	    \item $\text{supp} (\lim_{\delta \rightarrow 0} \nu_{\delta} ) = |G|$, 
    \end{enumerate} 
    
    where the limit is meant in the weak sense. The first assumption gives robustness to measurement errors, and the second ensures that in ideal circumstances we form a measure that is supported on the embedded graph. 
    There are many measures which obey these conditions, we choose a Gaussian convolution model for each strata piece and combine all the strata pieces together through a categorical mixture model.

\subsection{Embedded Graph Model}

    Let $(\Omega, \mathcal{F} , \mu)$ be a probability space, that is $\Omega$ is a set, $\mathcal{F}$ is a $\sigma$-algebra of sets from $\Omega$, and $\mu : \mathcal{F} \mapsto [0,1]$ is a normalised measure.

    \begin{definition}
        Given a probability space $(\Omega, \mathcal{F} , \mu)$ and a field with a $\sigma$-algebra $\mathcal{B}$, a measurable function $f: (\Omega, \mathcal{F} , \mu) \mapsto (\mathbb{F}, \mathcal{B})$ is a \emph{random variable}. A vector valued random element is a vector valued measurable function $\tilde{f} : (\Omega, \mathcal{F} , \mu) \mapsto  (\mathbb{R}^n, \mathcal{B}(\mathbb{R}^n) )$ given through $\tilde{f} = (f_1, \dots, f_n)$ where each of the $f_i$ are random variables. 
    
        The \emph{expectation} of a random variable is the integral, $\mathbb{E}(f) \coloneqq \int_{\Omega} f d\mu$. Given a sub-$\sigma$-algebra $C \subset \mathcal{F}$, the \emph{conditional expectation} of a random variable $f$, $\mathbb{E}(f|C) \in L^2(\Omega, \mathcal{F} , \mu)$, is the unique function that satisfies
        
        \begin{align*}
            \int_B \mathbb{E}(f|C) d\mu = \int_B f d\mu,
        \end{align*}
        
        for all $B \in C$. The expectation and conditional expectation of a vector valued random element $\tilde{f} = (f_1, \dots, f_n)$ is defined component-wise through each of the random variables $f_i$, that is 
        
        \begin{align}
            \mathbb{E}(\tilde{f} |C) \coloneqq ( \mathbb{E}( f_1 |C), \dots, \mathbb{E}(f_n |C) )
        \end{align}
        
        for all $C \in \mathcal{B}(\mathbb{R}^n)$. 
    \end{definition}

    Above, we have adopted the standard notation $\mathcal{B}(\mathbb{R}^n)$ for the Borel-$\sigma$-algebra generated by the open sets in the standard topology on $\mathbb{R}^n$. Let $X_j: (\Omega, \mathcal{F} , \mu) \mapsto (\mathbb{R}^n, \mathcal{B}(\mathbb{R}^n)  )$ be vector valued random elements and $Z_j : (\Omega, \mathcal{F} , \mu) \mapsto ([N], 2^{[N]})$ be random variables for $j \in \{1,\dots,{\color{red} |P|}\}$, where $[N] \coloneqq \{1,\dots, N \}$, $n$ is the dimension of the space to which the graph is embedded, $|P|$ is the amount of recorded data points, and $N$ the number of strata in $|G|$. Let $N_0, N_1 \in \mathbb{N}_0$ be $N_0$ and $N_1$ are the number of zero and one dimensional strata respectively, and so $N = N_0 + N_1$.
    
    Enumerate the set of vertex locations as $V \coloneqq \{v_i\}_{i=1}^{N_0}$. For each $i \in \{N_0+1, \dots, N\}$ assign the pairing $v_{i_1},v_{i_2} \in \{v_i\}_{i=1}^{N_0}$ to be the vertices that form the boundary $i^{th}$ strata piece. Assume that for each $j$ the $Z_j$ are independent and identically distributed, that each $X_j$ is independent of $X_i$ and $Z_i$ for $i\neq j$. 

    We place the following constraints on the random variables:
    
    \begin{enumerate}
	    \item $Z_j \sim \text{Categorical}(\Pi)$ with parameters $\Pi \coloneqq (\pi_1, \dots, \pi_N)$,
	    \item $\mathbb{E}( X_j | Z_j = i ) \sim \ \text{Normal} (v_j, \sigma_j)$ for $j\in \{ 1, \dots, N_0 \}$,
	    \item $\mathbb{E}( X_j | Z_j = i  ) = t_j v_{i_1} + (1-t_j) v_{i_2} + \varepsilon$ where $t_j \sim \text{Uniform}([0,1])$ and \\
	    $\varepsilon_j \sim \text{Normal} (0, \sigma_i)$ for $i \in \{ N_0 + 1, \dots, N \}$.
    \end{enumerate}
    
    The categorical random variables $Z_j$ represent which strata piece a random element $X_j$ belongs to. The categorical distribution is defined on $N$ many categories, with the $i^{th}$ category having a probability of $\pi_i$ of being observed. In our case, each $\pi_i$ represents approximately how many data points belong the $i^{th}$ strata piece.\\ 

    The distribution of $\mathbb{E}( X_j | Z_j = i \in \{ N_0 + 1, \dots, N \}  ) = t v_{j_1} + (1-t) v_{j_2} + \varepsilon$ is
    
    \begin{align}\label{Eq:den}
        \rho_{v_{i_1},v_{i_2}}(x ; \sigma_i) \coloneqq \frac{1}{({2 \pi \sigma_i^2})^{n/2}} \int_{0}^{1} e^{-\|x - (t v_{i_1} + (1-t) v_{i_2}) \|^2_2/2\sigma_i^2} dt,
    \end{align}
   
    where $\rho ( \ \cdot \ ; 0, \sigma_i)$ is a normal density in $n$ dimensions with zero mean and variance $\sigma_i^2$. This can be obtained through noting that if $\nu_{v_{i_1},v_{i_2}}$ is uniform measure on $ \mathcal{L}_{v_{i_1} ,v_{i_2} } \coloneqq \{y \ | \ y = t v_{i_1} + (1-t) v_{i_2}, \ t \in [0,1] \}$, then the  measure $\nu_{\sigma_i,v_{i_1},v_{i_2}} = \rho_{v_{i_1},v_{i_2}}(x ; \sigma_i) dx$ is given through
    \begin{align*}
        \nu_{\sigma_i,v_{i_1},v_{i_2}} &=  \rho ( x ; 0, \sigma_i)dx * \nu_{v_{i_1},v_{i_2}}\\
        &=\frac{1}{({2 \pi \sigma_i^2 })^{n/2}} \int_{0}^{1} e^{-\|x - (t v_{i_1} + (1-t) v_{i_2}) \|^2_2/2\sigma_i^2} dt dx,
    \end{align*}
    where $*$ represents the convolution operation over measures. 
    Through this convolution construction, we have the following proposition. 

    \begin{proposition}
	    Let $\rho_{v_{i_1},v_{i_2}}$ and $\nu_{v_{i_1},v_{i_2}}$ be as given above, then:
	    \begin{enumerate}
		    \item $\rho_{v_{i_1},v_{i_2}} \in C^{\infty} (\mathbb{R}^n)$,
		    \item $\rho_{v_{i_1},v_{i_2}} dx$ is equivalent to Lebesgue measure,
		    \item and $\nu_{\sigma_i,v_{i_1},v_{i_2}} \xrightarrow{\sigma_i \rightarrow 0 } \nu_{v_{i_1},v_{i_2}}$ weakly. 
	    \end{enumerate}
    \end{proposition}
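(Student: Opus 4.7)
The plan is to verify the three claims by leveraging the convolution structure $\nu_{\sigma_i,v_{i_1},v_{i_2}} = \rho(\,\cdot\,;0,\sigma_i)\,dx * \nu_{v_{i_1},v_{i_2}}$ and standard facts about Gaussian mollifiers. Each of the three parts is essentially a routine application of a well-known tool (differentiation under the integral sign; positivity of the Gaussian; approximate identities), so the proof proposal is mainly about packaging them correctly and invoking dominated convergence carefully.

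For part (1), I would prove smoothness of
\begin{equation*}
    \rho_{v_{i_1},v_{i_2}}(x;\sigma_i) = \frac{1}{(2\pi\sigma_i^2)^{n/2}} \int_0^1 e^{-\|x - (tv_{i_1}+(1-t)v_{i_2})\|_2^2/2\sigma_i^2}\,dt
\end{equation*}
by differentiating under the integral. For any multi-index $\alpha$, the partial derivative of the integrand with respect to $x$ is a polynomial in $x$ times the Gaussian, hence continuous in $(x,t)$. On any compact set $K \subset \mathbb{R}^n$ I would bound $|\partial_x^{\alpha} e^{-\|x - m(t)\|^2/2\sigma_i^2}|$ uniformly in $t \in [0,1]$ by an integrable (in fact bounded) function, which by the Leibniz rule for integrals lets us exchange $\partial_x^\alpha$ and $\int_0^1$. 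Iterating this gives $\rho_{v_{i_1},v_{i_2}} \in C^\infty(\mathbb{R}^n)$.

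For part (2), equivalence to Lebesgue measure means the two have identical null sets; since the Gaussian integrand is strictly positive for every $(x,t)$, the integral over $[0,1]$ is strictly positive, so $\rho_{v_{i_1},v_{i_2}}(x;\sigma_i) > 0$ for every $x \in \mathbb{R}^n$. Hence for any Borel set $B$, $\nu_{\sigma_i,v_{i_1},v_{i_2}}(B) = 0$ if and only if $B$ has Lebesgue measure zero, establishing equivalence.

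For part (3), I would use the standard approximate-identity argument: writing $\eta_{\sigma_i}(x) := \rho(x;0,\sigma_i)$, the family $\{\eta_{\sigma_i}\,dx\}_{\sigma_i>0}$ is an approximate identity in the sense that each $\eta_{\sigma_i}\,dx$ is a probability measure and $\eta_{\sigma_i}\,dx \Rightarrow \delta_0$ weakly as $\sigma_i \to 0$. For any bounded continuous $f:\mathbb{R}^n\to\mathbb{R}$, by Fubini
\begin{equation*}
    \int f\,d\nu_{\sigma_i,v_{i_1},v_{i_2}} = \int_0^1 \int_{\mathbb{R}^n} f(y+m(t))\,\eta_{\sigma_i}(y)\,dy\,dt,
\end{equation*}
where $m(t) = tv_{i_1}+(1-t)v_{i_2}$. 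The inner integral converges pointwise in $t$ to $f(m(t))$ as $\sigma_i \to 0$ by continuity of $f$ and the approximate-identity property, and is dominated by $\|f\|_\infty$, so dominated convergence gives convergence to $\int_0^1 f(m(t))\,dt = \int f\,d\nu_{v_{i_1},v_{i_2}}$, which is weak convergence as claimed. The only mild subtlety — and the one I would be careful with — is justifying Fubini and the dominated convergence step cleanly; both go through immediately since $f$ is bounded and each relevant integrand is jointly measurable.
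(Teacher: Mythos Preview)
Your proposal is correct and follows essentially the same approach as the paper. The paper's own proof is just a two-sentence sketch---it asserts that parts (1) and (2) follow directly from the integral formula for $\rho_{v_{i_1},v_{i_2}}$ and that part (3) is a standard mollifier-approximation result (citing Stein--Shakarchi)---so you have simply supplied the routine details (differentiation under the integral, strict positivity of the Gaussian integrand, and the approximate-identity/dominated-convergence argument) that the paper leaves implicit.
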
 
    
    The first two claims follow from Equation \ref{Eq:den}. The third is a result from mollifier approximation theory, see \cite{stein2009real} for details. 

    \begin{corollary}\label{cor:allgood}
	    Define $\sigma \coloneqq \max_{i} \sigma_i$ and let $\nu_{\sigma} \coloneqq \mu( X_j^{-1} )$ be the push-forward measure of $\mu$ through $X_j$, then 
	    \begin{enumerate}
		    \item $\nu_{\sigma} \sim dx$,
		    \item $ \text{supp}(\lim_{\sigma \rightarrow 0} \nu_{\sigma}) = |G|$,
		    \item $\lim_{\sigma \rightarrow 0 }\nu_{\sigma} (|G|) = 1$,
	    \end{enumerate}
	    where $|G|$ is the embedded graph in $\mathbb{R}^n$.
    \end{corollary}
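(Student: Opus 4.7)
The plan is to exhibit $\nu_\sigma$ explicitly as a finite convex combination of the component measures arising from the mixture, and then push each claim through componentwise using the preceding proposition together with standard facts about the normal distribution and weak limits.

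First I would write the density of $\nu_\sigma$ with respect to Lebesgue measure as
\begin{equation*}
    f_\sigma(x) = \sum_{i=1}^{N_0} \pi_i\, \rho(x; v_i, \sigma_i) + \sum_{i=N_0+1}^{N} \pi_i\, \rho_{v_{i_1}, v_{i_2}}(x; \sigma_i),
\end{equation*}
which follows from the law of total probability and the distributional assumptions on $Z_j$ and $X_j \mid Z_j$. For claim (1), each Gaussian term and each convolved term is a strictly positive smooth density on $\mathbb{R}^n$ (the latter by the preceding proposition), and the weights $\pi_i$ are nonnegative with $\sum_i \pi_i = 1$. Assuming each $\pi_i > 0$ (otherwise the corresponding stratum plays no role and one restricts to the effective index set), the sum $f_\sigma$ is strictly positive everywhere, so $\nu_\sigma = f_\sigma\, dx$ is equivalent to Lebesgue measure.

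For claim (2), I would argue that weak convergence is preserved by finite convex combinations: if $\mu_i^{\sigma_i} \Rightarrow \mu_i^0$ as $\sigma_i \to 0$, then $\sum_i \pi_i \mu_i^{\sigma_i} \Rightarrow \sum_i \pi_i \mu_i^0$. Apply this in two cases. For $i \in \{1,\dots,N_0\}$, the Gaussian $\rho(\cdot; v_i, \sigma_i)\, dx$ converges weakly to the Dirac mass $\delta_{v_i}$ as $\sigma_i \to 0$ (a standard mollifier fact). For $i \in \{N_0+1,\dots,N\}$, the preceding proposition gives $\nu_{\sigma_i, v_{i_1}, v_{i_2}} \Rightarrow \nu_{v_{i_1},v_{i_2}}$, the uniform measure on the segment $\mathcal{L}_{v_{i_1},v_{i_2}}$. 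Letting $\sigma \to 0$ forces every $\sigma_i \to 0$, so
\begin{equation*}
    \lim_{\sigma \to 0} \nu_\sigma \;=\; \sum_{i=1}^{N_0} \pi_i\, \delta_{v_i} \;+\; \sum_{i=N_0+1}^{N} \pi_i\, \nu_{v_{i_1}, v_{i_2}}.
\end{equation*}
The support of a finite positive combination of measures is the union of the individual supports, which is $V \cup \bigcup_{e \in E} \mathcal{L}_e = |G|$, establishing claim (2).

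Claim (3) is then immediate from the explicit form of the limit measure: it is a probability measure (the $\pi_i$ sum to one and each component is a probability measure) whose support lies in $|G|$, so it assigns mass $1$ to $|G|$. The main subtlety worth flagging is that $\nu_\sigma(|G|) = 0$ for every $\sigma > 0$ by claim (1) (since $|G|$ has Lebesgue measure zero in $\mathbb{R}^n$ for $n \geq 2$); thus the statement of (3) must be read as $(\lim_{\sigma \to 0} \nu_\sigma)(|G|) = 1$ rather than as an interchange of limit and evaluation. This is the only conceptual hurdle; once the weak limit in (2) is identified explicitly, (3) is a direct consequence, and the bulk of the work is the componentwise application of the preceding proposition.
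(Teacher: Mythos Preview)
Your proof is correct and follows essentially the same approach as the paper: both write $\nu_\sigma$ explicitly as the finite mixture of Gaussian and convolved-segment components and then invoke the preceding proposition (together with the standard fact that Gaussians converge weakly to Dirac masses) componentwise. The only cosmetic difference is in the endgame for (2)--(3): the paper bounds $\nu_\sigma(\mathbb{R}^n\setminus |G|_r)$ for a tubular neighborhood $|G|_r$ via a single convolution with the maximal $\sigma$, whereas you identify the weak limit measure explicitly and read off its support---but these are equivalent ways of packaging the same mollifier argument, and your remark on the correct interpretation of (3) is a useful clarification the paper leaves implicit.
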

    
    \begin{proof}
    	Write $\nu_{\sigma}$ through
    	
	    \begin{align*}
	        \nu_{\sigma} &= \sum_{i=1}^{N_0} \pi_i \rho (x; v_i, \sigma_i) dx + \sum_{i=N_0 + 1}^{N} \pi_i \rho_{v_{j_1},v_{j_2}}(x ; \sigma_i) dx\\
        	&= \sum_{i=1}^{N_0} \pi_i ( \delta_{v_i} * \rho ( x ; 0, \sigma_i) dx) + \sum_{i=N_0 + 1}^{N} \pi_i (\nu_{v_{i_1},v_{i_2}} * \rho ( x ; 0, \sigma_i)dx )
	    \end{align*}
	    
	    where $\delta_{v_i}$ is the normalised measure: $\delta_{v_i}(U) = 1$ if $v_i \in U$ and zero otherwise. Let $|G|$ be the embedded graph and define $|G|_r \coloneqq \{ x \ | \ x \in B_r(y), \ y \in |G| \}$, then
	    
	    \begin{align*}
	        \nu_{\sigma} (\mathbb{R}^n \setminus |G|_r )	&\leq \int_{\mathbb{R}^n \setminus |G|_r }\left( \sum_{i=1}^{N_0} \pi_i \delta_{v_i} + \sum_{i=N_0 + 1}^{N} \pi_i \nu_{v_{i_1},v_{i_2}} \right) * \rho ( x ; 0, \sigma)dx\\
	        &\xrightarrow{\sigma \rightarrow 0 } 0 \quad \text{for all $r>0$.}
	    \end{align*}
    \end{proof}

    Corollary \ref{cor:allgood} shows the push-forward measure $\nu$ has our desired properties for modelling an embedded graph $|G|$.

\subsection{Parameter re-Estimation}

    We now form an Expectation Maximisation (EM) algorithm to find Maximum Likelihood Estimates (MLEs) for the embedded graph's vertex locations. Let $\widetilde{\mathbb{P}}(\Omega)$ be the space of probability measures over $\Omega$. We are interested in reconstructing the measure $\mu$ given evaluations of $X_j$ and $Z_j$ for every $j\in \{1,\dots,n\}$. This forms the following likelihood optimisation problem:
    
    \begin{equation*}
        \mu^* \coloneqq \text{argsup}_{\eta \in \widetilde{\mathbb{P}}(\Omega)} \eta \left( \bigcap_{j \in \{1, \dots, |P| \} } X_j^{-1} (B_h(x_j)) \cap Z^{-1}_j (i) \right)
    \end{equation*}
    
    for some small $h >0 $. For a single recorded datum:
    
    \begin{align*}
        &\eta ( X_j^{-1} (B_h(x_j)) \cap Z^{-1}_j (i) ) =\mathbb{P}( X_j \in B_h(x_j) \ | \ Z_j = i ) \mathbb{P}(Z_j = i)\\
        &= \prod_{i=1}^{N_0} \left( \pi_i \int_{B_h(x_j)}  \rho(x ; v_i, \sigma_i )dx \right)^{1_{Z_j= i }}  \prod_{i=N_0+1}^{N}\left( \pi_i \int_{B_h(x_j)} \rho_{v_{j_1},v_{j_2}}(x; \sigma_i)dx \right)^{1_{Z_j = i}}.
    \end{align*}

    Intersecting over all such data points, taking a logarithm, and evaluating the limit as $h \rightarrow 0$ for the argument supremum yields the equivalent optimisation:

    \begin{align}\label{Eq:opt}
         \text{argsup}_{\pi_i \in [0,1], \  v_i \in \mathbb{R}^n} \sum_{j=1}^{|P|}  \hspace{-0.2em} &\Big(  \sum_{i=1}^{N_0}  1_{Z_j=i} (\log( \rho (x_j; v_i, \sigma_i) ) + \log(\pi_i))+ \\
        &\sum_{i=N_0+1}^{N}  1_{Z_j = i} (\log( \rho_{v_{i_1},v_{i_2}}(x_j;\sigma_i) ) + \log(\pi_i)) \Big). \nonumber
    \end{align}

    We cannot observe accurately $Z_j$ for a recorded datum, although the work in estimating the abstract graph structure gives an initial estimate for this value. To dynamically update the prediction of this value, we will utilise an EM-algorithm. Projection to the sub-$\sigma$-algebra $\sigma (X_1, \dots, X_n)$ and making the assumption $Z_j \perp X_{\widetilde{j}}$ for $\widetilde{j} \neq j$ gives the following log-likelihood function, which we aim to maximise: 
    
    \begin{align}\label{Eq:cost}
        \mathcal{L}(V, \Pi; \sigma) &\coloneqq\\ \nonumber
        \frac{1}{|P|}\sum_{j=1}^{|P|}  \Big(  &\sum_{i=1}^{N_0} \mathbb{E}( 1_{Z_j = i} | X_j \in B_{h'}(x_j) ) (\log( \rho (x_j; v_i, \sigma_i) ) + \log(\pi_i))+  \\
        &\sum_{i=N_0+1}^{N} \mathbb{E}( 1_{Z_j = i} | X_j \in B_{h'}(x_j) ) (\log( \rho_{v_{i_1},v_{i_2}}(x_j;\sigma_i) ) + \log(\pi_i)) \Big), \nonumber
    \end{align}
    
    where we currently view $\mathcal{L}$ as a function of the vertex locations $V$ and assignment weights $\Pi$, with $\sigma$ being a fixed value. Let the densities for each $k \in \{1,\dots , N\}$ strata be enumerated as $\{ \rho_k \}_{k=1}^N$. The individual terms of the cost function are 

    \begin{align}\label{Eq:A}
        \lim_{h' \rightarrow 0}\mathbb{E}( 1_{Z_j = i}  | X_j \in B_{h'}(x_j))  
        &=  \frac{\pi_i \rho_i(x_j) }{\sum_{k=1}^{N} \pi_k \rho_k(x_j) }  \\
        \log( \rho(x; v_i , \sigma_i ) ) &= - \frac{d}{2}\log (2 \pi \sigma_i) -\|x - v_i \|^2/2\sigma_i. \nonumber
    \end{align}

    \vspace{-1em}
    \begin{align*}
        \log( \rho_{v_{i_1},v_{i_2}}(x;\sigma_i) ) &= 
        \log \Big(\text{erf}\left(\frac{ \langle v_{i_1} - v_{i_2}, v_{i_1} + v_{i_2} - 2x \rangle+\|v_{i_1}- v_{i_2}\|_2^2}{2 \sqrt{2} \|v_{i_1} - v_{i_2}\|_2 \sigma_i}\right)\\
        &\ -\text{erf}\left(\frac{ \langle v_{i_1} - v_{i_2}, v_{i_1} + v_{i_2} - 2x \rangle-\|v_{i_1} - v_{i_2}\|_2^2}{2 \sqrt{2} \|v_{i_1} - v_{i_2}\|_2 \sigma_i}\right)\Big) \\
        &\ + \frac{ \langle v_{i_1} - v_{i_2}, v_{i_1} + v_{i_2} - 2x \rangle^2-4 \|v_{i_1} - v_{i_2}\|_2^2 \|(v_{i_1} + v_{i_2})/2 - x\|_2^2}{8 \|v_{i_1} - v_{i_2}\|_2^2 \sigma_i^2}\\
        &\ -\log (\|v_{i_1} - v_{i_2}\|_2)+\log \left(2^{\frac{1}{2} (-d-1)} \pi ^{\frac{1}{2}-\frac{d}{2}} \sigma_i^{1-d}\right) .
    \end{align*}

    Above, $\text{erf} : \mathbb{R} \mapsto \mathbb{R}$ is the standard error function given through $\text{erf}(x) = \frac{2}{\sqrt{\pi}} \int_{0}^{x} \exp(-t^2) dt$. In \href{http://github.com/yossibokor/Skyler.jl}{Skyler}, the analytic gradients of the log-likelhood function $\mathcal{L}$ are given. Gradient clipping is used to bound our computations within machine accuracy for when $\sigma_i$ or the evaluation of $x \mapsto \rho_{v_{i_1},v_{i_2}}(x;\sigma_i)$ is close to machine precision. Our log-likelhood function is often not concave, for instance the function $(v_{i_1}, v_{i_2} ) \mapsto  \rho_{v_{i_1},v_{i_2}}(x;\sigma_i) $ obeys $\rho_{v_{i_1},v_{i_2}}(x;\sigma_i) = \rho_{v_{i_2},v_{i_1} }(x;\sigma_i) $. It is necessary to have a good initialisation for the embedded graph modelling to find an acceptable local optimum value for vertex prediction. In our computations, we have found that the initial vertex modelling given by the abstract graph structure yields vertex predictions with an error less than the noise of the data, correcting the issue observed in \cite{modsim2019}. We can complete Algorithm \ref{alg:vertex} by noting that if $A_{i,j} \coloneqq \lim_{h' \rightarrow 0}\mathbb{E}( 1_{Z_j = i}  | X_j \in B_{h'}(x_j))$, then the function $\Pi \rightarrow \mathcal{L}(V,\Pi; \sigma)$ is concave and has a unique maximum value at $\pi_i^* = \frac{\sum_{j} A_{i,j}}{\sum_{i,j}A_{i,j}  }$. It can be seen that our model is a higher-dimension version of Gaussian clustering as Algorithm \ref{alg:vertex} degenerates to this when $N = N_0$.\\

    Fixing a noise tolerance $\sigma$ and solving the optimisation in Equation \ref{Eq:opt} by minimising the function $(V, \Pi) \rightarrow \mathcal{L}(V,\sigma, \Pi)$ through an EM-algorithm \cite{dempster1977maximum} gives Algorithm \ref{alg:vertex}.

\subsection{Numerical Simulations}

    The conditions in Assumption \ref{as:embed} are not the sharpest bounds, and other ratios of $R$ and $\varepsilon$ can also detect the correct graph structure. We present the results of a few different ratios, for the same $0.1$-sample $P$ (Figure \ref{fig:sample}) of the embedded graph $(G, \phi_G) \subset \mathbb{R}^3$ (Figure \ref{fig:graph}).  There are $705$ samples in $P$, and $G$ has 5 vertices embedded as 1: $(0,0,0)$, 2: $(4.6, 6.24, 0)$ 3: $(4.86, 0.51, 3.47)$, 4: $(-1.32, 6.29, 4)$, and 5: $(-4.23, -3.48, -3)$, and edges
        
        \begin{equation*}
            E = \{ (1,5), (1,3), (1,4), (2,4), (2,3)\}.
        \end{equation*}
        
    The following table describes the results with varying choices of ratio $\frac{R}{\varepsilon}.$

    \begin{table}[]
        \begin{tabular}{cccccccc}
            Ratio  & Correct & Log Likelihood & $v_1$ & $v_2$& $v_3$ & $v_4$ & $v_5$   \\ 
            $R/\varepsilon$ & structure & (Equation \ref{Eq:cost}) & $\begin{pmatrix} 0\\0\\0\end{pmatrix}$ & $\begin{pmatrix}4.6\\ 6.24\\ 0\end{pmatrix}$ & $\begin{pmatrix}4.86\\ 0.51\\ 3.47\end{pmatrix}$ & $\begin{pmatrix} -1.32\\ 6.29\\ 4\end{pmatrix}$ &  $\begin{pmatrix}-4.23\\ -3.48\\ -3\end{pmatrix}$\\ \hline
            $4$     & No    & - & - & - & - & - & -     \\ \hline
            $6$     & Yes   & $-33.183$  & $\begin{pmatrix} 0.00 \\ 0.03 \\ 0.01\end{pmatrix}$ & $\begin{pmatrix} 4.59 \\ 6.23 \\ -0.02 \end{pmatrix}$ & $\begin{pmatrix} 4.56 \\ 0.56 \\ 3.43 \end{pmatrix}$ & $\begin{pmatrix} -1.30 \\ 6.26 \\ 3.96\end{pmatrix}$ & $\begin{pmatrix} -4.24 \\ -3.46 \\ -3.02 \end{pmatrix}$      \\ \hline
            $8$     & Yes   & $-32.97$ & $\begin{pmatrix} 0.00 \\ 0.02 \\ 0.01 \end{pmatrix}$ & $\begin{pmatrix} 4.59 \\ 6.23 \\ -.02 \end{pmatrix}$ & $\begin{pmatrix} 4.86 \\ 0.56 \\ 3.42\end{pmatrix}$ & $\begin{pmatrix} -1.29 \\ 6.26 \\ 3.96\end{pmatrix}$ & $\begin{pmatrix} -4.22\\ -3.45 \\ -3.01\end{pmatrix}$\\ \hline
            $10$    & Yes   & $-33.33$ & $\begin{pmatrix}0.01 \\ 0.03 \\ 0.01 \end{pmatrix}$ &$\begin{pmatrix}4.59 \\6.22\\-0.02 \end{pmatrix}$ & $\begin{pmatrix} 4.86\\0.56\\3.42\end{pmatrix}$& $\begin{pmatrix} -1.26\\6.24\\3.95\end{pmatrix}$&$\begin{pmatrix} -4.19\\3.42\\-2.99\end{pmatrix}$\\ \hline
            $12$    & Yes   & $-33.84$ &$\begin{pmatrix} 0.01\\0.03\\0.01\end{pmatrix}$ &$\begin{pmatrix} 4.59\\6.23\\-0.02\end{pmatrix}$ & $\begin{pmatrix}4.86\\0.56\\3.42 \end{pmatrix}$ &$\begin{pmatrix} -1.26\\6.24 \\3.95 \end{pmatrix}$ &$\begin{pmatrix} -4.14\\-3.38\\-2.96 \end{pmatrix}$  \\ \hline
            $14$    & Yes   & $-36.61$ & $\begin{pmatrix} 0.01\\0.03\\0.01\end{pmatrix}$& $\begin{pmatrix} 4.59\\6.26\\-0.03\end{pmatrix}$ &$\begin{pmatrix}4.86\\0.56\\3.43 \end{pmatrix}$ & $\begin{pmatrix}-1.26\\6.23\\3.95 \end{pmatrix}$& $\begin{pmatrix}-4.00\\-3.27\\-2.56 \end{pmatrix}$\\ \hline
            $16$    & Yes   & $-45.30$ &$\begin{pmatrix} 0.02\\0.03\\0.01 \end{pmatrix}$ &$\begin{pmatrix} 4.58\\6.27\\-0.05 \end{pmatrix}$ & $\begin{pmatrix}4.56\\0.56\\3.43 \end{pmatrix}$ & $\begin{pmatrix} -0.70\\ 3.70\\ 2.33 \end{pmatrix}$& $\begin{pmatrix}-3.96\\-3.22\\-2.81 \end{pmatrix}$\\ \hline
        \end{tabular}
        \caption{Summary of output of the algorithm for various ratios $\frac{R}{\varepsilon}$. Recall we wish to maximise Equation \ref{Eq:cost}. The last 5 columns are the vertex locations obtained.}\label{tab:sims}

    \end{table}
    
    \begin{figure}[h]
    	\centering
        \begin{subfigure}[b]{0.49\textwidth}
	        \includegraphics[width=\textwidth]{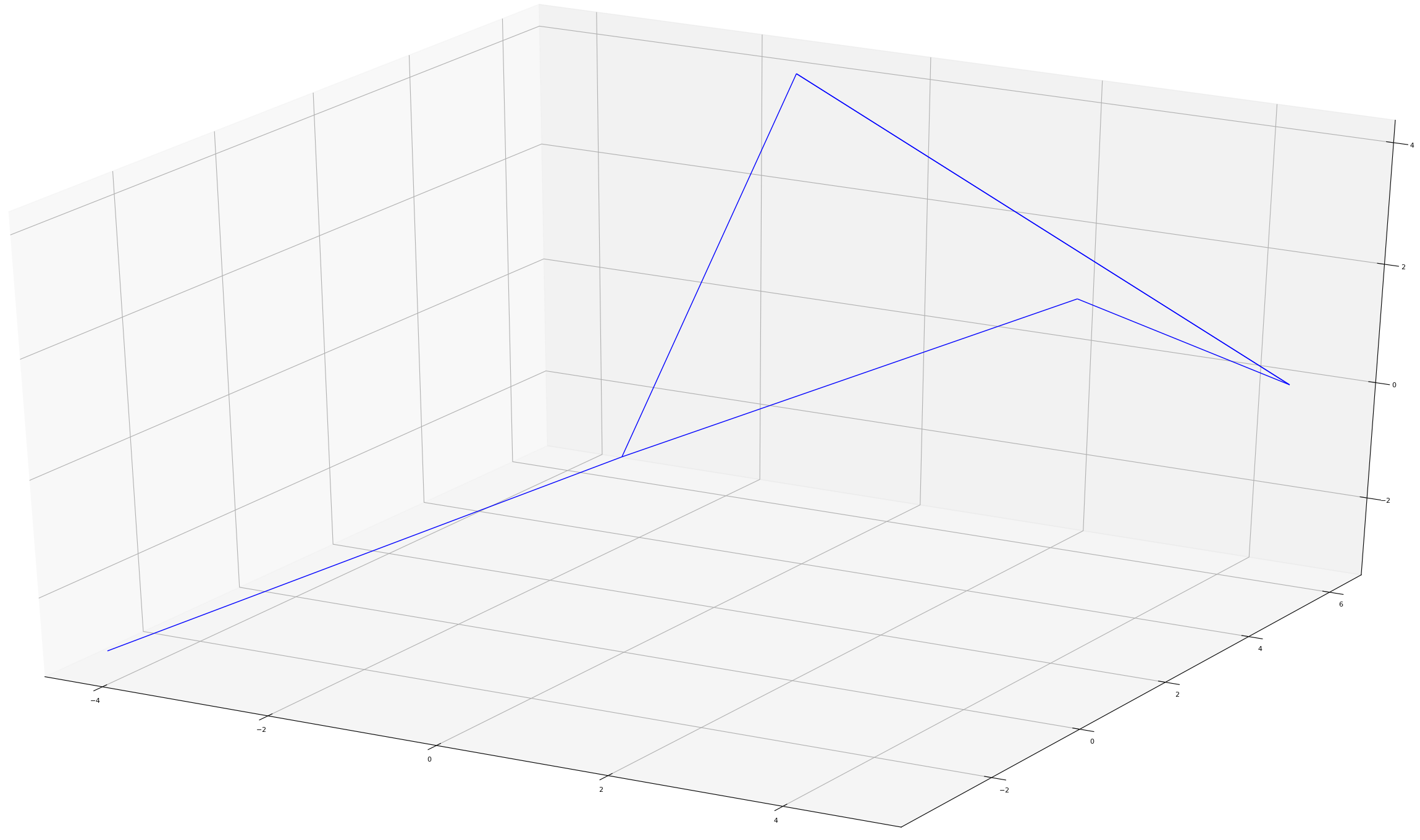}
	        \caption{Embedded graph $|G|$.}\label{fig:graph}
	    \end{subfigure}
	    \hfill 
        \begin{subfigure}[b]{0.49\textwidth}
    	    \includegraphics[width=\textwidth]{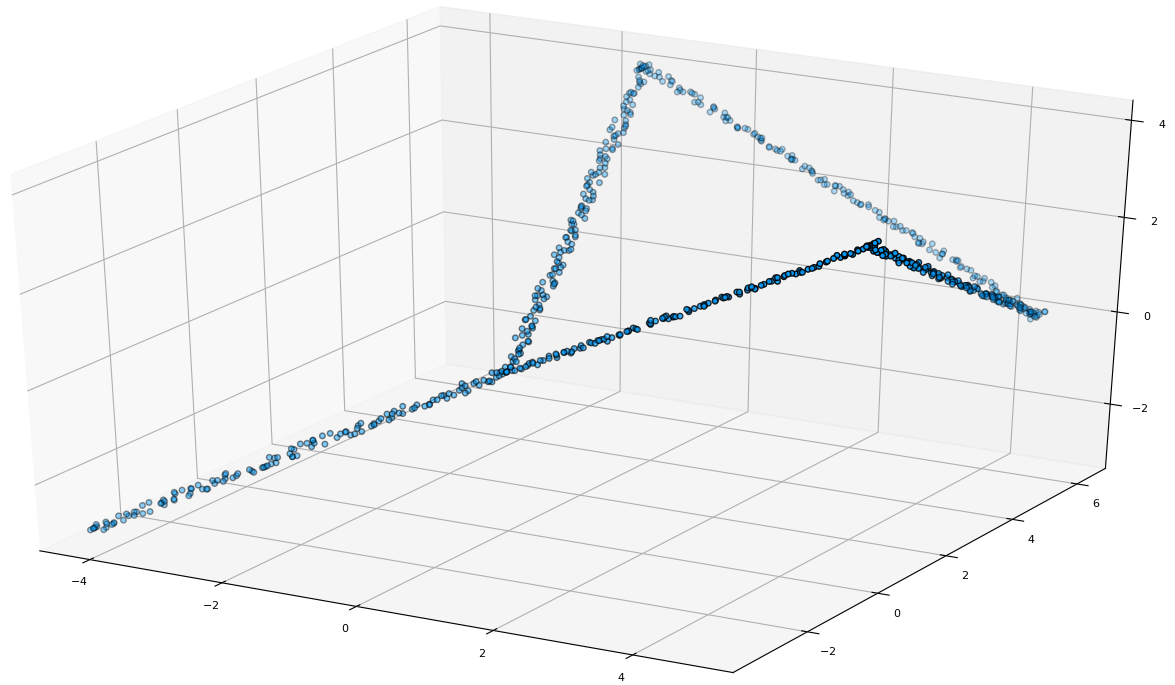}
    	    \caption{$\varepsilon$-sample $P$.}\label{fig:sample}
        \end{subfigure}
        \\
        \begin{subfigure}[b]{0.49\textwidth}
	        \includegraphics[width=\textwidth]{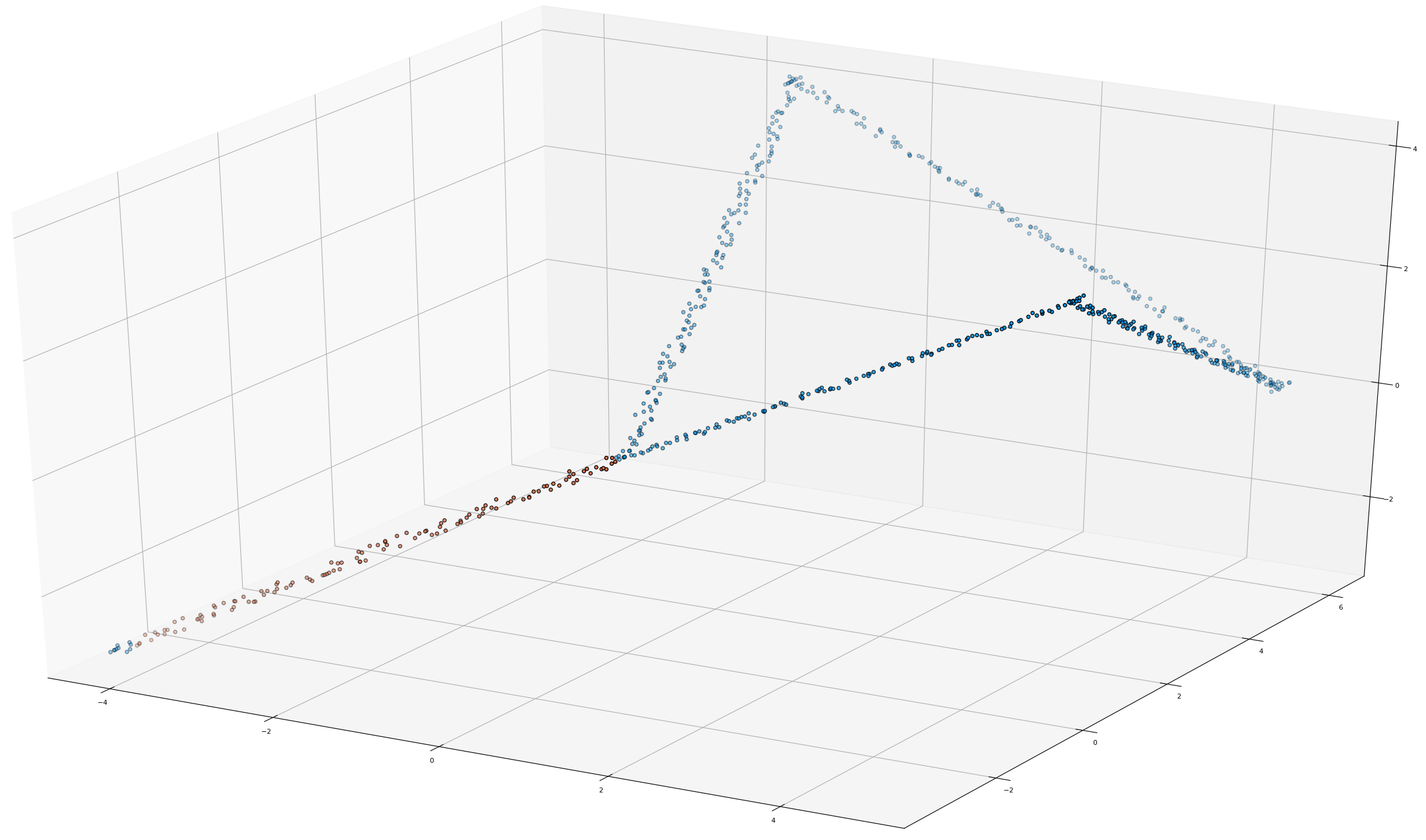}
	        \caption{$\frac{R}{\varepsilon}=4$: 2 vertex and 1 edge cluster.}\label{fig:par4}
        \end{subfigure}
        \begin{subfigure}[b]{0.49\textwidth}
	        \includegraphics[width=\textwidth]{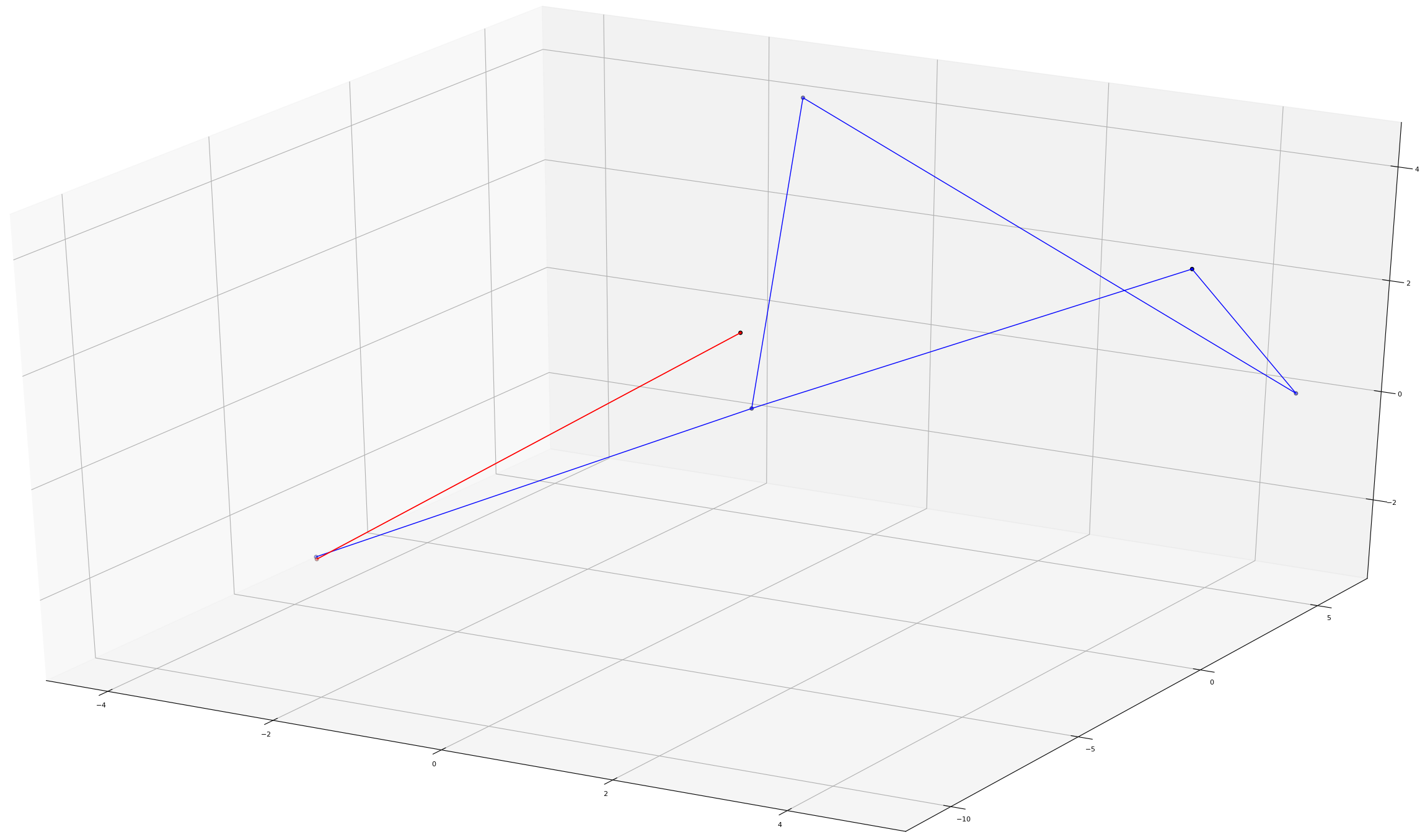}
    	    \caption{Model using $\frac{R}{\varepsilon}=4$ in red.}\label{fig:mod4}
        \end{subfigure}
        \\
        \begin{subfigure}[b]{0.49\textwidth}
    	    \includegraphics[width=\textwidth]{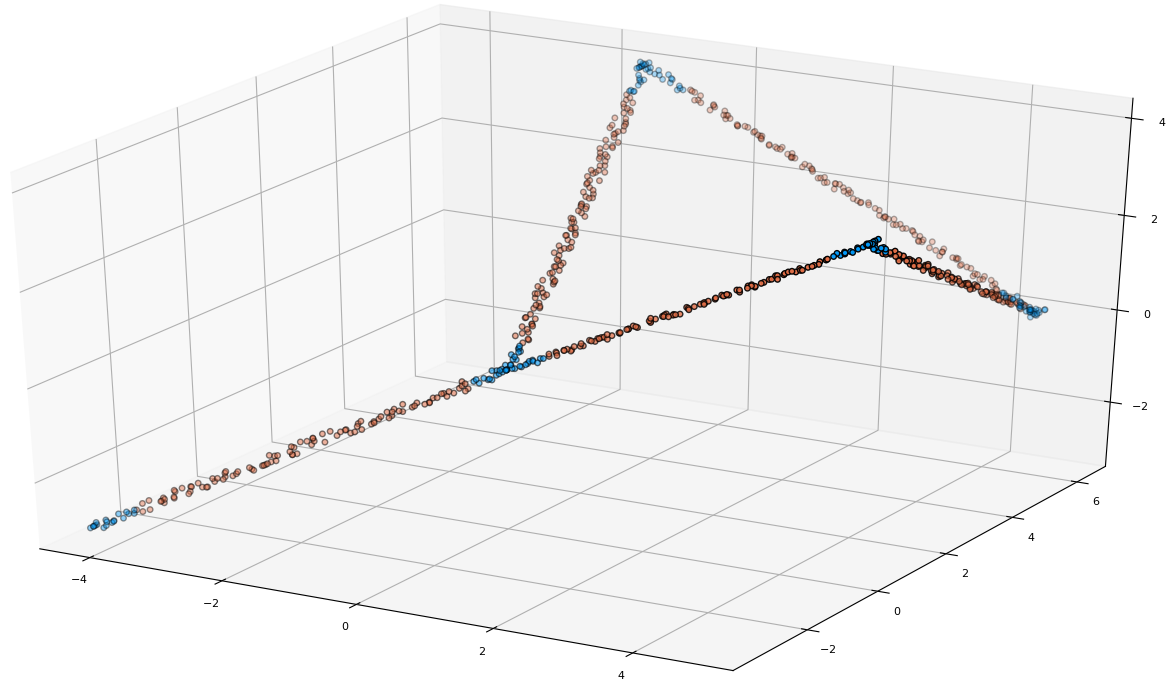}
    	    \caption{$\frac{R}{\varepsilon}=8$: 5 vertex and 5 edge clusters.}\label{fig:par8}
        \end{subfigure}
        \hfill
        \begin{subfigure}[b]{0.49\textwidth}
    	    \includegraphics[width=\textwidth]{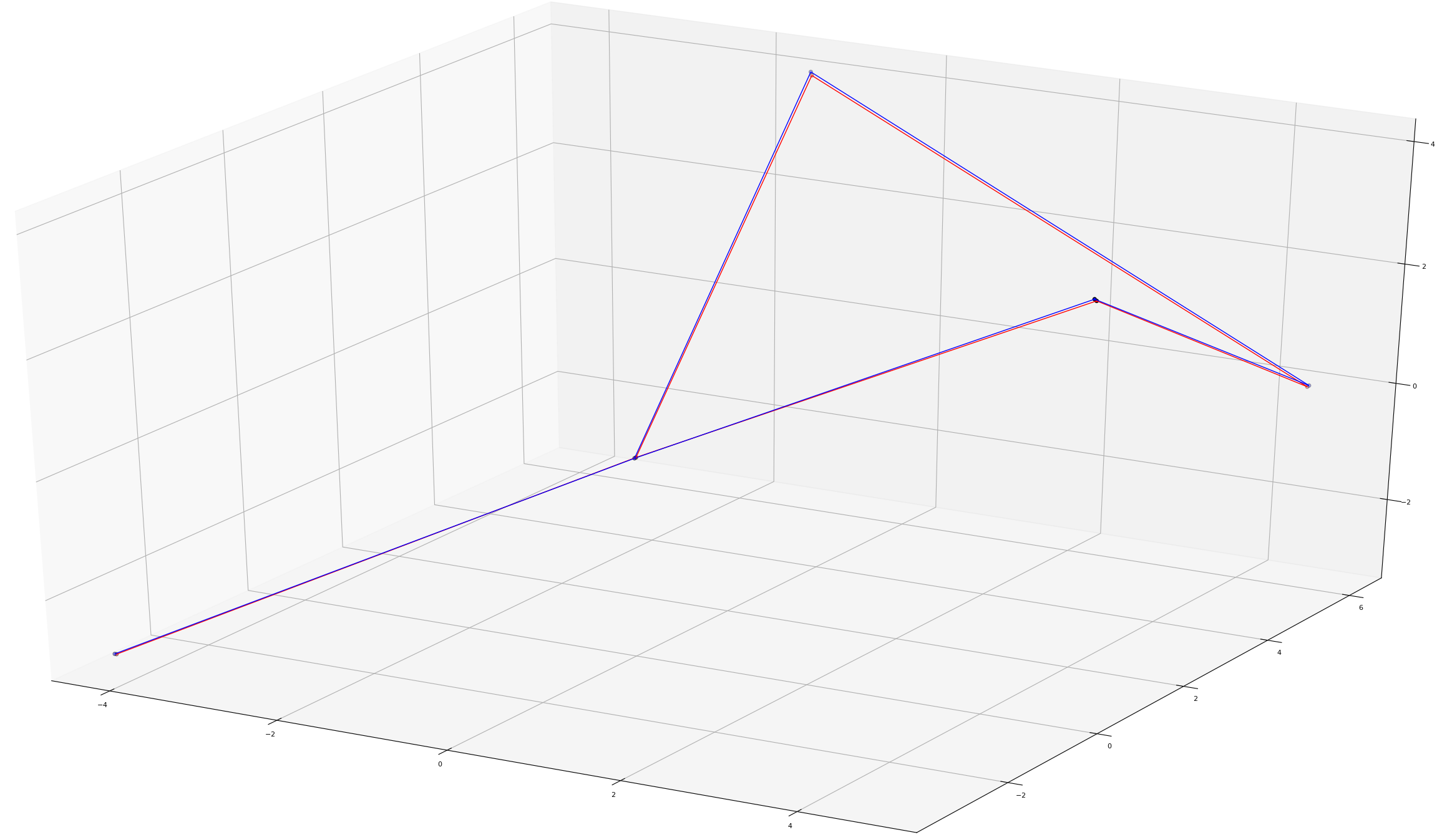}
    	    \caption{Model using $\frac{R}{\varepsilon}=8$ in red.}\label{fig:mod8}
        \end{subfigure}
        \\
        \begin{subfigure}[b]{0.49\textwidth}
    	    \includegraphics[width=\textwidth]{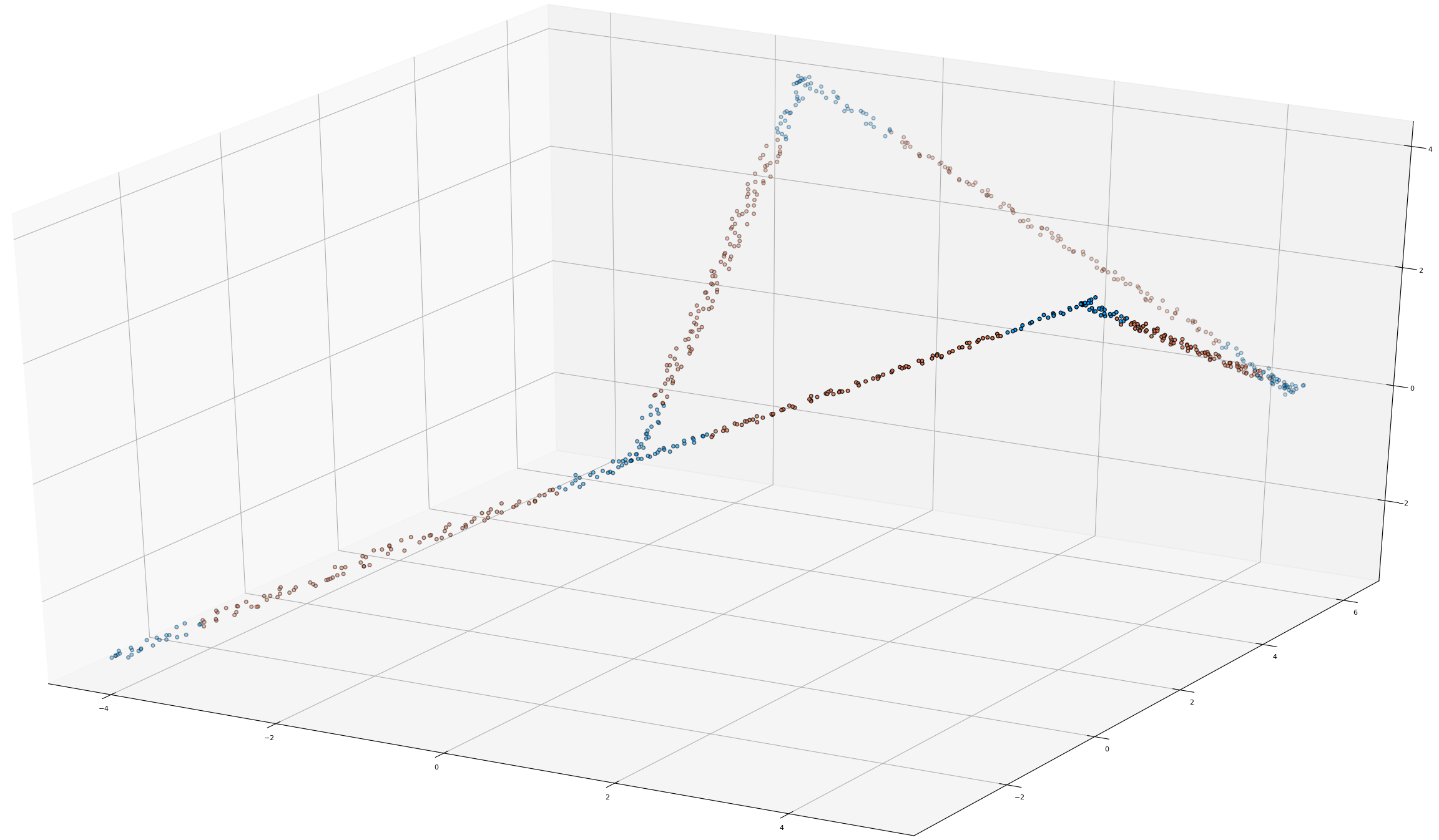}
    	    \caption{$\frac{R}{\varepsilon}=12$: 5 vertex and 5 edge clusters.}\label{fig:par12}
        \end{subfigure}
        \hfill
        \begin{subfigure}[b]{0.49\textwidth}
    	    \includegraphics[width=\textwidth]{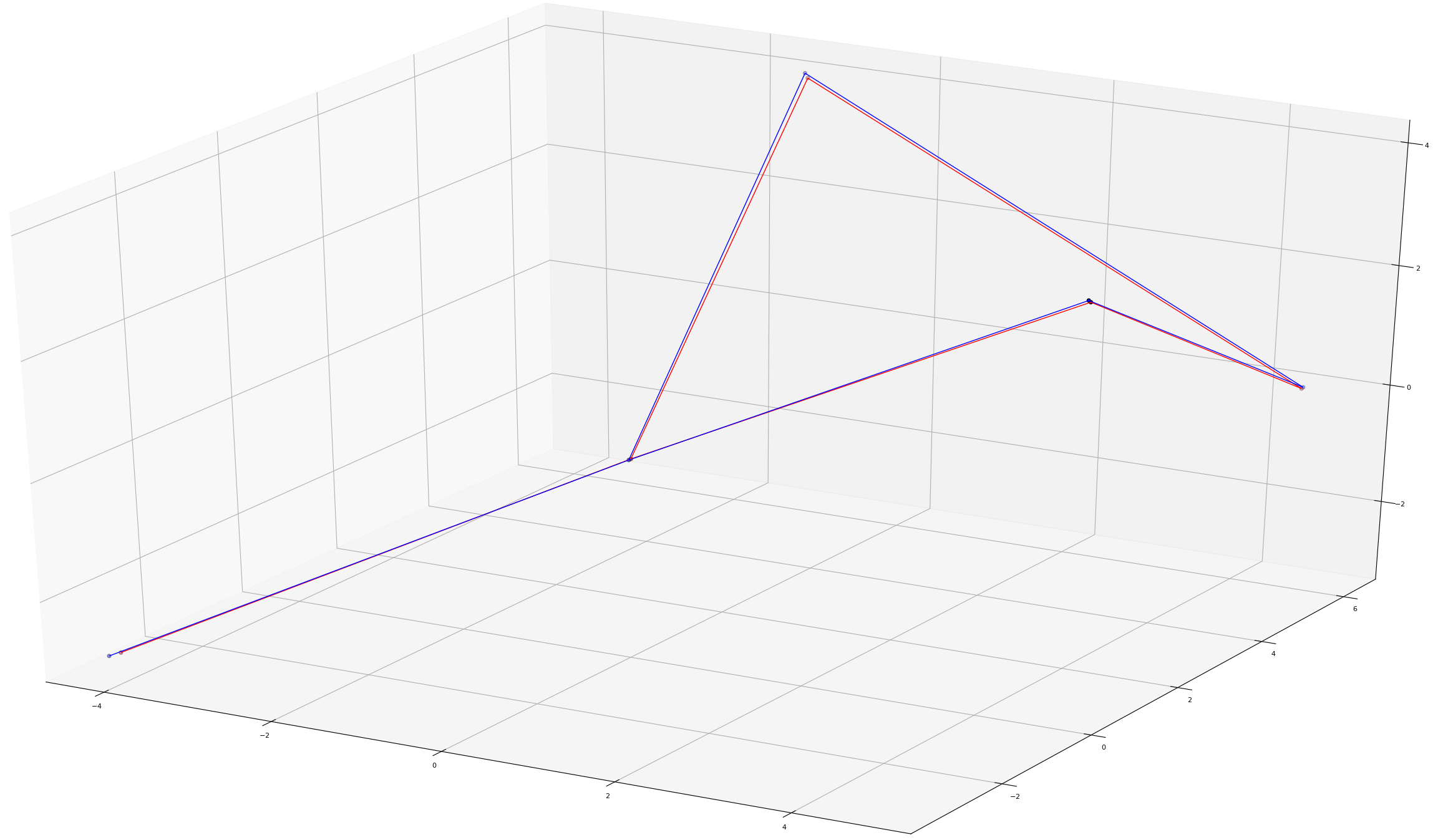}
    	    \caption{ Model using $\frac{R}{\varepsilon}=12$ in red.}\label{fig:mod12}
        \end{subfigure}
        \caption{}\label{fig:collected}    
    \end{figure}
    
    Comparing the log likelihood of the models obtained using $\frac{R}{\varepsilon}=8$ ($-2.3712314714356437$) and $\frac{R}{\varepsilon}=12$ ($-2.783827546761547$), it is clear that while we have shown that $R \geq 12 \varepsilon$ is sufficient to prove correctness of the algorithm, smaller ratios can also identify an isomorphic graph structure, and result in a  higher log likelihood model. In practice, this suggest that we can improve the process by first using $R \geq 12 \varepsilon$ to obtain the correct structure, and then decreasing the ratio to model the graph, stopping when we still obtain the correct graph structure and maximise the log likelihood. 

\section{Future Directions}\label{sec:future}

    The algorithm presented in this paper focuses on recovering and modelling an embedded graph $(G, \phi_G)$ given an $\varepsilon$-sample $P$. Stratified spaces, however, are not restricted to consisting of $0$- and $1$-dimensional pieces, nor are they restricted to being simplicial complexes. We can consider emebeddings of CW complexes, where a stratum is embedded as a semi-algebraic set. 
   
    Unfortunately, the algorithm in this paper does not naively extend to higher simplicial complexes or CW complexes. In particular, to recover embedded CW complexes, we need to remove the assumption that strata are embedded as convex hulls (linearity). Hence, there are two distinct paths forward:
    
    \begin{enumerate}
   		 \item develop an algorithm which identifies the abstract structure of simplicial complexes with $2$-dimensional simplices,
   		 \item explore methods for removing the linearity assumption (even for graphs).
  	\end{enumerate}

    Focusing on increasing the dimension of the cells in the simplicial complex, the next step is to allow 2-simplicies and partition an $\varepsilon$-sample $P$ into three parts $P_0, P_1,$ and $P_2$. One approach is a peeling argument: first we determine the points in $P_2$, and then apply the current algorithm to $P\setminus P_2$ to obtain $P_1$ and $P_0$. Complications with this include ensuring that points are not over-assigned to $P\setminus P_2$, as this can result in $P\setminus P_2$ not being suitable as input for the current algorithm. To appropriatly partition $P$, we hope to exploit the relationship between $(R, \varepsilon)$-local structure and local homology. For graphs, we saw that the dimension $1$ local homology at a point $x$ contains topological information, which corresponds to the number of points in the intersection of the $|G|$ with a ball of small radius $r$ around $x$, and if there are 2 points, their relative geometry providing more information. By generalising the $(R,\varepsilon)$-local structure appropriately, we hope to see a correspondence with information contained in higher homology groups, and augment this with other geometrical information. 
    
    To remove the linearity assumption, we need to address a long standing problem in computational algebraic geometry: learning algebraic varieties from noisy samples. In \cite{learnvars}, Breiding et. al develop an algorithm which is robust to machine error, but not sampling noise. The algorithm has also been found to fail when given large data sets sampled from simple varieties. These issues need to be overcome before we can remove the linearity assumption.

\section*{Acknowledgments} 
    The first and third authors are supported by an Australian Government Research Training Program (RTP) Fee Offset Scholarship. The authors would also like to thank Jane Tan and Martin Helmer for detailed comments on early drafts, and Cale Rankin and Ivo Vekemans for insightful discussions.


\medskip
Received xxxx 20xx; revised xxxx 20xx.
\medskip

\end{document}